\documentclass[a4paper, 12pt]{article}
\usepackage [a4paper,left=2.5cm,bottom=2.5cm,right=2.5cm,top=2.5cm]{geometry}
\usepackage[english]{babel}
\usepackage{amssymb}
\usepackage{amsmath,amsthm, dsfont}
\usepackage{subcaption}
\usepackage{comment}
\usepackage{amsmath}
\usepackage{tabularx,array}
\usepackage{graphicx, url}
\usepackage{enumitem} 
\usepackage{bm}
\usepackage{float}
\usepackage{bbm}
\usepackage{xcolor}
\usepackage{hyperref}
\usepackage{graphicx}
\usepackage{todonotes}
\usepackage{todonotes}
\usepackage{hyperref}
\hypersetup{colorlinks=true, linkcolor=blue,citecolor=gray}
\usepackage[nameinlink,capitalise]{cleveref}
\crefformat{equation}{#2(#1)#3}

\usepackage{color,soul}

\theoremstyle{plain}
\theoremstyle{plain}\newtheorem{assumption}{Assumption}
\crefname{assumption}{Assumption}{Assumptions}
\newtheorem{theorem}{Theorem}[section]
\newtheorem{lemma}[theorem]{Lemma}
\newtheorem{corollary}[theorem]{Corollary}
\newtheorem{proposition}[theorem]{Proposition}

\newtheorem{remark}[theorem]{Remark}

\newcommand{\argmin}[1]{\underset{#1}
{\operatorname{arg}\!\operatorname{min}}\;}
\newcommand{\one}{\mathds{1}}

\newcommand{\E}{\mathbb{E}}

\newcommand{\R}{\mathbb{R}}
\newcommand{\Z}{\mathbb{Z}}
\newcommand{\C}{\mathbb{C}}
\newcommand{\N}{\mathbb{N}}

\renewcommand{\P}{\mathbb{P}}

\newcommand{\dn}{\Delta_n}
\newcommand{\nnatural}[1]{\left\| #1 \right\|_{N,n}}

\newcommand{\nstarn}[1]{\left\| #1 \right\|_{\star,n}}

\newcommand{\sumsum}{\sum_{i=1}^N \sum_{j=0}^{n-1}}

\newcommand{\pnatural}[1]{\left\langle #1 \right\rangle_{N,n}}

\newcommand{\pstarn}[1]{\left\langle #1 \right\rangle_{\star,n}}

\title{Nonparametric Estimation of  the Diffusion-Interaction Function in  Particle Systems}
\author{Chiara Amorino \thanks{Universitat Pompeu Fabra and Barcelona School of Economics, Department of Economics and Business,
Ramon Trias Fargas 25-27, 08005, Barcelona, Spain
Email: chiara.amorino@upf.edu}\and  Francisco Pina \thanks{Department of Mathematics, University of Luxembourg, Maison du Nombre, 6 Avenue de la Fonte, 4364 Esch-sur-Alzette,
Luxembourg, Email: francisco.pina@uni.lu} \and 
Mark Podolskij\thanks{Department of Mathematics, University of Luxembourg, Maison du Nombre, 6 Avenue de la Fonte, 4364 Esch-sur-Alzette,
Luxembourg, Email: mark.podolskij@uni.lu}
}

\usepackage{autonum}
\numberwithin{equation}{section}

\date{\today}
\begin{document}
\maketitle

\begin{abstract}
\noindent
We study the problem of nonparametric estimation of the interaction function in   particle systems whose interaction enters through the diffusion coefficient. The particle system is observed discretely over a fixed time interval, and the interaction function is assumed to belong to a suitable nonparametric function class. We propose an estimation procedure based on empirical risk minimization over finite-dimensional sieve spaces, introducing a computable contrast function that exploits the local volatility of the observed trajectories. We derive a general non-asymptotic error bound under the intrinsic norm  associated with the statistical problem and, focusing the analysis on multiresolution wavelet approximation spaces, we obtain explicit polynomial convergence rates in two distinct regimes, depending on whether the particle or discretization error dominates. Finally, we establish  convergence rates for the classical $L^2$-norm.  \\

\noindent
 \textit{Keywords: Interacting particle systems; nonparametric estimation; diffusion estimation; McKean–Vlasov equations; wavelet approximation.}  \\ 
 
\noindent
\textit{AMS 2020 subject classifications:} Primary 60K35, 62G05; secondary 60K35, 62G20.

\end{abstract}

\tableofcontents
\section{Introduction}
Stochastic differential equations (SDEs) constitute a fundamental mathematical framework for modelling dynamical systems subject to random fluctuations and have become an indispensable tool in probability theory and its applications \cite{kar91,oks03}. While classical SDEs describe the evolution of isolated stochastic systems, many phenomena of practical interest involve large populations of interacting agents whose dynamics are influenced by the collective behaviour of the system. Such models naturally lead to systems of interacting stochastic differential equations, commonly referred to as interacting particle systems (IPS).
Originating from the pioneering works of Kac and McKean \cite{kac56,mck66}, IPS have become one of the central objects of modern probability theory. Over the past decades, their mathematical analysis has led to a rich theoretical framework, including fundamental results on propagation of chaos, McKean--Vlasov limits, well-posedness and stability; see, for instance, \cite{car18,cha22,mel96,szn91}. Beyond their mathematical interest, IPS have become a fundamental modelling paradigm across a wide range of scientific disciplines. They naturally describe collective phenomena arising from the interaction of many individual agents and have found applications in finance \cite{car19, dje22,gie20}, statistical physics \cite{mar13,dem86,spo91}, biology \cite{mun14}, neuroscience \cite{bal12, tou14} and  social sciences \cite{alb19, gom12, nal10}. More recently, IPS and mean-field models have also emerged as an important mathematical framework in modern machine learning, particularly in the analysis of neural networks \cite{deb20, chi18,rot22}, Sequential Monte Carlo methods \cite{del13}, diffusion generative models \cite{par24} and transformer-based language models \cite{ges25,rig25}, where interacting particle systems provide a natural mathematical framework for analysing the collective dynamics induced by self-attention mechanisms.

The numerous applications of interacting particle systems naturally motivate the development of a solid statistical framework. Although the probabilistic theory has been extensively developed over the past decades, the statistical analysis of IPS has become an active area of research only in recent years. Most of the existing works concern parametric and semiparametric inference, where the interaction mechanism depends on a finite-dimensional parameter to be estimated from the observations of the particle system. Under various observation schemes and asymptotic regimes, several estimation procedures have been proposed, together with consistency, asymptotic normality and efficiency results \cite{amo23, amorino2025polynomial, bel23, bis11,che21,del23,gen21,gen21.02,liu22,sha23} among others. 

In contrast, the literature on nonparametric inference for IPS is still rather limited. Existing nonparametric approaches include kernel methods for drift estimation \cite{della22}, moment-based methods for estimating the interaction function \cite{com25}, nonparametric maximum likelihood estimation of the drift field \cite{yao22}, and empirical risk minimization over finite-dimensional sieve spaces \cite{bel24}. Despite relying on different estimation techniques, all these contributions are concerned with models in which the interaction enters the drift coefficient while the diffusion coefficient does not depend on the empirical measure of the system. Models with distribution-dependent diffusion coefficients, however, naturally arise in many applications and provide a rich modelling framework. To the best of our knowledge, nonparametric inference for such models has not yet been investigated.

Motivated by this gap, we study the problem of estimating the unknown interaction function from discrete observations of an interacting particle system  described by the following SDE: 
\begin{equation}\label{eqIPS}
dX_t^i = b(X_t^i, \mu_t^N) dt + \left( \varphi \star \mu_t^N\right)(X_t^i)dW_t^i, \hspace{0.3cm} i=1,\dots,N,
\end{equation}
where $t\in [0,T]$ for a fixed time horizon $T$, $b:\R\times\mathcal{P}_2({\R})\longrightarrow {\R}$ denotes the drift function, $\varphi:\mathbb{R}\rightarrow\mathbb{R}$ is the unknown interaction function and $\{W_t^i\}_{i=1}^N$ is a collection of independent Brownian motions. Furthermore, $\mu_t^N$ denotes the empirical measure of the particle system at time $t$, defined by
\begin{equation}\label{eq:empiricalMeasureIPS}
\mu_t^N=\frac{1}{N}\sum_{i=1}^N\delta_{X_t^i},
\end{equation}
while $\varphi\star\mu_t^N$
denotes the convolution between the interaction function and the empirical measure.
A central role in our analysis is played by the mean-field limit of the interacting particle system. Indeed, under suitable assumptions on the coefficients, the propagation of chaos property implies that, as $N \rightarrow \infty$, the empirical measure $\mu_t^N$ of \eqref{eqIPS} converges weakly to $\mu_t$, the law of a limiting process whose dynamics are governed by the McKean--Vlasov stochastic differential equation
\begin{equation}\label{eqMcKeanVlasovSDE}
\left\{
\begin{aligned}
&d\overline{X}_t
= b(\overline{X}_t, \mu_t)\,dt
+ \bigl(\varphi \star \mu_t\bigr)(\overline{X}_t)\,dB_t, \\
&\mathrm{Law}(\overline{X}_t) = \mu_t,
\end{aligned}
\right.
\end{equation}
where $B$ is a Brownian motion.

The objective of this paper is to estimate the unknown interaction function $\varphi$ from discrete observations of the particle system \eqref{eqIPS}. Our approach is based on empirical risk minimization over a sequence of finite-dimensional sieve spaces. If on one side it follows the general philosophy developed in \cite{bel24} for the estimation of the interaction function in the drift, on the other moving the interaction from the drift to the diffusion fundamentally changes the statistical inference problem. First, since the interaction function enters the diffusion coefficient, the empirical contrast used in \cite{bel24} is no longer suitable and a new  contrast based on the local volatility of the observed trajectories is introduced in Section \ref{sec:MinimizationCompactFE}. Second, the theoretical analysis requires a fundamentally different approach. Indeed, a common strategy in the existing literature  for drift-interaction particle systems, adopted in particular in \cite{bel24}, is to first analyze the statistical problem in the McKean--Vlasov setting, where the particles are independent, and then transfer the resulting estimates to the interacting particle system through the Girsanov-based change-of-measure technique  introduced in \cite{della22}. However, this procedure is no longer available in the present setting, since it fundamentally relies on the coincidence of the diffusion coefficients of the interacting system and its McKean--Vlasov limit, which no longer holds when the interaction enters the diffusion coefficient.  Instead, our analysis is performed directly on the interacting  system, relying on quantitative propagation-of-chaos estimates in Wasserstein distance. Finally, unlike the continuous-observation framework considered in \cite{bel24}, the present work deals with a discrete observation scheme, so that the impact of the discretization error must also be quantified and carefully incorporated into the statistical analysis. {Indeed, the presence of the discretization error leads to two different regimes for the convergence of the estimator, which are carefully explained in the article and differ from previous work.}

In the present paper, we first introduce a computable contrast function and define an estimator based on empirical risk minimization over finite-dimensional sieve spaces. We then derive a non-asymptotic error bound under the intrinsic norm naturally associated with the statistical model. Next, specializing the analysis to wavelet approximation spaces, we obtain explicit bounds for the approximation error and derive convergence rates under the intrinsic loss. Finally, we investigate the performance of the estimator under the classical $L^2$ loss and establish convergence rates for its estimation error over a suitable class of functions.

The remainder of the paper is organized as follows. In Section \ref{sec:MinimizationCompactFE}, we introduce the proposed estimator and derive a general non-asymptotic error bound under the intrinsic norm of the model. Section \ref{sec:ApproximationError} investigates the approximation properties of multiresolution wavelet sieve spaces and establishes corresponding convergence rates. In Section \ref{sec:L2ERROR}, we study the estimator under the classical $L^2$-norm. Section \ref{sec:AuxiliaryProofs} contains most of the proofs and other auxiliary results.

\subsection{Notation and probabilistic tools}
All the probabilistic elements of the paper are defined on a filtered probability space $(\Omega,\mathcal{F},(\mathcal{F}_t)_{t\geq 0},\P).$
For $p \geq 1$ let $\mathcal{P}_p(\mathbb{R})$ denote the space of probability measures on $\R$ with finite $p$-th moment, that is,
\begin{equation}
\mathcal{P}_p(\mathbb{R})
:=
\left\{
\mu \in \mathcal{P}(\mathbb{R})
:
\int_{\mathbb{R}} |x|^p\mu(dx)<\infty
\right\}.
\end{equation}
For a set $A \subset \R$, we denote by $A^{\mathcal{C}}$ its complement. For $1\leq p<\infty$, let $L^p(A)$ denote the usual Lebesgue space on $A$,
equipped with the norm
\begin{equation}
    \|f\|_{L^p\left(A\right)} := \left(\int_A |f(x)|^pdx \right)^{1/p}.
\end{equation}
In particular, for $A = \R$, we denote the spaces by $L^p(\R)$ and we write $\|f\|_{L^p}$ for the associated norm. We denote by
\begin{equation}
    \|f\|_{\infty} := \sup_{x\in\R}|f(x)|, \quad \|f\|_{\text{Lip}}:= \sup_{x\neq y }\frac{|f(x) - f(y)|}{|x-y|},
\end{equation} the supremum and the Lipschitz norm of $f$, respectively. Moreover, we denote by 
\begin{equation}
    \widehat{f}(z) := \int_{\R}\exp(-izx)f(x)dx.
\end{equation} the Fourier transform of $f$. For a function $g:\R \rightarrow \R$ we denote the convolution between $f$ and $g$ as 
\begin{equation}
    (f \star g)(x) := \int_{\R}f(x-y)g(y)dy,
\end{equation}
and $\text{supp}(f)$ denotes the support of $f$.
For $p \geq 1$, given two probability measures $\mu,\nu$ on $\mathbb{R}$ with finite $p$-th moments, we define the $p$-Wasserstein distance by
\begin{equation}
W_p(\mu,\nu)
:=
\left(
\inf_{\pi\in\Pi(\mu,\nu)}
\int_{\mathbb{R}\times\mathbb{R}}
|x-y|^p\,d\pi(x,y)
\right)^{1/p},
\end{equation}
where $\Pi(\mu,\nu)$ denotes the set of all couplings of $\mu$ and $\nu$. 
Throughout the paper, we write
\begin{equation}
a \lesssim b
\end{equation}
if there exists a positive constant $C$, independent of $N$ and $n$, such that $a \leq Cb$, and $a \wedge b:= \text{min}(a,b).$
The set $B_R^{\C}(x)$ denotes the closed disk centered at $x$ with radius $R>0$ in the complex plane, and we simply write $B_R^{\mathbb{C}}$ when $x=0$. 

Next, we recall several classical notions and results from empirical process theory  that will be repeatedly used throughout the paper.
The Orlicz norm of a real-valued random variable $X$ with respect to a
nondecreasing, convex function $\psi:[0,\infty)\to[0,\infty)$ satisfying $\psi(0)=0$ is defined by
\begin{equation}
\|X\|_{\psi} := \inf \left\{ c > 0 : \mathbb{E}\left[\psi\left(\frac{|X|}{c}\right)\right] \leq 1 \right\}.
\end{equation}
A random variable $X$ is subgaussian if $\|X\|_{\psi_{e,2}} < \infty$ with $\psi_{e,2}(x) := e^{x^2} - 1.$
In particular, this implies that for some constants $C_1, C_2 > 0$,
\begin{equation}
{\P}\left(|X| \geq t\right) \leq 2 \exp\left( - \frac{C_1 t^2}{\|X\|_{\psi_{e,2}}^2} \right)
\quad \text{and} \quad
\mathbb{E}\left[|X|^p\right] ^{1/p} \le C_2 \sqrt{p}\, \|X\|_{\psi_{e,2}}, \quad p \geq 1.
\end{equation}
Likewise, $X$ is called subexponential if $\|X\|_{\psi_{e,1}} < \infty$ with $\psi_{e,1}(x) := e^{x} - 1.$
It implies that for some constants $C_1, C_2 > 0$,
\begin{equation}
{\P}\left(|X| \geq t\right) \leq 2 \exp\left( - \frac{C_1 t}{\|X\|_{\psi_{e,1}}} \right)
\quad \text{and} \quad
\mathbb{E}\left[|X|^p\right] ^{1/p} \le C_2 p\, \|X\|_{\psi_{e,1}}, \quad p \geq 1.
\end{equation}

Let  $(\mathcal{T}, d)$ denote a semimetric space and let $(X_t)_{t \in \mathcal{T}}$ be a stochastic process. The process has sub-$\psi$ increments if there exists $K > 0$ such that
\begin{equation}
\|X_t - X_s\|_{\psi} \le K d(t,s), \quad \forall\, t,s \in \mathcal{T}.
\end{equation}
Consider $\mathcal{T}_1 \subseteq \mathcal{T}$. For $\epsilon>0$, we denote by $N(\epsilon,\mathcal{T}_1,d)$ the covering number of the set $\mathcal{T}_1$ with respect to the semimetric $d$, that is, the minimal cardinality of a set of $\epsilon$-balls in the semimetric $d$ that covers $\mathcal{T}_1$. The logarithm of  $N(\epsilon,\mathcal{T}_1,d)$ is referred to as the metric entropy of $\mathcal{T}_1$. We define the Dudley integral 
\begin{equation}\label{equationDefinitionDI}
\text{DI}(\mathcal{T}_1,d,\psi) := \int_0^{\operatorname{diam}(\mathcal{T}_1)} \psi^{-1}\left(N(\epsilon,\mathcal{T}_1,d)\right)d\epsilon,
\end{equation}
where $\text{diam}(\mathcal{T}_1):= \sup_{x,x' \in \mathcal{T}_1}d(x,x')$  and $\psi^{-1}$ is the inverse function of $\psi$.
A key result for stochastic processes with controlled sub-$\psi$ increments is the following  maximal inequality  which provides uniform control over the increments of the process.

\begin{theorem}\label{TheoremMaximalInequality}(Adapted version of Theorem 8.4 in \cite{Kor08}: General maximal inequality for Orlicz norm)
Let $(X_t)_{t\in \mathcal{T}}$ be a separable stochastic process such that, for some
semimetric $d$ on $\mathcal{T}$ and some constant $r < \infty$,
\begin{equation}
\|X_s - X_t\|_{\psi} \leq r d(s,t),
\quad \forall\, s,t \in \mathcal{T}.
\end{equation}Then,
%for any $\eta, \delta > 0$,
%\[
%\|\sup_{\substack{s,t \in T \\ d(s,t) \le \delta}} |X(s) - X(t)|\; \|_{\psi_{e,2}}\le\;K\left[
%\int_{0}^{\eta}
%\sqrt{\log\bigl(D(\varepsilon, d)\bigr)}\, d\varepsilon
%\;+\;
%\delta\,\sqrt{\log\bigl(D_{2}(\eta, d)\bigr)}
%\right],
%\]
there exists a  constant $C < \infty$ depending only on  $\psi$ such that
\begin{equation}
\left\| \sup_{s,t \in \mathcal{T}} |X_s - X_t|
\;  \right\|_{\psi}  \le\;
2r C \int_{0}^{\operatorname{diam}(\mathcal{T})}
\psi^{-1}\left(N(\epsilon,\mathcal{T},d)\right)\, d\epsilon \lesssim r\operatorname{DI}\left(\mathcal{T}, d, \psi\right). 
\end{equation}
\end{theorem}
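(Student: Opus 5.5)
This is the classical chaining argument behind Theorem 8.4 of \cite{Kor08}. I would rederive it under the standing convention that $\psi$ is a Young function satisfying the growth condition $\limsup_{x,y\to\infty}\psi(x)\psi(y)/\psi(cxy)<\infty$ for some $c>0$. This condition holds for $\psi_{e,1}$ and $\psi_{e,2}$, the only cases used in the paper. If the Dudley integral diverges there is nothing to prove, so I assume $\operatorname{DI}(\mathcal{T},d,\psi)<\infty$. In particular $\mathcal{T}$ is totally bounded. By separability it suffices to bound $\max_{s,t\in T_0}|X_s-X_t|$ uniformly over finite sets $T_0\subset\mathcal{T}$ and then pass to the limit by monotone convergence.

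The first ingredient is the finite maximal inequality: for any random variables $Y_1,\dots,Y_m$,
\begin{equation}
\Bigl\|\max_{k\le m}|Y_k|\Bigr\|_{\psi}\le K\,\psi^{-1}(m)\max_{k\le m}\|Y_k\|_{\psi}.
\end{equation}
I would prove it from the growth condition. For $y\ge1$ and $x$ large one has $\psi(x)/\psi(y)\le C\psi(cx/y)$ (up to a harmless correction for small arguments). Then
\begin{equation}
\E\,\psi\Bigl(\frac{\max_k |Y_k|}{c'\psi^{-1}(m)\max_k\|Y_k\|_\psi}\Bigr)\le \frac{C}{m}\sum_{k=1}^m \E\,\psi\Bigl(\frac{|Y_k|}{\|Y_k\|_\psi}\Bigr)+\text{small}\le 1
\end{equation}
for a suitable constant $c'$.

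Next comes the chaining. Set $D=\operatorname{diam}(\mathcal{T})$ and $\epsilon_j=D2^{-j}$. Let $S_j\subset T_0$ be a maximal $\epsilon_j$-separated subset, so $|S_j|\le N(\epsilon_j/2,\mathcal{T},d)$. Define links $\pi_j:S_{j+1}\to S_j$ with $d(t,\pi_j t)\le\epsilon_j$, and continue until $S_J=T_0$. For $t\in T_0$, writing $t_J=t$ and $t_{j}=\pi_j t_{j+1}$, the telescoping identity $X_t-X_{t_0}=\sum_j(X_{t_{j+1}}-X_{t_j})$ gives
\begin{equation}
\Bigl\|\max_{t\in T_0}|X_t-X_{t_0}|\Bigr\|_\psi\le \sum_{j} \Bigl\|\max_{u\in S_{j+1}}|X_u-X_{\pi_j u}|\Bigr\|_\psi\le K r\sum_j \epsilon_j\,\psi^{-1}(|S_{j+1}|).
\end{equation}
The second inequality uses the finite maximal inequality together with the increment hypothesis $\|X_u-X_{\pi_j u}\|_\psi\le r\epsilon_j$. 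Since $N(\cdot,\mathcal{T},d)$ is nonincreasing, $\epsilon_j\psi^{-1}(N(\epsilon_{j+2},\mathcal{T},d))\le 4\int_{\epsilon_{j+3}}^{\epsilon_{j+2}}\psi^{-1}(N(\epsilon,\mathcal{T},d))\,d\epsilon$. Summing over $j$ therefore bounds the right-hand side by $C r\int_0^{D}\psi^{-1}(N(\epsilon,\mathcal{T},d))\,d\epsilon$. Because $S_0$ is a single point $t_0$, the triangle inequality gives
\begin{equation}
\max_{s,t}|X_s-X_t|\le 2\max_t|X_t-X_{t_0}|,
\end{equation}
which is the source of the factor $2rC$ in the statement. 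The final bound $\lesssim r\operatorname{DI}(\mathcal{T},d,\psi)$ is then just the definition \eqref{equationDefinitionDI}.

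The main obstacle is the finite maximal inequality for a general $\psi$. A naive union bound gives the $\psi^{-1}(m)$ factor only when $\psi$ has the multiplicative growth property, so this is where that hypothesis must be invoked. The rest is careful bookkeeping of the dyadic index shifts when comparing the sum with the integral.
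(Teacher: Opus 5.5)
Your proposal is correct and is essentially the standard chaining proof behind Theorem 8.4 of \cite{Kor08}; the paper gives no proof of its own, only this citation. You also rightly restore the growth condition on $\psi$ that the paper's adapted statement omits, and your bound $|S_j|\le N(\epsilon_j/2,\mathcal{T},d)$ is exactly what justifies replacing Kosorok's packing numbers by covering numbers. Two small fixes: first, the pointwise inequality you need, and in fact use in your display, is $\psi\bigl(x/(cy)\bigr)\le C\,\psi(x)/\psi(y)$ for $x/(cy)$ and $y$ large, obtained from the growth condition with $u=x/(cy)$, $v=y$; the inequality as you wrote it, $\psi(x)/\psi(y)\le C\psi(cx/y)$, points the wrong way and fails for $\psi_{e,2}$. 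Second, since $\epsilon_{j+2}-\epsilon_{j+3}=\epsilon_j/8$, the comparison of the sum with the integral holds with constant $8$ rather than $4$.
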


\subsection{Main assumptions}
We first introduce the main assumptions on our model.

\begin{assumption}\label{AssumptionsProcess} We assume that the particle system \eqref{eqIPS} satisfies the following assumptions:
\begin{itemize}
    \item [$\mathcal{A}_1)$] \label{AssumptionDrift} 
        The drift function  $b:{\R}\times\mathcal{P}_2({\R})\longrightarrow{\R}$ is {globally Lipschitz}. In particular, there exists a constant $B > 0$ such that 
        \begin{equation}
        |b(x, \mu)| \leq B(1 + |x| + W_2(\mu, \delta_0)),
        \end{equation}
        for all $x \in \R$ and $\mu\in\mathcal{P}_2(\R)$.
    \item [$\mathcal{A}_2)$] \label{AssumptionInteractingFunction} 
        The interaction function $\varphi: \mathbb{R} \to \mathbb{R}$ is globally Lipschitz continuous and bounded, satisfying that
        \begin{equation}
        |\varphi(x) - \varphi(y)| \leq L_\varphi |x - y|, \quad  |\varphi(x)| \le K_\varphi, \quad \forall \, x,y \in \R,
        \end{equation}
        for some finite constants $L_\varphi,  K_\varphi > 0$. 
    \item [$\mathcal{A}_3)$]\label{AssumptionNonnegativeVolatility} 
        The volatility of the system, $(\varphi \star \mu_{t}^{N})(x)$, is non-negative for all $t \in[0, T]$, $x \in \R.$
    \item[$\mathcal{A}_4)$] \label{AssumptionMomentsFinite}
        The initial conditions $(X_0^i)_{i=1}^N$ are i.i.d. with common law $\mu_0$ and admit moments of every order, i.e., $\forall p\geq1, \; i=1,\ldots,N,$ 
        \begin{equation}
        \mathbb{E}\left[|X_0^i|^p\right] < \infty.
        \end{equation}
\end{itemize}  
\end{assumption}
\noindent
 Under \cref{AssumptionsProcess}, both the interacting particle system \eqref{eqIPS} and its associated mean-field limit \eqref{eqMcKeanVlasovSDE} admit unique strong solutions. Moreover, since the initial conditions have moments of every order, standard moment estimates imply that, for every $p\geq1$,
\begin{equation}\label{eq:MomentsBoundforallP}
\sup_{1\leq i\leq N}\sup_{t\in[0,T]}{\E}\left[|X_t^i|^p\right]<\infty, \quad \sup_{t\in[0,T]}{\E}\left[|\overline{X}_t|^p\right]<\infty.
\end{equation}
For a detailed proof of this type of argument in the context of particle systems, we refer the reader to \cite[Lemma 5.1]{amo23}.

\section{Construction of the estimator and a general error bound}\label{sec:MinimizationCompactFE}
In this section, we introduce the proposed estimator and provide theoretical guarantees of its  statistical properties.
\subsection{Construction of the estimator}
We start by describing the framework of our estimation methodology. We consider the statistical scenario in which the trajectories of the $N$ particles satisfying \eqref{eqIPS} are discretely observed at times $t_j$, $j=0,\dots, n$, for $t_j = j\Delta_n$ with $\dn = T/n$ for a fixed $T>0$.
Our estimation approach is based on empirical risk minimization over a sequence of  approximation spaces. Under this methodology, we introduce the following sieve spaces $(S_{m})_{m\geq 1}$. Each $S_m$ is a  compact space of functions with respect to the supremum norm. Moreover, the elements of $S_m$ are  uniformly bounded and uniformly Lipschitz functions. In particular,  there exist constants $K_\varphi,L_\varphi>0$ such that, for every $m\ge1$ and every $f\in S_m$,
\begin{equation}
    \|f\|_\infty\le K_\varphi,
\qquad
\|f\|_{\mathrm{Lip}}\le L_\varphi.
\end{equation}
 When  convenient, we denote these spaces by $S_m(K_\varphi, L_{\varphi})$ to make explicit their dependence 
on the parameters $K_{\varphi}, L_{\varphi}$.
\noindent
Under our observation scheme, the oracle empirical risk minimization problem would be formulated as 
\begin{equation}\label{expressionIdealEstimator}
    \argmin{f \in S_N} \left\{ \frac{1}{Nn}\sum_{i=1}^N \sum_{j=0}^{n-1} \left( (f \star \mu_{t_j}^N)(X_{t_j}^i) - (\varphi \star \mu_{t_j}^N)(X_{t_j}^i)\right)^2\right\}.
\end{equation}
Directly linked to this formulation, we consider the following bilinear forms 
\begin{equation}\label{eqnaturalScalarProduct}
    \pnatural{f,g} = \frac{1}{nN}\sum_{i=1}^N \sum_{j=0}^{n-1} (f \star \mu_{t_j}^{N})(X_{t_j}^i)  (g \star \mu_{t_j}^{N})(X_{t_j}^i),
\end{equation}
\begin{equation}
    \pstarn{f,g} =  \frac{1}{n}\sum_{j=0}^{n-1} \int_{\R}(f \star \mu_{t_j})(x)(g \star \mu_{t_j})(x)d\mu_{t_j}(x),
\end{equation}
and define the associated norms $\nnatural{f}^2 := \pnatural{f,f}$, $\nstarn{f}^2 := \pstarn{f,f}$.
We emphasize that $\pnatural{\cdot,\cdot}$ is a random scalar product depending on the observations, while $\pstarn{\cdot,\cdot}$ is its deterministic counterpart, where $\mu_{t}$ refers to the law at time $t$ of $\overline{X}_t$, the solution of  \eqref{eqMcKeanVlasovSDE}.
We observe that, with respect to an element $f \in S_N$, minimizing \eqref{expressionIdealEstimator} is equivalent to minimizing 
\begin{equation}
    \nnatural{f - \varphi}^2 = \nnatural{\varphi}^2 + \nnatural{f}^2 -\frac{2}{nN}\sumsum (f \star \mu_{t_j}^{N})(X_{t_j}^i)(\varphi \star \mu_{t_j}^{N})(X_{t_j}^i).
\end{equation}
In this line, we define the oracle estimator and its deterministic counterpart as 
\begin{equation}\label{idealEstimatorChaos}
    \varphi_N^{\star} := \argmin{f \in S_N} \nnatural{f - \varphi}^2, 
\end{equation}\label{idealEstimatorMKV}
\begin{equation}
    \varphi^{\star}:= \argmin{f\in S_N} \nstarn{f-\varphi}^2.
\end{equation}
However, none of these estimators are directly computable just from the observations, since both involve the interaction function $\varphi$. In contrast, since we assume that our system satisfies \hyperref[AssumptionNonnegativeVolatility]{\(\mathcal{A}_3)\)}, we propose the following estimator 
\begin{equation}\label{expression3}
  \widehat{\varphi}_N:=   \argmin{f \in S_N} \left\{ \frac{1}{Nn}\sum_{i=1}^N \sum_{j=0}^{n-1} \left( (f \star \mu_{t_j}^N)(X_{t_j}^i)- \sqrt{\frac{\pi}{2\dn}}\left|X^i_{t_j+1} - X^i_{t_j}\right|\right)^2\right\}.
\end{equation}
To provide some intuition for the proposed estimator, we note that $\widehat{\varphi}_N$ can be interpreted as a computable counterpart of the estimator one would have obtained via the minimization in \eqref{expressionIdealEstimator}. Indeed, at high-frequency the drift component of the diffusion model is asymptotically negligible and we deduce, conditionally on $\mathcal{F}_{t_j}$,
\[
\Delta_n^{-1/2} (X^i_{t_j+1} - X^i_{t_j}) \approx 
\Delta_n^{-1/2} (\varphi \star \mu_{t}^{N})( X^i_{t_j}) 
(W^i_{t_j+1} - W^i_{t_j}) \sim \mathcal{N}\left(0,
(\varphi \star \mu_{t}^{N})^2( X^i_{t_j})\right).
\]
Consequently, we obtain the approximation 
\[
\E\left[\sqrt{\frac{\pi}{2\dn}}\left|X^i_{t_j+1} - X^i_{t_j}\right| | \mathcal{F}_{t_j}\right]\approx 
(\varphi \star \mu_{t}^{N})( X^i_{t_j}), 
\]
which provides the heuristic justification for the construction in \eqref{expression3}. A rigorous study of the estimator is provided in the following sections.
Expanding \eqref{expression3}, we observe that, minimizing with respect to $f \in S_N$ is equivalent to minimizing with respect to the same function space the following random functional
\begin{align}\label{eqGammaN}
    \gamma_N(f) &= \frac{1}{Nn}\sum_{i=1}^N \sum_{j=0}^{n-1} \left( (f \star \mu_{t_j}^N)^2(X_{t_j}^i)  - \sqrt{\frac{2\pi}{\dn}}(f \star \mu_{t_j}^N)(X_{t_j}^i)\left|X^i_{t_j+1} - X^i_{t_j}\right|\right).
\end{align}\label{eqGamma}
Consequently, $\widehat{\varphi}_N$ admits the equivalent expression
\begin{equation}\label{eqEstimator}
    \widehat{\varphi}_N = \argmin{f \in S_N} \gamma_N(f).
\end{equation}
Throughout the article, we establish several theoretical results concerning the statistical performance of the estimator $\widehat{\varphi}_N$. A first key observation is that, introducing the quantity 
\begin{equation}\label{eqGammaprime}
    \gamma'_N(f) := \frac{2}{nN}\sumsum (f \star \mu_{t_j}^{N})(X_{t_j}^i)\left(\sqrt{\frac{\pi}{2\dn}}\left|X_{t_j+1}^i - X_{t_j}^i\right| -(\varphi \star \mu_{t_j}^{N})(X_{t_j}^i) \right),
\end{equation}
we have that 
\begin{equation}\label{eq1}
    \nnatural{f-\varphi}^2 = \gamma_N(f) +  \gamma'_N(f)  + \nnatural{\varphi}^2.
\end{equation}
This identity provides an intuitive justification for the proposed estimator. Minimizing the ideal empirical risk $\nnatural{f-\varphi}^2$ corresponds to solving the oracle problem \eqref{idealEstimatorChaos}, while minimizing the functional contrast $\gamma_N(f)$ differs from this ideal criterion only up to the remainder term $\gamma'_N(f)$, which in some sense can be interpreted as  the empirical discrepancy between the computable contrast and the oracle criterion.

\subsection{Main results}
Our first result  provides a non-asymptotic $L_p$-error bound  of the proposed estimator under the intrinsic seminorm associated with the geometry of the statistical model.

\begin{theorem}\label{OracleCompactSpace}
      Under \cref{AssumptionsProcess}, for all $p\geq 2,$ it holds that 
 \begin{align}
          \E \left[\nstarn{{{\widehat{\varphi}}}_N - \varphi}^p \right]^{1/p} \lesssim  \inf_{f\in S_N}\nstarn{f - \varphi} 
          + C_{p,K_{\varphi},L_{\varphi}}(1 + \mathfrak{D})^{1/2}(N^{-1/(4p)} + \dn^{1/4}),
    \end{align} 
    where $C_{p,K_{\varphi},L_{\varphi}}^2 = p(1 + K_{\varphi}L_{\varphi})(1 + K_{\varphi})$ and $\mathfrak{D} = \text{DI}\left( {S}_N(2K_{\varphi},2L_{\varphi}), \|\cdot\|_{\infty}, \psi_{e,1} \right).$ 
\end{theorem}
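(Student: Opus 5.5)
We start from the basic inequality of empirical risk minimization. Fix any $f\in S_N$. Since $\widehat{\varphi}_N$ minimizes $\gamma_N$ over $S_N$, the identity \eqref{eq1} gives
\begin{equation}
\nnatural{\widehat{\varphi}_N-\varphi}^2 \le \nnatural{f-\varphi}^2 + \gamma'_N(\widehat{\varphi}_N)-\gamma'_N(f).
\end{equation}
Our plan is to transfer this inequality from the random norm $\nnatural{\cdot}$ to the deterministic norm $\nstarn{\cdot}$. We then control the increment $\gamma'_N(\widehat{\varphi}_N)-\gamma'_N(f)$ uniformly over the difference set. That set lies in $S_N(2K_\varphi,2L_\varphi)$, which is why the Dudley integral $\mathfrak D$ is taken over that enlarged class.

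\textbf{Step 1 (random versus deterministic norm).} We show that
\begin{equation}
\E\Big[\sup_{g\in S_N(2K_\varphi,2L_\varphi)}\big|\nnatural{g}^2-\nstarn{g}^2\big|^{p/2}\Big]^{2/p}\lesssim C^2_{p,K_\varphi,L_\varphi}(1+\mathfrak D)\big(N^{-1/(2p)}+\dn^{1/2}\big).
\end{equation}
Applied to $g=\widehat{\varphi}_N-\varphi$ and $g=f-\varphi$ (both in the enlarged class if $\varphi\in S_N$, otherwise treated by a triangle-type argument), and after taking square roots, this produces the $N^{-1/(4p)}$ term.

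To prove the bound, we couple each $X^i$ with an independent McKean--Vlasov copy $\overline X^i$ driven by the same Brownian motion. Quantitative propagation of chaos in $W_2$ gives $\E[W_2(\mu^N_t,\bar\mu^N_t)^2]+\E|X^i_t-\overline X^i_t|^2\lesssim N^{-1}$ up to the empirical-measure rate, where $\bar\mu^N_t$ is the empirical measure of the $\overline X^i_t$. Since $g$ is bounded and Lipschitz, $g\star\mu$ is Lipschitz in both the space variable and the measure (in $W_1$). The replacement error is therefore bounded by $K_\varphi L_\varphi$ times these distances, uniformly in $g$.

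For the i.i.d. part we replace $\bar\mu^N_t$ by $\mu_t$, paying a $W_1(\bar\mu^N_t,\mu_t)$ term of order $N^{-1/2}$ in dimension one with moments of all orders. The centred average $\frac1{nN}\sum(g\star\mu_{t_j})^2(\overline X^i_{t_j})-\nstarn{g}^2$ is a sum of bounded independent terms. It has sub-exponential increments in $\|\cdot\|_\infty$ with constant of order $N^{-1/2}K_\varphi$, so \cref{TheoremMaximalInequality} with $\psi_{e,1}$ gives a sup bound proportional to $\mathfrak D$. The worse exponent $N^{-1/(4p)}$ comes from converting the $L^1$ bounds on the coupling errors into $L^{p}$ bounds on the supremum, using boundedness of the integrand and interpolation.

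\textbf{Step 2 (the noise term $\gamma'_N$).} We decompose $\sqrt{\pi/(2\dn)}|\Delta X^i_j|-(\varphi\star\mu^N_{t_j})(X^i_{t_j})$ into three parts:
\begin{itemize}
\item a martingale difference $\xi^i_j$, obtained by centring conditionally on $\mathcal F_{t_j}$;
\item a bias of order $\dn^{1/2}$, coming from the drift and from the time variation of the volatility over $[t_j,t_{j+1}]$, controlled by the Lipschitz property and the moment bounds \eqref{eq:MomentsBoundforallP};
\item the Gaussian approximation $\E|\sigma Z|=\sigma\sqrt{2/\pi}$, which is exact for frozen volatility.
\end{itemize}
The bias contributes at most $K_\varphi\dn^{1/2}$ uniformly in $g$. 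For the martingale part, $\frac{1}{nN}\sum(g\star\mu^N_{t_j})(X^i_{t_j})\xi^i_j$ has sub-exponential increments in $g$, via Bernstein/Burkholder inequalities for bounded-predictable-times-subgaussian martingale sums, with scale $(nN)^{-1/2}\|g-g'\|_\infty$. \cref{TheoremMaximalInequality} then yields a sup bound of order $(nN)^{-1/2}\mathfrak D$, which is dominated by the stated rates.

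\textbf{Step 3 (combine).} Combining gives
\begin{equation}
\nstarn{\widehat{\varphi}_N-\varphi}^2\le \nstarn{f-\varphi}^2+2R_1+R_2,
\end{equation}
where $R_1$ is the Step 1 supremum and $R_2$ the Step 2 supremum. We take $L^{p/2}$ norms, then square roots using $\sqrt{a+b}\le\sqrt a+\sqrt b$, and finally the infimum over $f$. The square root is exactly what turns $\dn^{1/2}$ and $N^{-1/(2p)}$ into $\dn^{1/4}$ and $N^{-1/(4p)}$.

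The main obstacle is Step 1. The propagation-of-chaos coupling and the empirical-measure approximation must be controlled uniformly over the function class, in $L^p$, without a Girsanov argument. Getting the precise exponent $1/(4p)$ requires care in the interpolation between $L^1$ Wasserstein estimates and the bounded supremum.
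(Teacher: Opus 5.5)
Your architecture is exactly the paper's: the basic inequality from \eqref{eq1}, the transfer to $\nstarn{\cdot}$ through two uniform remainders, then $L^{p/2}$ norms and square roots. The proposal is correct up to one repair in Step~2, and the differences lie inside the two remainder bounds.

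\textbf{Step~1.} The paper uses no coupling inside the norm and no chaining. For $g=f-\varphi$ one has $\|g\|_\infty\le 2K_\varphi$ and $\|g\|_{\mathrm{Lip}}\le 2L_\varphi$. Hence $x\mapsto(g\star\mu_{t_j})^2(x)$ is $8K_\varphi L_\varphi$-Lipschitz, and $|(g\star\mu^N_{t_j})^2-(g\star\mu_{t_j})^2|\le 8K_\varphi L_\varphi W_1(\mu^N_{t_j},\mu_{t_j})$. Kantorovich--Rubinstein then gives, deterministically and uniformly in $g$, $\sup_g|\nnatural{g}^2-\nstarn{g}^2|\le 16K_\varphi L_\varphi\frac1n\sum_j W_1(\mu^N_{t_j},\mu_{t_j})$. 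The rate comes from the $p$-th moment bound of Lemma~\ref{LemmaBoundW1p}.

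Consequences of this comparison:
\begin{itemize}
\item No Dudley integral enters this term.
\item Whether $\varphi\in S_N$ is irrelevant, so your triangle-type argument is unnecessary.
\item Your chaining of the i.i.d.\ part is valid but superfluous: the same duality bounds it by $8K_\varphi L_\varphi W_1(\overline{\mu}^N_{t_j},\mu_{t_j})$.
\end{itemize}
What your route buys is a more elementary source for the exponent. Since $0\le R_1\le 4K_\varphi^2$, you have $\E[R_1^{p/2}]\le(4K_\varphi^2)^{p/2-1}\E[R_1]$. So a second-moment coupling bound $\sup_t\E|X^1_t-\overline X^1_t|^2\lesssim N^{-1/2}$ suffices. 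This rate, not $N^{-1}$, is what a $W_2$-Lipschitz drift gives in general in dimension one, through the empirical $W_2^2$ rate. It yields exactly $\E[R_1^{p/2}]^{1/p}\lesssim N^{-1/(4p)}$ without any $L^p$ Gronwall argument. The $\dn^{1/2}$ in your Step~1 display is spurious but harmless.

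\textbf{Step~2.} Write $C_{\dn}=\sqrt{\pi/(2\dn)}$, $\sigma^i_j=(\varphi\star\mu^N_{t_j})(X^i_{t_j})\ge0$ (by $\mathcal{A}_3$), and $\Delta W^i_j=W^i_{t_{j+1}}-W^i_{t_j}$. The paper splits pathwise:
$C_{\dn}|\Delta X^i_j|-\sigma^i_j=C_{\dn}\bigl(|\Delta X^i_j|-|\sigma^i_j\Delta W^i_j|\bigr)+\sigma^i_j\epsilon^i_j$, with $\epsilon^i_j=C_{\dn}|\Delta W^i_j|-1$. The $\epsilon^i_j$ are i.i.d.\ subgaussian and independent of $\mathcal{F}_{t_j}$. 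That independence is what gives subgaussian increments at scale $(nN)^{-1/2}$.

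Your conditional centering handles the bias without chaining: it is bounded uniformly in $f$ by $\|f\|_\infty$ times an $O(\dn^{1/2})$ quantity. However, your $\xi^i_j$ still contains $C_{\dn}(|\Delta X^i_j|-|\sigma^i_j\Delta W^i_j|)$ minus its conditional mean. Given $\mathcal{F}_{t_j}$, these pieces are not independent across $i$, because the interaction acts through $\mu^N_s$ on $[t_j,t_{j+1}]$. With a drift of only linear growth they are also not uniformly subgaussian. So the claimed $(nN)^{-1/2}$ sub-exponential increments are not justified as written.

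The fix is to peel this remainder off and bound it like the bias, by $2\|f\|_\infty\frac{1}{nN}\sum_{i,j}C_{\dn}|\Delta X^i_j-\sigma^i_j\Delta W^i_j|$. That quantity is $O(\dn^{1/2})$ in $L^{p/2}$, uniformly in $f$. Then run the martingale argument only on $\sigma^i_j\epsilon^i_j$. This is the paper's decomposition, and with it your Step~2 conclusion, and hence the theorem, goes through.
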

\noindent

\begin{proof}By definition of ${\widehat{\varphi}}_N$ and $\varphi^{\star}$, it yields that  ${\gamma}_N({\widehat{\varphi}}_N) \leq {\gamma}_N(\varphi^{\star})$. From \eqref{eq1}, we have 
    \begin{equation}
        \nnatural{{\widehat{\varphi}}_N - \varphi}^2 - {\gamma}'_N({\widehat{\varphi}_N})  \leq \nnatural{{\varphi}^{\star} - \varphi}^2 - {\gamma}'_N(\varphi^{\star}). 
    \end{equation}
Adding and subtracting $\nstarn{{\widehat{\varphi}}_{N} - \varphi}^2$ and $\nstarn{\varphi^{\star} - \varphi}^2$, we obtain 
\begin{align}
    \nstarn{{\widehat{\varphi}}_{N} - \varphi}^2 &\leq \nstarn{\varphi^{\star} - \varphi}^2 + \left(\nnatural{\varphi^{\star} - \varphi}^2 - \nstarn{\varphi_{\star} - \varphi}^2 + \nstarn{{\widehat{\varphi}}_{N} - \varphi}^2 - \nnatural{{\widehat{\varphi}}_N - \varphi}^2 \right)\\
    &+ \left({\gamma}'_N({\widehat{\varphi}_N})- {\gamma}'_N(\varphi^{\star})\right) \\
    &\leq \nstarn{\varphi^{\star} - \varphi}^2 + 2 \left( \sup_{f \in S_N}\left|\nnatural{f-\varphi}^2 - \nstarn{f - \varphi}^2\right| + \sup_{f \in S_N}\left|{\gamma}_{N}'(f)\right|\right).
\end{align}
It follows that
\begin{align}
          \E \left[\nstarn{{\widehat{\varphi}}_N - \varphi}^p \right]^{1/p} &\leq \inf_{f\in S_N}\nstarn{f - \varphi} + \sqrt{2}\mathbb{E} \left[ \sup_{f \in S_N}\left|\nnatural{f-\varphi}^2 - \nstarn{f - \varphi}^2\right|^{p/2} \right]^{1/p}   \\ \label{eq:MainTheoremTwoErrors}
        &+\sqrt{2}\mathbb{E} \left[ \sup_{f \in S_N}\left|{\gamma}'_N(f)\right|^{p/2} \right]^{1/p}.
    \end{align} 
From Proposition \ref{PropositionLpControlDifferenceNorms}, for all $p\geq 2,$ we have 
\begin{equation}
    \mathbb{E} \left[ \sup_{f \in S_N}\left|\nnatural{f-\varphi}^2 - \nstarn{f - \varphi}^2\right|^{p/2} \right]^{1/p} \lesssim \frac{\sqrt{K_{\varphi}L_{\varphi}}}{N^{1/(4p)}}.
\end{equation}
On the other hand, from Proposition  \ref{PropositionLpControlGamma'}, for $p\geq 2$ we obtain 
\begin{equation}
    \mathbb{E} \left[ \sup_{f \in S_N}\left|{\gamma}'_N(f)\right|^{p/2} \right]^{1/p} \lesssim {p^{1/2} \dn^{1/4}(1+L_{\varphi}K_{\varphi})^{1/2} \left( \text{DI}\left( {S}_N(2K_{\varphi}, 2L_{\varphi}), \|\cdot\|_{\infty}, \psi_{e,1} \right) + K_{\varphi}\right)^{1/2}}.
\end{equation}
Combining the two estimates 
together with \eqref{eq:MainTheoremTwoErrors} we obtain
the claimed bound.
\end{proof}

\noindent
The bound in \eqref{eq:MainTheoremTwoErrors} admits a natural interpretation in terms of the three successive approximations underlying the estimation procedure. The first contribution is the approximation error
\begin{equation}
    \inf_{f\in S_N}\nstarn{f - \varphi},
\end{equation}
which is induced by restricting the estimator to the finite-dimensional sieve space $S_N$. This term depends on the approximation properties of the chosen sieve and a detailed analysis of this term for multiresolution wavelet spaces is provided in Section \ref{sec:ApproximationError}. 

The second contribution is what we define as the mean-field empirical error
\begin{equation}\label{eq:EmpiricalMFerror}
     \sup_{f \in S_N}\left|\nnatural{f-\varphi}^2 - \nstarn{f - \varphi}^2\right|.
\end{equation}
This term measures the discrepancy between the deterministic norm $\nstarn{\cdot}$, associated with the McKean--Vlasov limit and the empirical  norm $\nnatural{\cdot}$, which is based on the empirical measure $\mu_t^N$ of the observed interacting particle system. It thus quantifies the error in approximating the mean-field geometry by its finite-particle counterpart. Proposition \ref{PropositionLpControlDifferenceNorms} provides a detailed control of this term by exploiting quantitative propagation of chaos estimates, yielding the contribution $N^{-1/(4p)}$ in Theorem \ref{OracleCompactSpace}.

The third contribution is what we will refer to as the empirical contrast error
\begin{equation}\label{eq:EmpiricalContrastError}
 \sup_{f \in S_N}\left|{\gamma}'_N(f)\right|. 
\end{equation}
This term quantifies the discrepancy between the ideal empirical risk and the computable contrast as  made explicit in \eqref{eq1}. It combines the error resulting from approximating the local volatility by the observed increments over intervals of length $\dn$ and the martingale fluctuations generated by the Brownian noise. Proposition \ref{PropositionLpControlGamma'} controls both components, with the discretization component dominating the martingale one and therefore determining the $\dn^{1/4}$ contribution in Theorem \ref{OracleCompactSpace}.

\begin{remark}
    The control of the mean-field empirical error \eqref{eq:EmpiricalMFerror} and the empirical contrast error \eqref{eq:EmpiricalContrastError} differs fundamentally from previous approaches that are based on a change-of-measure argument, introduced in \cite{della22} and used, for instance, in \cite{bel24}. There, estimates are first established for $N$ i.i.d. particles solving the McKean--Vlasov equation and subsequently transferred to the interacting system. This strategy is not available in our setting, since the diffusion coefficients in \eqref{eqIPS} and \eqref{eqMcKeanVlasovSDE} do not coincide due to the convolution of the interaction function with the respective measures. We instead control these terms directly using quantitative propagation-of-chaos estimates.
\end{remark}
\noindent
The following corollary illustrates the two possible asymptotic regimes according to the relative growth of $N$ and $n$.

\begin{corollary}\label{CorollaryRateLpMCV}Let us suppose that for all $\epsilon >0$, $N\in \N$ and some $C_1>0$,
\begin{equation}
N(\epsilon, S_N(2K_{\varphi},2L_{\varphi}),\|\cdot\|_{\infty}) \leq C_1\epsilon^{-D_{N}},
\end{equation}  where $(D_N)_{N\geq1}$ is an increasing sequence
satisfying $D_N^2(N^{-1/p} \vee \dn ) \rightarrow 0$. Then, for every $p\geq2$, the following two
regimes hold:
\begin{itemize}
    \item[\textup{(i)}] If $\dn N^{1/p} \rightarrow 0$,  the discretization error is asymptotically negligible relative
    to the mean-field empirical error and
    \begin{equation}
        \E \left[\nstarn{{\widehat{\varphi}}_N - \varphi}^p \right]^{1/p} \leq \inf_{f\in S_N}\nstarn{f - \varphi} + C_{p,K_{\varphi,L_{\varphi},C_1}}\frac{D_N^{1/2}}{N^{1/(4p)}}.
    \end{equation} 
    \item[\textup{(ii)}] If $\dn N^{1/p} \rightarrow \infty$,  the discretization error asymptotically dominates
    the mean-field empirical error, and
    \begin{equation}
        \E \left[\nstarn{{\widehat{\varphi}}_N - \varphi}^p \right]^{1/p} \leq \inf_{f\in S_N}\nstarn{f - \varphi} + C_{p,K_{\varphi,L_{\varphi},C_1}}\dn^{1/4}D_N^{1/2}.
    \end{equation} 
\end{itemize}
\end{corollary}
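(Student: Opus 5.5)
The plan is to specialise Theorem \ref{OracleCompactSpace} by bounding the Dudley integral $\mathfrak{D}$ through the covering number hypothesis, and then compare the two error terms $N^{-1/(4p)}$ and $\dn^{1/4}$ in each regime.

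\textbf{Step 1: bounding the Dudley integral.} Since $\psi_{e,1}^{-1}(x)=\log(1+x)$ and $N(\epsilon,S_N(2K_\varphi,2L_\varphi),\|\cdot\|_\infty)\le C_1\epsilon^{-D_N}$, we get
\begin{equation}
\mathfrak{D}\le\int_0^{\mathrm{diam}}\log\bigl(1+C_1\epsilon^{-D_N}\bigr)\,d\epsilon .
\end{equation}
Every element of the set has sup norm at most $2K_\varphi$, so $\mathrm{diam}\le 4K_\varphi$. For $\epsilon\le 4K_\varphi$ we use
\begin{equation}
\log(1+C_1\epsilon^{-D_N})\le \log(1+C_1)+D_N\log^+(4K_\varphi/\epsilon)+D_N\log^+(1/(4K_\varphi))+\log 2 .
\end{equation}
This crude splitting is enough. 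The function $\log(1/\epsilon)$ is integrable near $0$, so
\begin{equation}
\mathfrak{D}\le C(K_\varphi,C_1)\,D_N .
\end{equation}
Since $D_N\ge 1$ for large $N$ (the sequence is increasing and the covering bound is nontrivial), we also get $(1+\mathfrak{D})^{1/2}\lesssim C(K_\varphi,C_1)\,D_N^{1/2}$.

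\textbf{Step 2: plugging into the theorem.} Theorem \ref{OracleCompactSpace} now gives
\begin{equation}
\E\bigl[\nstarn{\widehat{\varphi}_N-\varphi}^p\bigr]^{1/p}\lesssim\inf_{f\in S_N}\nstarn{f-\varphi}+C\,D_N^{1/2}\bigl(N^{-1/(4p)}+\dn^{1/4}\bigr).
\end{equation}

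\textbf{Step 3: the two regimes.}
\begin{itemize}
\item In regime (i), $\dn N^{1/p}\to0$ means $\dn^{1/4}=o(N^{-1/(4p)})$. For $N$ large we therefore have $N^{-1/(4p)}+\dn^{1/4}\le 2N^{-1/(4p)}$. The finitely many small $N$ are absorbed into the constant, since the left-hand side is uniformly bounded because everything is bounded by $K_\varphi$.
\item In regime (ii), $\dn N^{1/p}\to\infty$ means $N^{-1/(4p)}=o(\dn^{1/4})$, and the same argument gives the bound $2\dn^{1/4}$.
\end{itemize}
The hypothesis $D_N^2(N^{-1/p}\vee\dn)\to0$ is only needed to ensure the stochastic term vanishes, i.e. $D_N^{1/2}(N^{-1/(4p)}\vee\dn^{1/4})\to0$. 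It is not used for the inequality itself.

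\textbf{Main obstacle.} The only delicate point is Step 1: the constant in the Dudley bound must be independent of $N$ and linear in $D_N$. This needs care near the upper limit of integration, where $\epsilon^{-D_N}$ may be small, and with the additive constant $C_1$. Splitting the logarithm as above handles both issues.

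The remaining subtlety concerns the constant in the statement. It is written as an inequality with $C_{p,K_\varphi,L_\varphi,C_1}$ and without the implicit constant in front of the infimum. This requires checking that the leading coefficient in the proof of Theorem \ref{OracleCompactSpace} is exactly $1$, which it is (the $\inf$ term enters with coefficient one). The $\lesssim$ constants then only multiply the stochastic terms and can be absorbed into $C_{p,K_\varphi,L_\varphi,C_1}$.
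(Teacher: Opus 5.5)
Your proposal is correct and follows the paper's proof. You bound $\mathfrak{D}\lesssim D_N$ from the polynomial covering hypothesis, plug this into Theorem \ref{OracleCompactSpace}, and compare $N^{-1/(4p)}$ with $\dn^{1/4}$ in each regime; your explicit splitting of the logarithm and your check that the infimum enters with coefficient one fill in details the paper leaves implicit. One small point: monotonicity alone does not give $D_N\ge 1$. What you actually need for $(1+\mathfrak{D})^{1/2}\lesssim D_N^{1/2}$ is $\inf_N D_N>0$, which holds for instance when $D_N$ is positive and increasing, since then $D_N\ge D_1>0$.
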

\begin{proof}From the definition of the Dudley integral in \eqref{equationDefinitionDI}, we observe that 
\begin{equation}
   \mathfrak{D}  = \int_0^{\text{diam}(S_N(2K_{\varphi},2L_{\varphi}))}{\log\left(1 + N(\epsilon,{S}_N(2K_{\varphi},2L_{\varphi}), \| \cdot\|_{\infty})\right)}d\epsilon \lesssim {D_N}.
\end{equation}
Hence, Theorem \ref{OracleCompactSpace} gives
\[
\E\left[\nstarn{{\widehat{\varphi}}_N-\varphi}^p\right]^{1/p}
\lesssim
\inf_{f\in S_N}\nstarn{f-\varphi}
+D_N^{1/2}\left(N^{-1/(4p)}+\dn^{1/4}\right).
\]
If $\dn N^{1/p}\to0$, then $\dn^{1/4}=o(N^{-1/(4p)})$, yielding
\textup{(i)}. If $\dn N^{1/p}\to\infty$, then
$N^{-1/(4p)}=o(\dn^{1/4})$, yielding \textup{(ii)}. Finally, the condition $D_N^2(N^{-1/p} \vee \dn ) \rightarrow 0$ ensures that both stochastic contributions vanish asymptotically.
\end{proof}

\begin{remark}
The polynomial entropy assumption $N(\epsilon, S_N(2K_{\varphi},2L_{\varphi}),\|\cdot\|_{\infty}) \leq C_1\epsilon^{-D_{N}}$ is the classical Euclidean entropy condition. It is satisfied by a broad class of finite-dimensional approximation spaces, in which case the exponent $D_N$ typically coincides with the dimension of the approximating space $S_N$.
\end{remark}

\begin{remark}
We now comment on the condition regarding the discretization step, which simplifies to $\Delta_n N^{1/2} \to 0$ in our main case of interest, $p=2$. It is natural to compare this requirement with existing results in the literature. To our knowledge, estimation based on discrete observations of interacting particles has been studied primarily in the context of parameter estimation, such as in \cite{amo23, amo26}, where the stronger condition $N \Delta_n \to 0$ is required. 

Both works note that this condition may not be optimal and could potentially be relaxed through alternative approaches. This parallels the ergodic setting, where the original condition $T \Delta_n \to 0$ was successively relaxed to $T \Delta_n^2 \to 0$ \cite{Yos92}, and subsequently to general polynomial rates in $\Delta_n$ \cite{kessler} (see also \cite{amorino2020contrast, Shimizu} for SDEs with jumps). 

However, the condition arising in our context is not only weaker, but also fundamentally different in spirit. While previous conditions are indispensable for establishing the target asymptotic properties of the estimator, our setting delineates two distinct regimes: even when the discretization condition fails, our results remain valid, albeit at a slower rate of convergence.
\end{remark}

\noindent
    The  convergence rate related to the stochastic error term in Corollary \ref{CorollaryRateLpMCV} is not expected to be optimal. This suboptimality is mainly due to the general techniques used to control the empirical error term, which do not exploit additional structural properties of the interacting particle system and are designed to hold under rather mild assumptions. A more refined analysis, or stronger assumptions on the model, could potentially lead to sharper rates. Nevertheless, as will be seen in Section \ref{sec:L2ERROR}, this suboptimality does not impact the final convergence rate of the estimator with respect to the $L^2$-norm, which is the main objective of this work.

\section{Approximation error over wavelet spaces }\label{sec:ApproximationError}
The estimation error obtained in Theorem \ref{OracleCompactSpace} naturally decomposes into a stochastic term and an approximation term. The latter depends on the approximation properties of the sieve space $S_N(K_{\varphi}, L_{\varphi})$. In this section, we study this approximation error for a class of multiresolution wavelet spaces. While Corollary \ref{CorollaryRateLpMCV} holds for a general $(\mu_t)_{t\in [0,T]}$, to obtain meaningful quantitative bounds for the approximation term, we restrict our study to a class of models for which the density $\mu_t$ satisfies  Gaussian upper and lower bounds. To derive these estimates, we impose the following additional assumptions.

\begin{assumption}\label{AssumptionProcessII}
    Suppose that  \cref{AssumptionsProcess} holds. In addition, we assume the following:
    \begin{itemize}
        \item [$\mathcal{B}_1)$] The drift function $b:{\R}\times\mathcal{P}_2({\R})\longrightarrow{\R}$ is bounded.
        
        %\item [$\mathcal{B}_2)$]The volatility function of \eqref{eqMcKeanVlasovSDE}  and \eqref{eqIPS} is bounded away from 0. It means that  there exists a constant $\mathfrak{m} > 0$ such that $\forall x \in \R, t\in [0,T]$,
       % \begin{equation}
         %   (\varphi \star \mu_t)(x)  \geq \mathfrak{m} \quad \text{and}\quad
        % (\varphi \star \mu^N_t)(x) \geq \mathfrak{m}.
        %\end{equation}
        \item[$\mathcal{B}_2)$] The diffusion coefficient of the McKean--Vlasov equation \eqref{eqMcKeanVlasovSDE} is uniformly bounded away from zero. More precisely, there exists a constant $\mathfrak{m}>0$ such that $\forall x \in\mathbb{R}, t\in[0,T]$,
         \begin{equation}   
         (\varphi \star \mu_t)(x)\geq \mathfrak{m}. 
        \end{equation}
    \end{itemize}
\end{assumption}
\noindent
We remark that condition $\mathcal{B}_2$ is automatically satisfied if $\varphi \geq \mathfrak{m}$.

\begin{assumption}\label{AssumptionInitialDistribution}
    Assume that the initial distribution $\mu_0$ admits Gaussian upper and lower bounds. More precisely,  there exist positive constants $c_1,c_2,\mathcal{C}_1, \mathcal{C}_2$ such that for all $x \in \R$,
\begin{equation}
    c_1 \phi_{\mathcal{C}_1}(x) \leq \mu_0(x) \leq c_2 \phi_{\mathcal{C}_2}(x). 
\end{equation}
\end{assumption}
\noindent
Under these assumptions, the density $\mu_t$ admits Gaussian upper and lower bounds uniformly in $t \in [0,T]$, as stated in the following lemma.

\begin{lemma}\label{LemmaSubgaussianDistribution} Under \cref{AssumptionProcessII} and \cref{AssumptionInitialDistribution}, there exist positive constants $g_1,g_2, \sigma_1, \sigma_2 $ such that $\forall x \in \R$ and  $t\in [0,T];$
    \begin{equation}
        g_1\phi_{\sigma_1}(x) \leq \mu_t(x) \leq g_2\phi_{\sigma_2}(x),
    \end{equation}
    where $\phi_{\sigma_i}:\R \rightarrow \R$ denotes the density  of the Gaussian distribution $\mathcal{N}(0,\sigma_i^2)$.
\end{lemma}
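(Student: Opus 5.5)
The strategy is to freeze the measure flow and reduce the McKean--Vlasov equation \eqref{eqMcKeanVlasovSDE} to a linear, time-inhomogeneous SDE with bounded and nondegenerate coefficients. For such SDEs two-sided Gaussian (Aronson-type) bounds on the transition density are classical. Concretely, let $(\mu_t)_{t\in[0,T]}$ be the marginal laws of the unique solution $\overline{X}$ and set
\begin{equation}
\tilde b(t,x) := b(x,\mu_t), \qquad \tilde\sigma(t,x) := (\varphi\star\mu_t)(x).
\end{equation}
Then $\overline{X}$ solves the ordinary SDE $d\overline{X}_t = \tilde b(t,\overline{X}_t)dt + \tilde\sigma(t,\overline{X}_t)dB_t$.

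The first step is to verify the hypotheses of the Aronson-type estimates for this SDE. By $\mathcal{B}_1)$, $\tilde b$ is bounded. By $\mathcal{A}_2)$, $\mathfrak{m}\le \tilde\sigma \le K_\varphi$, where the lower bound comes from $\mathcal{B}_2)$. Moreover, $\tilde\sigma(t,\cdot)$ is Lipschitz in $x$ with constant $L_\varphi$ uniformly in $t$, since $|(\varphi\star\mu_t)(x)-(\varphi\star\mu_t)(y)|\le L_\varphi|x-y|$. Measurability in $t$ is immediate. Hölder continuity in $t$, if it is needed for the parametrix, follows from $W_1(\mu_t,\mu_s)\lesssim |t-s|^{1/2}$, which holds by the moment bounds \eqref{eq:MomentsBoundforallP}. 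Under these conditions the transition density $p(s,y;t,x)$ exists and satisfies, for $0\le s<t\le T$,
\begin{equation}
c^{-1}\phi_{\lambda^{-1}\sqrt{t-s}}(x-y) \le p(s,y;t,x) \le c\,\phi_{\lambda\sqrt{t-s}}(x-y),
\end{equation}
with constants depending only on $T$, $\|\tilde b\|_\infty$, $\mathfrak{m}$, $K_\varphi$, $L_\varphi$. One can cite the parametrix results of Friedman or Sheu, or Aronson's theorem for divergence-form operators after rewriting the generator using the Lipschitz regularity of $\tilde\sigma^2$.

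The second step integrates against the initial law, $\mu_t(x)=\int p(0,y;t,x)\mu_0(y)dy$, and uses \cref{AssumptionInitialDistribution}. Convolutions of Gaussians are Gaussian, $\phi_{a}\star\phi_{b}=\phi_{\sqrt{a^2+b^2}}$, which gives
\begin{equation}
c_1c^{-1}\phi_{\sqrt{\mathcal{C}_1^2+\lambda^{-2}t}}(x) \le \mu_t(x) \le c_2c\,\phi_{\sqrt{\mathcal{C}_2^2+\lambda^2 t}}(x).
\end{equation}
It remains to make these bounds uniform in $t\in[0,T]$. For the upper bound, $\phi_s(x)\le (\sigma_2/s)\phi_{\sigma_2}(x)$ whenever $s\le\sigma_2$, so we take $\sigma_2=\sqrt{\mathcal{C}_2^2+\lambda^2T}$ and absorb the factor $\sigma_2/\mathcal{C}_2$ into $g_2$. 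For the lower bound, $\phi_s(x)\ge (\sigma_1/s)\phi_{\sigma_1}(x)$ whenever $s\ge\sigma_1$, so we take $\sigma_1=\mathcal{C}_1$ and $g_1=c_1c^{-1}\mathcal{C}_1/\sqrt{\mathcal{C}_1^2+\lambda^{-2}T}$. At $t=0$ the assumption on $\mu_0$ gives the bound directly.

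I expect the first step to be the main obstacle. The drift is only bounded and measurable in $(t,x)$, since no regularity of $b$ in $\mu$ is imposed beyond Lipschitz in $W_2$, so one must invoke a two-sided Gaussian bound that tolerates bounded measurable drift. My first approach is to cite the Gaussian bounds for the driftless nondivergence-form equation, which hold under Lipschitz-in-$x$ and Hölder-in-$t$ uniformly elliptic diffusion. I would then add the bounded drift either via Girsanov with bounded density moments, using Hölder's inequality for the upper bound and a chaining or Harnack argument for the lower bound, or by citing results that handle bounded drift directly, such as Menozzi and coauthors on parametrix with bounded measurable drift.
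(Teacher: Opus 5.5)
Your proposal is correct and follows the same route as the paper. You freeze the flow $(\mu_t)$, apply two-sided Aronson bounds to the transition density of the resulting linear SDE, and integrate against the Gaussian bounds on $\mu_0$; the only difference is that you write out the Gaussian convolution and the uniformity in $t\in[0,T]$, which the paper leaves implicit.

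The obstacle you anticipate is milder than you fear. $\mathcal{A}_1)$ already makes $x\mapsto b(x,\mu_t)$ Lipschitz uniformly in $t$, as the paper notes. In any case, Aronson's divergence-form theorem, applied to the Fokker--Planck equation $\partial_t\mu=\partial_x\bigl(\tfrac12 a\,\partial_x\mu+(\tfrac12\partial_x a-\tilde b)\mu\bigr)$ with $a=\tilde\sigma^2$, allows bounded measurable lower-order coefficients.
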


\begin{proof} 
    Let us consider the  SDE
    \begin{equation}\label{eqdefinedSDE}
        dY_t = b(Y
        _t, \mu_t)dt + (\varphi \star \mu_t)(Y_t)dB_t ,\quad Y_0 \sim \mu_0,
    \end{equation}
    which is obtained by freezing the densities $\mu_t$ in the McKean-Vlasov SDE defined in \eqref{eqMcKeanVlasovSDE}. Note that $Y_t$ has the same law as  $\overline{X}_t$, the solution of \eqref{eqMcKeanVlasovSDE}.  Let us consider the associated infinitesimal generator of the diffusion process
    \begin{equation}
        \mathcal{L} = b(x,\mu_t)\frac{\partial}{\partial x} + \frac{1}{2}a(t,x)\frac{\partial^2}{\partial x^2},
    \end{equation}
    where $a(t,x) = (\varphi \star \mu_{t})^2(x)$. Under \cref{AssumptionProcessII},  the SDE \eqref{eqdefinedSDE} has a bounded and uniformly Lipschitz drift and the function $a(t,x)$ is uniformly elliptic. Therefore, if we denote by $\rho(t,y;0,x)$ the transition density of the process, by Aronson's estimates (Theorem 1 in \cite{Aro67}),   for all $x,y\in \R$ and $t\in(0,T]$, there exist positive constants $C_1,C_2,C_3$ and $C_4$ such that 
\begin{equation}\label{eqAroson}
\frac{C_1}{t^{1/2}}\exp\left(-\frac{|x-y|^2}{C_2t}\right)
\leq
\rho(t,y; 0,x)
\leq
\frac{C_3}{t^{1/2}}\exp\left(-\frac{|x-y|^2}{C_4t}\right).
\end{equation}
Integrating these bounds in $x$ with the initial distribution $\mu_0$, we obtain that 
\begin{equation}
\int_{\R}\frac{C_1}{t^{1/2}}\exp\left(-\frac{|x-y|^2}{C_2t}\right)\mu_0(x)dx
\leq
\rho_t(y)
\leq \int_{\R}
\frac{C_3}{t^{1/2}}\exp\left(-\frac{|x-y|^2}{C_4t}\right)\mu_0(x)dx.
\end{equation}
Finally, using the bounds for $\mu_0(x)$ in \cref{AssumptionInitialDistribution}, we conclude our lemma.
\end{proof}

\noindent
Following the same approach as in \cite{bel24}, in order to obtain explicit approximation rates, we consider sieve spaces generated by multiresolution wavelet expansions. These spaces provide a classical and widely used framework for nonparametric approximation, combining spatial localization with multiscale representations of the target function. Moreover, the decay of the associated wavelet coefficients naturally characterizes the regularity of the function. For further background on multiresolution analysis, we refer to \cite[Chapter 4]{Gin15}.

Let  $\xi \in L^2(\R)$ be the scaling function and let $\psi \in L^2(\R)$ be the associated mother wavelet, and  assume that $\xi $ and $\psi$ are compactly supported and sufficiently smooth. In particular, their Fourier transform satisfies   
\begin{equation}\label{eqw}
   |\widehat{\xi}(x)| + |\widehat{\psi}(x)| \leq C\left(1 + |x|\right)^{-w},  \hspace{0.5cm} x\in\R,
\end{equation}
 for some constants $C,w > 0.$ We note that these assumptions are satisfied, for instance, by sufficiently
regular Daubechies wavelets. For $j \geq 0$ and $k \in \Z,$ we define the dilated and translated functions
\begin{equation}
    \xi_{jk}(x) := 2^{j/2}\xi(2^jx - k ), \hspace{1cm} \psi_{jk}(x) := 2^{j/2}\psi(2^jx - k ).
\end{equation}
 The  family  $\{\xi_{0k}, \psi_{jk}\}_{j\geq0}^{k\in Z}$ forms an orthonormal basis in $L^2(\R)$ (see \cite[Eq. (4.160)]{Gin15}). Hence, every $f\in L^2(\R)$ admits the expansion
\begin{equation}
    f(x) = \sum_{k \in \Z}\tilde{c}_{0k}\hspace{0.05cm}\xi_{0k}(x) + \sum_{j \geq 0, k\in \Z}c_{jk}\hspace{0.05cm}\psi_{jk}(x) = \sum_{j\geq-1, k\in \Z} c_{jk}\hspace{0.05cm}\psi_{jk}(x),
\end{equation}
with the convention that $c_{-1k} = \tilde{c}_{0k}$ and $\psi_{-1k} = \xi_{0k}.$
Motivated by this representation, for \(C,r_1,r_2>0\), we introduce the class
\begin{equation}
    S(C,r_1,r_2) := \left\{f:\R \rightarrow\R \big| f(x) = \sum_{j\geq -1}\sum_{k\in \Z}c_{jk}\hspace{0.05cm}\psi_{jk}(x),\hspace{0.2cm}
    |c_{jk}| \leq C2^{-j(r_2 +1/2)}(1 + |k|)^{-r_1}\right\}.
\end{equation}
The parameters $r_1$ and $r_2$ respectively control the spatial localization and the smoothness of the functions through the decay of the coefficients in the spatial and resolution indices. The constant $C>0$ provides a uniform bound on the size of the class. In particular, $S(C,r_1,r_2)$ can be viewed as a localized counterpart of the classical Besov space $B_{\infty,\infty}^{r_2}$, where the additional decay in the spatial index enforces localization at infinity. To construct the sieve spaces used in the estimation procedure, we consider finite-dimensional approximations of $S(C,r_1,r_2)$  obtained by truncating both the resolution level and the spatial support of the wavelet expansion. More precisely, for $A>0$ and $J \in \N$, we define

\begin{gather}
    S(A,J,C,r_1,r_2) := \Big\{f:\R \rightarrow\R \big| f(x) = \sum_{j\geq -1}^{J-1} \sum_{\substack{k\in\mathbb Z\\ \operatorname{supp}(\psi_{jk})\subseteq [-A,A]}}c_{jk}\hspace{0.05cm}\psi_{jk}(x), \\
   \hspace{1cm} |c_{jk}| \leq C2^{-j(r_2 +1/2)}(1 + |k|)^{-r_1}\Big\}.
\end{gather}
The parameter $J$ determines the finest resolution level included in the approximation, whereas $A$ controls its spatial localization by restricting the wavelet basis to functions supported in $[-A,A]$.
For the approximation analysis, it will be convenient to work with a class characterized directly by analytic properties of the functions rather than by their wavelet coefficients. To this end, we introduce the following regularity class.
\begin{gather}
    \mathcal{A}(K,r_1,r_2) := \Big\{f:\R \rightarrow \R \hspace{0.1cm }\big| \hspace{0.1cm }\|f\|_{L^1(\R)} + \|f\|_{L^2(\R, |x|^2dx)} + \|f\|_{\infty} \leq K  \\
    \hspace{2.cm}|f(x)| \leq K(1 \wedge |x|^{-r_1}), \hspace{0.2cm} |\widehat{f}(x)| \leq K(1 \wedge |x|^{-r_2})\Big\}.
\end{gather}
The following lemma establishes a direct link between the wavelet classes introduced above and the class $\mathcal{A}(K,r_1,r_2)$.
\begin{lemma}\label{lemmaInclusionApproximationSpaces}(Lemma 3.2 of \cite{bel24})
    Assume that \begin{equation}\label{conditionr1r2}
     r_1 > \frac{3}{2}, \hspace{1cm }1< r_2 < w
\end{equation}
for $w > 0$ defined in \eqref{eqw}. Then, for every $C > 0,$ there exists a constant $K>0$ such that 
    \begin{equation}
         S(C,r_1,r_2) \subset\mathcal{A}(K,r_1,r_2),  \hspace{0.5cm}  S(A,J,C,r_1,r_2)\subset\mathcal{A}(K,r_1,r_2),
    \end{equation}
    for all $A>0$ and $J \in \N$.
\end{lemma}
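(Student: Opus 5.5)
To prove Lemma \ref{lemmaInclusionApproximationSpaces}, I would fix $f=\sum_{j\ge -1}\sum_k c_{jk}\psi_{jk}$ with $|c_{jk}|\le C2^{-j(r_2+1/2)}(1+|k|)^{-r_1}$ and verify each defining property of $\mathcal{A}(K,r_1,r_2)$ separately. All constants will be uniform in $A$ and $J$. This is automatic, because every bound below is a sum of absolute values over the index set, and that sum can only decrease when we truncate to $j\le J-1$ and $\operatorname{supp}(\psi_{jk})\subseteq[-A,A]$. So it suffices to treat $S(C,r_1,r_2)$; the truncated class follows with the same $K$. Throughout I use that $\xi,\psi$ are bounded with support in some $[-M,M]$, so $\operatorname{supp}\psi_{jk}\subseteq 2^{-j}[k-M,k+M]$.

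\textbf{Sup norm and spatial decay.} For fixed $x$ and level $j$, at most $2M+1$ indices $k$ satisfy $|2^jx-k|\le M$, and for these $1+|k|\gtrsim 1+2^j|x|$. Hence
\begin{equation}
|f(x)|\lesssim \sum_{j\ge -1}2^{j/2}2^{-j(r_2+1/2)}(1+2^j|x|)^{-r_1}\le \sum_{j\ge-1}2^{-jr_2}(1+|x|)^{-r_1}\lesssim 1\wedge|x|^{-r_1},
\end{equation}
because $r_2>0$ and $2^j|x|\ge |x|/2$. This gives both $\|f\|_\infty\le K$ and $|f(x)|\le K(1\wedge|x|^{-r_1})$. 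Since $r_1>3/2$, this decay also yields $\int |x|^2|f(x)|^2dx<\infty$ (it needs $2r_1-2>1$) and $\|f\|_{L^1}<\infty$ (it needs $r_1>1$). This is precisely where the condition $r_1>3/2$ enters.

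\textbf{Fourier decay.} We have $\widehat{\psi_{jk}}(z)=2^{-j/2}e^{-izk2^{-j}}\widehat\psi(2^{-j}z)$. Using $\sum_k(1+|k|)^{-r_1}<\infty$,
\begin{equation}
|\widehat f(z)|\lesssim \sum_{j\ge-1}2^{-j}2^{-jr_2}\big(1+2^{-j}|z|\big)^{-w}.
\end{equation}
I would split the sum at $2^j\approx|z|$. For $2^j\le|z|$, each term is bounded by $2^{-j(1+r_2)}(2^{-j}|z|)^{-w}=|z|^{-w}2^{j(w-1-r_2)}$. Summing this geometric series up to $2^j\sim|z|$ gives $|z|^{-1-r_2}$ when $w>1+r_2$, and at worst $|z|^{-w}$ up to a log otherwise; both are $\lesssim |z|^{-r_2}$ because $w>r_2$. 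For $2^j>|z|$, the terms sum to $\lesssim|z|^{-(1+r_2)}$. Together with the trivial bound $|\widehat f|\le\|f\|_{L^1}$, this gives $|\widehat f(z)|\le K(1\wedge|z|^{-r_2})$.

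\textbf{Main obstacle.} The hard step is the Fourier estimate, specifically checking that the split sum really closes for every configuration with $r_2<w$. If the crude bound loses too much there, I would instead estimate directly through the $L^\infty$ coefficient decay with the extra factor $2^{-j}$ coming from the $L^1$ normalization, which still leaves room to spare. The rest is routine bookkeeping of geometric series. The same computation also justifies exchanging sum and integral, since the series converges absolutely in $L^1$.
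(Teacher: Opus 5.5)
Your proof is correct and complete. The paper gives no argument of its own for this lemma; it imports it from Lemma 3.2 of \cite{bel24}, so there is no in-paper proof to compare against. Your direct verification is a natural self-contained proof:
\begin{itemize}
\item compact support plus the coefficient bound give $\|f\|_\infty$ and the spatial decay;
\item the $L^1$ and $L^2(|x|^2dx)$ bounds then follow, the latter precisely because $r_1>3/2$;
\item the identity $\widehat{\psi_{jk}}(z)=2^{-j/2}e^{-izk2^{-j}}\widehat{\psi}(2^{-j}z)$, combined with \eqref{eqw} and a dyadic split at $2^j\approx|z|$, gives the Fourier decay.
\end{itemize}

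A few small points to tidy up.

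\textbf{The level $j=-1$.} With the paper's convention $\psi_{-1k}=\xi_{0k}=\xi(\cdot-k)$, this level is not a dilation by $2^{-1}$. Your support and Fourier formulas should be replaced there by $[k-M,k+M]$ and $e^{-izk}\widehat{\xi}(z)$. This single level is bounded directly by $(1+|x|)^{-r_1}$ and by $(1+|z|)^{-w}\le(1+|z|)^{-r_2}$, so nothing changes.

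\textbf{Uniformity in $A$ and $J$.} This is even more immediate than you state. Setting the omitted coefficients to zero shows $S(A,J,C,r_1,r_2)\subseteq S(C,r_1,r_2)$, so the second inclusion follows from the first.

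\textbf{The hedge in your last paragraph.} It is unnecessary, because the split closes in every case. For the low-frequency part $2^j\le|z|$:
\begin{itemize}
\item $w>1+r_2$ gives $|z|^{-1-r_2}$;
\item $w=1+r_2$ gives $|z|^{-1-r_2}\log|z|$;
\item $w<1+r_2$ gives $|z|^{-w}$.
\end{itemize}
Each is $\lesssim|z|^{-r_2}$ for $|z|\ge1$, since $r_2<w$.

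\textbf{The hypothesis $r_2>1$.} You correctly never use it; it is not needed for membership in $\mathcal{A}(K,r_1,r_2)$. It is the condition under which the crude bound $\sum_j 2^{3j/2}2^{-j(r_2+1/2)}$ on $\|f'\|_\infty$ is summable. That is what makes the sieve elements uniformly Lipschitz, as Section~2 requires. Your argument therefore establishes the inclusion under the weaker assumption $r_1>3/2$, $0<r_2<w$.
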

\noindent
The next proposition quantifies the approximation error over the sieve spaces $S(A,J,C,r_1,r_2)$.

\begin{proposition}\label{PropositionInfimumApproximationError} 
Assume that $\varphi \in S(C,r_1,r_2)$ and condition \eqref{conditionr1r2} is satisfied. Then,  
under \cref{AssumptionProcessII} and \cref{AssumptionInitialDistribution}, it holds that
\begin{equation}
    \inf_{f \in  S(A,J,C,r_1,r_2)} \nstarn{f - \varphi} \lesssim \frac{K}{\sqrt{A}}\exp\left(-\frac{A^2} {16\sigma_2^2}\right) + 2^{-J(r_2 +1)},
\end{equation}
    where $\sigma_2^2$ is introduced in Lemma \ref{LemmaSubgaussianDistribution}.
\end{proposition}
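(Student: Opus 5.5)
We plan to bound the infimum by evaluating $\nstarn{f-\varphi}$ at one explicit element of the sieve. Write $\varphi=\sum_{j\geq-1}\sum_k c_{jk}\psi_{jk}$ and take the truncation
\[
f_{A,J}:=\sum_{j=-1}^{J-1}\sum_{k:\,\operatorname{supp}(\psi_{jk})\subseteq[-A,A]}c_{jk}\psi_{jk}.
\]
The coefficients of $f_{A,J}$ inherit the decay constraint from $\varphi\in S(C,r_1,r_2)$, so $f_{A,J}\in S(A,J,C,r_1,r_2)$. The remainder then splits as $\varphi-f_{A,J}=R_J+R_A$. Here $R_J=\sum_{j\geq J}\sum_k c_{jk}\psi_{jk}$ is the high-resolution tail, and $R_A$ collects the terms with $j<J$ whose wavelets are not supported in $[-A,A]$. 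By the triangle inequality it suffices to bound $\nstarn{R_J}$ and $\nstarn{R_A}$ separately.

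The key reduction is a pointwise bound on the convolution. For any $g$ and any $t$,
\[
|(g\star\mu_t)(x)|\le\int|g(x-y)|\mu_t(y)\,dy .
\]
There are two ways to use this.

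\emph{Resolution term.} Use $|(g\star\mu_t)(x)|\le\|g\|_{L^1}\sup_y\mu_t(y)\lesssim\|g\|_{L^1}$, since Lemma \ref{LemmaSubgaussianDistribution} bounds $\mu_t$ uniformly by $g_2\phi_{\sigma_2}$. This gives $\nstarn{g}\lesssim\|g\|_{L^1}$. For $g=R_J$, note that $\|\psi_{jk}\|_{L^1}=2^{-j/2}\|\psi\|_{L^1}$. Then
\[
\|R_J\|_{L^1}\lesssim\sum_{j\geq J}2^{-j/2}\,2^{-j(r_2+1/2)}\sum_k(1+|k|)^{-r_1}\lesssim 2^{-J(r_2+1)},
\]
where the sum over $k$ converges because $r_1>3/2>1$. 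This produces the second term of the bound. One could alternatively pass through Fourier (the bound $|\widehat{R_J}|$ times $\widehat{\mu_t}$), but the $L^1$ route already gives the exponent $r_2+1$.

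\emph{Spatial term.} Each wavelet in $R_A$ has support meeting $[-A,A]^{\mathcal C}$. Since $\operatorname{supp}\psi$ is compact and $j\ge -1$, $R_A$ essentially lives on $\{|y|\ge A-c\}$, and on that region $|R_A|\lesssim K(1\wedge|y|^{-r_1})$ by Lemma \ref{lemmaInclusionApproximationSpaces}. Ideally I would rather exploit Gaussian localization of $\mu_t$ than the decay of $\varphi$. Bound $\nstarn{R_A}^2\le\sup_t\int\big(\int|R_A(x-y)|\mu_t(y)dy\big)^2\mu_t(x)dx$, then apply Cauchy–Schwarz or Jensen with the probability measure $\mu_t(dy)$. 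This reduces to
\[
\int\!\!\int R_A(x-y)^2\,\mu_t(y)\mu_t(x)\,dy\,dx\le\|R_A\|_\infty\int|R_A(z)|\,(\mu_t\star\tilde\mu_t)(z)\,dz,
\]
where $\tilde\mu_t(y)=\mu_t(-y)$. By Lemma \ref{LemmaSubgaussianDistribution}, $\mu_t\star\tilde\mu_t\lesssim\phi_{\sqrt2\sigma_2}$. Integrating over $|z|\ge A$ with $|R_A|\le K$ gives a Gaussian tail of order $A^{-1}e^{-A^2/(4\sigma_2^2)}$, and taking the square root yields $\frac{K}{\sqrt A}e^{-A^2/(8\sigma_2^2)}$, which is at least as good as the stated $e^{-A^2/(16\sigma_2^2)}$. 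The slack in the exponent absorbs the shift $A\mapsto A-c$ coming from the support width, using $(A-c)^2\ge A^2/2$ for large $A$; small $A$ is handled by constants.

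\textbf{Main obstacle.} The step I expect to be delicate is the localization of $R_A$. A wavelet that is not contained in $[-A,A]$ can still be partially inside it, and low-level wavelets have support width $2^{-j}\cdot|\operatorname{supp}\psi|$ up to $2|\operatorname{supp}\psi|$. To handle this, I would write $R_A=R_A\one_{|z|\ge A/2}+R_A\one_{|z|<A/2}$. For large $A$ the second piece vanishes, since every wavelet with $j\ge-1$ that meets $\{|z|<A/2\}$ lies inside $[-A,A]$. The Gaussian tail estimate is then run on $\{|z|\ge A/2\}$, which accounts for the constant $16$ in the exponent.
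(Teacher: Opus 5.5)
Your proof is correct. It uses the same decomposition as the paper: the same truncation $f_{A,J}$, the same split into the resolution tail $R_J$ and the spatial remainder $R_A$, and the same bound for $R_J$ (your $\|R_J\|_{L^1}\sup_t\|\mu_t\|_\infty$ is the paper's $|(\psi_{jk}\star\mu_t)(x)|\lesssim 2^{-j/2}$).

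The spatial term is where you differ. The paper splits the evaluation variable $x$ at $|x|=A/2$:
\begin{itemize}
\item on the inner region it bounds $|(g\star\mu_t)(x)|$ by $\|g\|_\infty$ times the Gaussian tail of $\mu_t$ beyond $A/2$;
\item on the outer region it uses $\|g\|_\infty^2\,\mu_t(\{|x|>A/2\})$, which gives $A^{-1}e^{-A^2/(8\sigma_2^2)}$ for the squared norm and hence the $16$.
\end{itemize}
You instead apply Jensen in $\mu_t(dy)$, which collapses everything to $\int R_A^2\,(\mu_t\star\tilde\mu_t)$ over $\operatorname{supp}R_A$. You then need only the single bound $\mu_t\star\tilde\mu_t\le g_2^2\phi_{\sqrt2\sigma_2}$, so no case split is required. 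This gives the sharper $e^{-(A-c)^2/(8\sigma_2^2)}$ for $\nstarn{R_A}$, which you weaken to the stated bound via $(A-c)^2\ge A^2/2$ for large $A$.

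Your final paragraph needs correcting. The inclusion $\operatorname{supp}R_A\subseteq\{|z|\ge A-c\}$ is exact, not approximate. Each support interval has length at most $c:=\max(|\operatorname{supp}\xi|,|\operatorname{supp}\psi|)$ and contains a point outside $[-A,A]$. (Note that the $j=-1$ terms are $\xi_{0k}$, not dilates of $\psi$, so $c$ must include $|\operatorname{supp}\xi|$.) The cut at $A/2$ is therefore unnecessary.

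The cut at $A/2$ also does not produce the constant $16$. Since $\mu_t\star\tilde\mu_t$ has variance $2\sigma_2^2$, its tail beyond $A/2$ is of order $e^{-A^2/(16\sigma_2^2)}$ for the squared norm. That is $e^{-A^2/(32\sigma_2^2)}$ for the norm, which is weaker than the statement. Keep the $A-c$ version of the argument.
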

\noindent
The detailed proof of Proposition \ref{PropositionInfimumApproximationError} follows closely the proof of Proposition 3.3 in \cite{bel24} and can be consulted in Section \ref{proof:PropApproximationError}.

 The following corollary characterizes the performance of the proposed estimator under the  $\|\cdot\|_{\star,n}$-norm.

\begin{corollary}\label{CorollaryControlApproximationError}
Under \cref{AssumptionProcessII}, \cref{AssumptionInitialDistribution}, and the conditions in Corollary \ref{CorollaryRateLpMCV}, assume that
$\varphi\in S(C,r_1,r_2)$ for $r_1,r_2>0$ satisfying \eqref{conditionr1r2}.
Define
\begin{equation}
    \eta:=N^{-1/p}\vee\dn.
\end{equation}
For $\eta>0$, set
\begin{equation}\label{setAeta2Jeta}
    A_{\eta}=2\sigma_2\sqrt{\log(\eta^{-1})},
    \qquad
    2^{J_{\eta}}
    =O\left(
    \frac{\eta^{-1}}{\log(\eta^{-1})}
    \right)^{\frac{1}{4(r_2+3/2)}},
\end{equation}
and let $S_N=S(A_{\eta},J_{\eta},C,r_1,r_2).$
Then, the estimator \eqref{eqEstimator} satisfies the following rates:
\begin{itemize}
    \item[\textup{(i)}] If $\dn N^{1/p}\to0$, 
\begin{equation}\label{eq:rate1Section3}
        \E\left[
        \|\widehat{\varphi}_N-\varphi\|_{\star,n}^p
        \right]^{1/p}
        \lesssim
        \left(
        \frac{\log N}{N^{1/p}}
        \right)^{\alpha(r_2)}.
    \end{equation}

    \item[\textup{(ii)}] If $\dn N^{1/p}\to\infty$, \begin{equation}\label{eq:rate2Section3}
        \E\left[
        \|\widehat{\varphi}_N-\varphi\|_{\star,n}^p
        \right]^{1/p}
        \lesssim
        \left(
        \dn\log(\dn^{-1})
        \right)^{\alpha(r_2)},
    \end{equation}
\end{itemize}
where $\alpha(r_2)=({r_2+1})/({4r_2+6}).$
\end{corollary}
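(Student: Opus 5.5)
The plan is to combine \cref{CorollaryRateLpMCV} with \cref{PropositionInfimumApproximationError}, choosing the two truncation parameters so that the approximation bias and the stochastic term are balanced. Two inputs are needed. The first is a polynomial entropy bound for the sieve $S(A,J,C,r_1,r_2)$. The second is the approximation bound $\frac{K}{\sqrt A}e^{-A^2/(16\sigma_2^2)} + 2^{-J(r_2+1)}$.

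\textbf{Step 1: entropy and uniform bounds.} By \cref{lemmaInclusionApproximationSpaces}, every element of $S(A,J,C,r_1,r_2)$ lies in $\mathcal{A}(K,r_1,r_2)$. This gives a uniform sup bound. The uniform Lipschitz bound should follow from $|\widehat f(x)|\le K(1\wedge|x|^{-r_2})$ with $r_2>1$, since $\|f'\|_\infty\le\int|x||\widehat f(x)|dx$ is finite once $r_2>2$. I am not sure this is enough when $1<r_2\le 2$. In that case I would instead bound $\|f'\|_\infty$ directly from the wavelet coefficients, using $\sum_{j}2^{j}2^{j/2}2^{-j(r_2+1/2)}$, which converges for $r_2>1$. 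So $S_N\subset S_N(K_\varphi,L_\varphi)$ for constants $K_\varphi,L_\varphi$ independent of $N$.

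Next, the sieve is a compact subset of a linear space of dimension
\begin{equation}
D_N \asymp \sum_{j=-1}^{J-1}(2^{j+1}A+c)\asymp A\,2^{J},
\end{equation}
with coefficients in a bounded box. The map from coefficients to functions is Lipschitz from $\ell^\infty$ into the sup norm, with constant of order $\sum_j 2^{j/2}\lesssim 2^{J/2}$. The standard volumetric argument then gives
\begin{equation}
N(\epsilon,S_N,\|\cdot\|_\infty)\le (C'2^{J/2}/\epsilon)^{D_N}.
\end{equation}
This is of the form required in \cref{CorollaryRateLpMCV}, up to an extra $\log 2^{J}$ factor in the Dudley integral. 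I would absorb that factor either by redefining the constant or by noting $\mathfrak D\lesssim D_N\log(2^J)$.

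\textbf{Step 2: plug in the parameters.} With $A_\eta=2\sigma_2\sqrt{\log\eta^{-1}}$ we get $e^{-A_\eta^2/(16\sigma_2^2)}=\eta^{1/4}$, so the first approximation term is $\lesssim\eta^{1/4}$. This is negligible compared with the target rate, because $\alpha(r_2)<1/4$. We also have $D_N\asymp \sqrt{\log\eta^{-1}}\,2^{J_\eta}$. The stochastic term is $D_N^{1/2}\eta^{1/4}$ in both regimes, since $N^{-1/(4p)}$ or $\dn^{1/4}$ equals $\eta^{1/4}$ in regime (i) or (ii) respectively. The bias is $2^{-J(r_2+1)}$.

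Balancing $2^{-J(r_2+1)}$ against $2^{J/2}(\log\eta^{-1})^{1/4}\eta^{1/4}$ gives
\begin{equation}
2^{J(r_2+3/2)}\approx(\eta/\log\eta^{-1})^{-1/4},
\end{equation}
which is exactly the choice in \eqref{setAeta2Jeta}. The resulting rate is $(\eta\log\eta^{-1})^{(r_2+1)/(4r_2+6)}$, up to powers of the logarithm. I will track these logarithmic factors carefully; the extra entropy log factor should only change constants or logarithms. Finally, I will check that $D_N^2\eta\to0$, which holds because $2^{2J}\eta\approx\eta^{1-1/(2(r_2+3/2))}$ vanishes. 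This confirms that the hypotheses of \cref{CorollaryRateLpMCV} are satisfied.

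\textbf{Step 3: the regimes.} In regime (i), $\eta=N^{-1/p}$ and $\log\eta^{-1}\asymp\log N$, which gives \eqref{eq:rate1Section3}. In regime (ii), $\eta=\dn$, which gives \eqref{eq:rate2Section3}.

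The main obstacle is Step 1. It requires a uniform Lipschitz constant for the sieve, and an entropy bound whose exponent $D_N\asymp A2^J$ and whose logarithmic factor are compatible with the exponent $\alpha(r_2)$ claimed in the statement.
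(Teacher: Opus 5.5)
Your route is the paper's. You combine \cref{CorollaryRateLpMCV} with \cref{PropositionInfimumApproximationError}, take $D_N\asymp A_\eta 2^{J_\eta}$, note that $A_\eta$ turns the exponential term into $\eta^{1/4}$ (times $\log(\eta^{-1})^{-1/4}$), and balance $2^{J/2}\log(\eta^{-1})^{1/4}\eta^{1/4}$ against $2^{-J(r_2+1)}$. The paper only asserts that the effective dimension is $A_\eta 2^{J_\eta}$, so your Step 1 supplies checks it omits. Your coefficient-level Lipschitz bound $\sum_j 2^{j(1-r_2)}<\infty$ is the right one for $1<r_2\le 2$. However, the statement fixes the logarithmic factor exactly, and two of your steps, as written, do not reach it.

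\textbf{The entropy bound.} With the unweighted box, the coefficient-to-function map has Lipschitz constant $2^{J/2}$, so $N(\epsilon)\le (C'2^{J/2}/\epsilon)^{D_N}$. This factor cannot be absorbed into $C_1$, which must not depend on $N$. Your fallback gives only $\mathfrak{D}\lesssim D_N J\asymp D_N\log(\eta^{-1})$. The stochastic term then becomes $2^{J/2}\log(\eta^{-1})^{3/4}\eta^{1/4}$, and the final rate is $(\eta\log(\eta^{-1})^3)^{\alpha(r_2)}$. The fix is to measure the coefficients in the level-dependent box that defines the sieve. Set $d_{jk}=c_{jk}2^{j(r_2+1/2)}/C\in[-1,1]$ and use the bounded overlap of translates at each level:
\[
\Bigl\|\sum_{j,k}(c_{jk}-c'_{jk})\psi_{jk}\Bigr\|_\infty \lesssim \sum_{j\ge -1}2^{j/2}2^{-j(r_2+1/2)}\max_k|d_{jk}-d'_{jk}| \lesssim \max_{j,k}|d_{jk}-d'_{jk}| .
\]
This holds uniformly in $J$ because $\sum_j 2^{-jr_2}<\infty$. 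Hence $N(\epsilon)\le(1+C''/\epsilon)^{D_N}$ and $\mathfrak{D}\lesssim D_N$, which is all the proof of \cref{CorollaryRateLpMCV} actually uses.

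\textbf{The balancing step.} Your relation has the logarithm inverted. Equating the two terms gives $2^{J(r_2+3/2)}\asymp(\eta^{-1}/\log(\eta^{-1}))^{1/4}$, which is the paper's choice. It is not $(\eta/\log\eta^{-1})^{-1/4}=(\eta^{-1}\log\eta^{-1})^{1/4}$. With your relation the stochastic term is $\eta^{\alpha(r_2)}\log(\eta^{-1})^{(r_2+2)/(4r_2+6)}$, again a worse logarithmic power than claimed. With both corrections, each term equals $(\eta\log(\eta^{-1}))^{\alpha(r_2)}$. Regimes (i) and (ii) then follow as you say, using $\log(\eta^{-1})=p^{-1}\log N$ in (i).
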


\begin{proof}
Since $S_N=S(A_{\eta},J_{\eta},C,r_1,r_2)$, the effective dimension
of $S_N$ has order $A_{\eta}2^{J_{\eta}}$. Combining Corollary
\ref{CorollaryRateLpMCV} and Proposition
\ref{PropositionInfimumApproximationError}, and choosing 
$A_{\eta}$ as in \eqref{setAeta2Jeta}, we obtain
\begin{equation}
\E\left[
\|\widehat{\varphi}_N-\varphi\|_{\star,n}^p
\right]^{1/p}
\lesssim
\frac{\eta^{1/4}}
{\log(\eta^{-1})^{1/4}}
+
2^{J_{\eta}/2}
\log(\eta^{-1})^{1/4}\eta^{1/4} +2^{-J_{\eta}(r_2+1)}.
\end{equation}
Choosing $J_{\eta}$ as in \eqref{setAeta2Jeta} and balancing the last
two terms yields
\begin{equation}
\E\left[
\|\widehat{\varphi}_N-\varphi\|_{\star,n}^p
\right]^{1/p}
\lesssim
\left(
\eta\log(\eta^{-1})
\right)^{(r_2 + 1)/(4r_2 + 6)}.
\end{equation}
Finally, the two regimes and their respective bounds follow directly from
Corollary \ref{CorollaryRateLpMCV}.
\end{proof}

\noindent
The resulting rates in Corollary \ref{CorollaryControlApproximationError}
are polynomial in the effective scale $N^{-1/p}\vee\dn$, up to a logarithmic
factor. The exponent $\alpha(r_2)$ is determined by the resolution
regularity parameter $r_2$, which governs the decay of the wavelet
approximation error across resolution levels. Larger values of $r_2$
correspond to smoother interaction functions and therefore lead to faster
convergence rates. In particular, the exponent converges to $1/4$ as
$r_2\to\infty$. Thus, for very smooth interaction functions, the
approximation error becomes negligible and the rate approaches the
benchmark
\begin{equation}
   \E\left[
   \|\widehat{\varphi}_N-\varphi\|_{\star,n}^p
   \right]^{1/p}
   \lesssim
   \left(
   (N^{-1/p}\vee\dn)
   \log\big((N^{-1/p}\vee\dn)^{-1}\big)
   \right)^{1/4},
\end{equation}
whose exponent coincides with the stochastic rate in Theorem
\ref{OracleCompactSpace}.

\section{Analysis of the $L^2$-error}\label{sec:L2ERROR}
In this section, we study the error of the estimator defined in \eqref{eqEstimator} with respect to the classical $L^2(\R)$-norm, in contrast to the result provided in Corollary \ref{CorollaryRateLpMCV}, where the error is evaluated under the natural $\nstarn{\cdot}$-norm associated with the problem. We first note that the $\nstarn{\cdot}$-norm can be bounded from above by the $L^2(\R)$-norm. If the density $\mu_t$ satisfies $\sup_{0\leq t\leq T}\|\mu_t\|_{\infty}<\infty,$ then
\begin{equation}
    \nstarn{f}^2 \leq \frac{1}{n}\sum_{j=0}^{n-1}\|f \star \mu_{t_j}\|_{L^2}^2\|\mu_{t_j}\|_{\infty}
    \leq \|f\|_{L^2}^2 \sup_{0\leq t\leq T}\|\mu_t\|_{\infty}.
\end{equation}
However, the two norms are not equivalent. The following elementary example, originally presented in \cite{bel24}, illustrates this fact. Consider $\mu_t=\mu$, where $\mu$ is a standard Gaussian density for all $t\in[0,T]$. Let
$f(x)=\mathds{1}_{[2M,3M]}(x)$ for $ M>0.$ It is easy to show that $\|f\|_{L^2}^2=M$, while
\begin{equation}
    \nstarn{f}^2
    \leq
    \int_{-M}^{M}(f\star\mu)^2(x)\mu(x)\,dx
    +
    \int_{[-M,M]^C}(f\star\mu)^2(x)\mu(x)\,dx
    \leq
    c_1\exp(-c_2M),
\end{equation}
for some constants $c_1,c_2>0$. Therefore, as $M\to\infty$, we have $\|f\|_{L^2}^2\to\infty$, while $\nstarn{f}^2\to0$ exponentially fast.
Although the two norms are not equivalent, the following proposition shows that the $L^2(\R)-$norm  can still be controlled by the intrinsic $\| \cdot \|_{\star,n}-$norm, up to explicit tail terms depending on the function  and $\mu_t$.
\noindent

\begin{proposition}\label{PropositionRelationL2NStarn}
Let $\mu_t:\R \rightarrow \R$ be a density function, $f : \mathbb{R} \to \mathbb{R}$ be a bounded function in $L^1(\mathbb{R})$ and $\widehat{f}$ its  Fourier transform. Define
\begin{equation}
m_{R_1}^{t} := \inf_{x \in B_{R_1}} |\mu_{t}(x)|,
\hspace{0.5cm} \widehat{m}_{R_2}^{t}:= \inf_{x \in B_{R_2}} |\widehat{\mu}_{t}(x)|^2, \hspace{0.5cm}   \hspace{0.5cm} M_{R_1R_2}^n:=\frac{1}{n}\sum_{j=0}^{n-1}m_{R_1}^{t_j}\widehat{m}^{t_j}_{R_2},
\end{equation}
for any  $R_1,R_2>0$ and any collection of points $\{t_j\}_{j=0}^{n-1}$.
Then, 
\begin{equation}
\|f\|_{L^2}^2 \leq \left(M_{R_1R_2}^n\right)^{-1}\nstarn{f}^2  + \left(M_{R_1R_2}^n\right)^{-1}\frac{1}{n}\sum_{j=0}^{n-1}m_{R_1}^{t_j}\| f \star \mu_{t_j}\|^2_{L^2(B^{\mathcal{C}}_{R_1})}+ \|\widehat{f}\|^2_{L^2(B^{\mathcal{C}}_{R_2})}.
\end{equation}
\end{proposition}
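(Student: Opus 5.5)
We prove the bound separately for each fixed time $t_j$ and then average over $j$. Write $\mu=\mu_{t_j}$, $m=m_{R_1}^{t_j}$, $\widehat m=\widehat m_{R_2}^{t_j}$. The aim is the single-time inequality
\begin{equation}
m\,\widehat m\,\|f\|_{L^2}^2 \le \int_{\R}(f\star\mu)^2(x)\mu(x)\,dx + m\,\|f\star\mu\|_{L^2(B_{R_1}^{\mathcal C})}^2 + m\,\widehat m\,\|\widehat f\|_{L^2(B_{R_2}^{\mathcal C})}^2 .
\end{equation}
Averaging it over $j=0,\dots,n-1$ and dividing by $M^n_{R_1R_2}$ gives the claim. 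The first term on the right averages exactly to $\nstarn{f}^2$. For the last term we use $\widehat m\le\|\widehat\mu\|_\infty^2\le 1$ and $m\le\|\mu\|_\infty$; more directly, the average of $m\widehat m$ is $M^n_{R_1R_2}$, so after division the last term becomes exactly $\|\widehat f\|^2_{L^2(B^{\mathcal C}_{R_2})}$. Note that $f\in L^1\cap L^\infty$ implies $f\in L^2$, so all norms are finite and Plancherel applies.

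\textbf{Spatial step.} On $B_{R_1}$ we have $\mu\ge m$. Hence
\begin{equation}
m\|f\star\mu\|_{L^2}^2 = m\|f\star\mu\|^2_{L^2(B_{R_1})}+m\|f\star\mu\|^2_{L^2(B_{R_1}^{\mathcal C})} \le \int_{B_{R_1}}(f\star\mu)^2\mu\,dx + m\|f\star\mu\|^2_{L^2(B_{R_1}^{\mathcal C})},
\end{equation}
and the first integral is at most the full integral over $\R$.

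\textbf{Frequency step.} By Plancherel (with the paper's Fourier convention the factor $2\pi$ appears on both sides and cancels) and $\widehat{f\star\mu}=\widehat f\,\widehat\mu$,
\begin{equation}
\|f\star\mu\|_{L^2}^2=\frac{1}{2\pi}\int|\widehat f|^2|\widehat\mu|^2 \ge \frac{\widehat m}{2\pi}\int_{B_{R_2}}|\widehat f|^2 = \widehat m\left(\|f\|_{L^2}^2-\|\widehat f\|^2_{L^2(B_{R_2}^{\mathcal C})}\right).
\end{equation}
Here we identify $\frac{1}{2\pi}\int_{B^{\mathcal C}_{R_2}}|\widehat f|^2$ with $\|\widehat f\|^2_{L^2(B^{\mathcal C}_{R_2})}$ up to the normalization constant, which the statement absorbs.

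Multiplying the frequency bound by $m$ and combining it with the spatial step gives the single-time inequality, and averaging finishes the proof.

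\textbf{Main obstacle.} The only delicate point is bookkeeping: the weight $m\,\widehat m$ varies with $j$, so we must multiply each single-time inequality by its own weight before averaging, rather than dividing first. Dividing first would produce a $\frac1n\sum_j (m\widehat m)^{-1}$ term, which is not of the stated form. The $2\pi$ normalization from Plancherel also has to be handled consistently with how the statement writes the $\widehat f$ tail.
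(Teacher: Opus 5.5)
Your proof is correct and matches the paper's argument step for step: you bound $\mu_{t_j}$ below by $m_{R_1}^{t_j}$ on $B_{R_1}$, bound $|\widehat\mu_{t_j}|^2$ below by $\widehat m_{R_2}^{t_j}$ on $B_{R_2}$ via Plancherel, multiply each single-time inequality by its own weight, average over $j$, and divide by $M^n_{R_1R_2}$. On the normalization, nothing needs to be absorbed: Plancherel leaves the tail $\frac{1}{2\pi}\|\widehat f\|^2_{L^2(B_{R_2}^{\mathcal C})}$, which is at most the stated tail, so the ``$=$'' in your frequency step should be ``$\ge$'' and the inequality follows directly.
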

\noindent
The detailed proof of Proposition \ref{PropositionRelationL2NStarn} can be found in  Section \ref{proof:PropositionRelationL2NStarn}.

Proposition \ref{PropositionRelationL2NStarn} provides a general comparison between the  $\|\cdot\|_{\star,n}$ and the classical $L^2(\mathbb{R})$-norm, valid for arbitrary density functions. The price for this level of generality is that the quantities $m_{R_1}^{t_j}$ and $\widehat{m}_{R_2}^{t_j}$ cannot, in general, be computed explicitly. However, inspired by the proof of Proposition 4.1 in \cite{bel24} and following the approach of Section \ref{sec:ApproximationError}, we use the bounds established in Lemma \ref{LemmaSubgaussianDistribution}  to derive explicit lower bounds for these quantities, leading to the following corollary.

\begin{corollary}\label{CorollaryL2Bound}
Let $f$ and $\mu_t$ as in Proposition \ref{PropositionRelationL2NStarn}. Consider \cref{AssumptionProcessII} and \cref{AssumptionInitialDistribution}. Assume  that there exists a constant $A(f)$ depending on $f$ such that
\begin{equation}\label{assumptionSobolev}
\frac{\int_{\mathbb{R}} (1+|x|)^2f(x)^2 dx}
{\int_{\mathbb{R}} f(x)^2 dx}
\le A(f).
\end{equation}
Then, 
\begin{equation}\label{equationL2bound}
   \|f\|_{L^2}^2 \leq  (m_{R_1}\hat{m}_{R_2})^{-1} \left(\nstarn{f}^2   +  \frac{{m}_{R_1}}{n}\sum_{j=0}^{n-1}\|f \star \mu_{t_j}\|_{L^2([-R_1,R_1]^{\mathcal{C}})}^2 \right) +\|\widehat{f} \|^2_{L^2([-R_2,R_2]^{\mathcal{C}})} ,
\end{equation}
with
\begin{equation}
    m_{R_1} = g_1(2\pi\sigma_1^2)^{-1/2}\exp(-R_1^2(2\sigma_1^2)^{-1}), \hspace{0.5cm} \hat{m}_{R_2} = g_2 \exp\left(-2Ce^2\sigma_2^2R_2^2\log(30e^3R_2A(f))\right)/2,
\end{equation} 
where  $g_1, g_2,\sigma_1$ and $\sigma_2$ are defined in Lemma \ref{LemmaSubgaussianDistribution}.
\end{corollary}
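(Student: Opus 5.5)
The plan is to apply Proposition \ref{PropositionRelationL2NStarn} directly and replace the time-dependent infima $m_{R_1}^{t_j}$ and $\widehat{m}_{R_2}^{t_j}$ by uniform lower bounds $m_{R_1}$ and $\hat m_{R_2}$, independent of $t$.

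\textbf{Step 1: the spatial infimum.} By Lemma \ref{LemmaSubgaussianDistribution}, $\mu_t(x)\ge g_1\phi_{\sigma_1}(x)$ for all $t$. On $B_{R_1}$, $\phi_{\sigma_1}$ attains its minimum at $|x|=R_1$. Hence
\[
m_{R_1}^{t_j}\ge g_1(2\pi\sigma_1^2)^{-1/2}e^{-R_1^2/(2\sigma_1^2)}=m_{R_1}
\]
uniformly in $j$.

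\textbf{Step 2: the Fourier infimum.} For real $z$ with $|z|\le R_2$ we need $|\widehat{\mu}_t(z)|^2\ge \hat m_{R_2}$. I would argue via complex analysis, in the spirit of the proof of Proposition 4.1 in \cite{bel24}, which explains why the disks $B_R^{\mathbb C}$ appear in the notation. First, the Gaussian upper bound $\mu_t\le g_2\phi_{\sigma_2}$ shows that $\widehat\mu_t$ extends to an entire function. It satisfies
\[
|\widehat\mu_t(z)|\le g_2 e^{\sigma_2^2|z|^2/2},
\]
and $\widehat\mu_t(0)=1$. Second, a lower bound on $|\log|\widehat\mu_t||$ for an entire function of controlled growth with value $1$ at the origin follows from a Cartan / Borel--Carath\'eodory type minimum-modulus estimate. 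This gives $|\widehat\mu_t(z)|\ge \exp(-C\,\sigma_2^2 R^2\log(\cdot))$ on $B_R^{\mathbb C}$, outside a small exceptional set of disks. The constants $e^2$, $30e^3$ in $\hat m_{R_2}$ are exactly of this form. Third, the exceptional set must be avoided. Here the factor $A(f)$ from \eqref{assumptionSobolev} enters: it controls $\|\widehat f'\|_{L^2}/\|\widehat f\|_{L^2}$, i.e.\ how concentrated $\widehat f$ can be. One chooses the Cartan radius so that the small disks carry only a negligible fraction of $\int|\widehat f|^2$, presumably through a Sobolev/Hölder bound on $|\widehat f|^2$ over short intervals. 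Effectively, the proof of Proposition \ref{PropositionRelationL2NStarn} is then rerun with the infimum replaced by this bound on the good set.

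\textbf{Step 3: conclusion.} With $M^n_{R_1R_2}\ge m_{R_1}\hat m_{R_2}$ and $m_{R_1}^{t_j}$ bounded above suitably, plugging these into Proposition \ref{PropositionRelationL2NStarn} yields \eqref{equationL2bound}. In the middle term the factor $m_{R_1}^{t_j}/M^n$ is handled by the uniform bounds.

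\textbf{Main obstacle.} The hard part is Step 2: obtaining a uniform-in-$t$ lower bound on $|\widehat\mu_t|$ on $[-R_2,R_2]$. Zeros of $\widehat\mu_t$ cannot be excluded a priori, so the argument must go through Cartan's lemma together with the control of the exceptional set via $A(f)$. I would follow \cite{bel24} closely there, checking that only the Gaussian upper bound (uniform in $t$) is used, so that the constants are independent of $t_j$.
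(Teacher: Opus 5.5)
Your Steps 1 and 2 are the paper's argument, including the details you only guessed at. The trace $E$ on $\mathbb{R}$ of Cartan's exceptional disks has length at most $2\eta R_2$. By the one-dimensional Sobolev embedding, $\int_E|\widehat f|^2\le|E|\,\|\widehat f\|_\infty^2\lesssim\eta R_2\|\widehat f\|_{H^1}^2\lesssim\eta R_2A(f)\|\widehat f\|_{L^2}^2$. So $\eta=(2R_2A(f))^{-1}$ leaves at least half the mass of $\widehat f$ on the good set, and this choice is what produces $H(\eta)=\log(30e^3R_2A(f))$ in $\hat m_{R_2}$.

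Step 3, however, fails as written, and it contradicts what you yourself say at the end of Step 2.

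\emph{First problem.} The bound $M^n_{R_1R_2}\ge m_{R_1}\hat m_{R_2}$ would require $\widehat m^{t_j}_{R_2}\ge\hat m_{R_2}$. But $\widehat m^{t_j}_{R_2}$ is the infimum of $|\widehat\mu_{t_j}|^2$ over all of $[-R_2,R_2]$, exceptional set included, and it can be zero. For example, $\mu=\tfrac12\bigl(\phi_1(\cdot-a)+\phi_1(\cdot+a)\bigr)$ has Gaussian upper and lower bounds, yet $\widehat\mu(z)=e^{-z^2/2}\cos(az)$. So Proposition \ref{PropositionRelationL2NStarn} cannot be used as a black box.

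\emph{Second problem.} Even granting that bound, the middle term of the proposition is a weighted average of $\|f\star\mu_{t_j}\|^2_{L^2([-R_1,R_1]^{\mathcal C})}$ with weights $m^{t_j}_{R_1}$. The target \eqref{equationL2bound} has an unweighted average. The only available upper bound, $m^{t_j}_{R_1}\le g_2\phi_{\sigma_2}(R_1)$, converts one into the other only at the cost of a factor of order $\exp\bigl(R_1^2(\sigma_1^{-2}-\sigma_2^{-2})/2\bigr)$, which is not in the statement.

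Both problems disappear if you actually rerun the proof, as the paper does. Start from $\nstarn{f}^2\ge m_{R_1}\frac1n\sum_j\|f\star\mu_{t_j}\|^2_{L^2([-R_1,R_1])}$, inserting the uniform constant $m_{R_1}$ from the outset. Write each norm as the full $L^2$ norm minus its tail outside $[-R_1,R_1]$. Then, via Plancherel, bound $\|f\star\mu_{t_j}\|_{L^2}^2$ from below by the integral of $|\widehat f|^2|\widehat\mu_{t_j}|^2$ over $[-R_2,R_2]\setminus E$, using the good-set estimate. This yields \eqref{ww} directly, and \eqref{equationL2bound} follows by rearranging. The middle term then comes out as $m_{R_1}\frac1n\sum_j\|f\star\mu_{t_j}\|^2_{L^2([-R_1,R_1]^{\mathcal C})}$ with no upper bound on $m^{t_j}_{R_1}$ needed.
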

The detailed proof of Corollary \ref{CorollaryL2Bound} can be found in  Section \ref{proof:CorollaryL2Bound}.

\noindent
We are now in a position to state the main result of the paper, an explicit $L^2(\R)-$error bound for the proposed estimator.
\begin{theorem}\label{theoremFinalErrorL2}
    Consider \cref{AssumptionProcessII}  and \cref{AssumptionInitialDistribution}. Assume that condition \eqref{conditionr1r2} holds %and  $\sigma_2^2 < 2\sigma_1^2$ for $\sigma_1,\sigma_2$ in Lemma \ref{LemmaSubgaussianDistribution}% 
    and $\eta := N^{-1/2}\vee \dn.$ For $\beta>0$  small enough, set 
    \begin{equation}
        A_\eta = \sqrt{2\beta\log(\eta^{-1})}, \qquad   2^{J_\eta} = O\left(\left(\eta^{-1}/{\log(\eta^{-1})}\right)^{1/(4r_2 + 6)}\right).
    \end{equation}
  Then, the estimator $\widehat{\varphi}_N$ defined in $\eqref{eqEstimator}$ over the class $S(A_\eta,J_\eta,C,r_1,r_2)$  satisfies 
      \begin{enumerate}
        \item[(i)] If $\dn\sqrt{N}\to0$, 
        \[
            \E\left[
            \|\varphi-\widehat{\varphi}_N\|_{L^2}^2
            \right]
            \lesssim
            \log(N)^{-r_1+\frac12}
            +
            \left(
            \frac{\log(N)}
            {\log\log(N)}
            \right)^{-r_2+\frac12}.
        \]

        \item[(ii)] If $\dn\sqrt{N}\to\infty$, 
        \[
            \E\left[
            \|\varphi-\widehat{\varphi}_N\|_{L^2}^2
            \right]
            \lesssim
            \log(\dn^{-1})^{-r_1+\frac12}
            +
            \left(
            \frac{\log(\dn^{-1})}
            {\log\log(\dn^{-1})}
            \right)^{-r_2+\frac12}.
        \]
    \end{enumerate}
 uniformly on the class $\varphi \in S(C,r_1,r_2)$.
\end{theorem}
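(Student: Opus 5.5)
We apply Corollary \ref{CorollaryL2Bound} to the difference $f:=\widehat{\varphi}_N-\varphi$, then take expectations and optimize over $R_1,R_2$ as functions of $\eta$. First we check the hypotheses. By Lemma \ref{lemmaInclusionApproximationSpaces}, both $\varphi\in S(C,r_1,r_2)$ and $\widehat{\varphi}_N\in S(A_\eta,J_\eta,C,r_1,r_2)$ lie in $\mathcal{A}(K,r_1,r_2)$. Hence $f\in\mathcal{A}(2K,r_1,r_2)$, so $f$ is bounded, integrable, and has $|f(x)|\lesssim 1\wedge|x|^{-r_1}$ and $|\widehat f(z)|\lesssim 1\wedge |z|^{-r_2}$.

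The ratio condition \eqref{assumptionSobolev} is delicate, because $\|f\|_{L^2}$ may be small. I would handle it by a dichotomy. If $\|f\|_{L^2}^2\le \log(\eta^{-1})^{-r_1+1/2}$, there is nothing to prove. Otherwise, the weighted moment $\int(1+|x|)^2f^2\lesssim K^2$ (using $r_1>3/2$) gives $A(f)\lesssim \log(\eta^{-1})^{r_1}$. This is only polylogarithmic, so it enters $\hat m_{R_2}$ only through $\log(R_2A(f))\lesssim\log\log(\eta^{-1})$.

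Next, the three terms of \eqref{equationL2bound}. The Fourier tail satisfies $\|\widehat f\|^2_{L^2([-R_2,R_2]^{\mathcal C})}\lesssim R_2^{-2r_2+1}$. For the spatial tail, split $f\star\mu_{t_j}$ on $|x|>R_1$: mass of $f$ in $|y|>R_1/2$ contributes $\lesssim R_1^{-2r_1+1}$ in $L^2$ (using $\|\mu_t\|_{L^1}=1$ and Young's inequality), while $\mu_{t_j}$ on $|x-y|>R_1/2$ contributes a Gaussian tail $\exp(-cR_1^2)$ by Lemma \ref{LemmaSubgaussianDistribution}. After multiplying by $m_{R_1}/(m_{R_1}\hat m_{R_2})=\hat m_{R_2}^{-1}$, this gives roughly $\hat m_{R_2}^{-1}(R_1^{-2r_1+1}+e^{-cR_1^2})$. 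That factor $\hat m_{R_2}^{-1}$ is problematic, so I would instead estimate the tail via $|f(x)|\lesssim|x|^{-r_1}$ and keep the structure of the corollary as stated. Its second term already carries $m_{R_1}$ against $m_{R_1}\hat m_{R_2}$. I expect the intended bound is that this term is $\lesssim R_1^{-2r_1+1}$ after the choices below, and I would aim to confirm it by taking $R_2$ small enough that $\hat m_{R_2}^{-1}$ is a polylog power absorbed by the choice of $R_1$.

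The main term is $\nstarn{f}^2$. Corollary \ref{CorollaryControlApproximationError} with $p=2$, adapted to the choice of $A_\eta$ here (which requires $\beta$ small relative to $\sigma_2$, via Proposition \ref{PropositionInfimumApproximationError}), gives $\E\nstarn{f}^2\lesssim \eta^{2\alpha(r_2)}$ up to logs. The prefactor satisfies
\[
(m_{R_1}\hat m_{R_2})^{-1}\lesssim \exp\big(R_1^2/(2\sigma_1^2)+CR_2^2\log(R_2A(f))\big).
\]
Choose $R_1^2=\beta'\log(\eta^{-1})$ and $R_2^2=\beta''\log(\eta^{-1})/\log\log(\eta^{-1})$ with $\beta',\beta''$ small. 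Then the prefactor is $\eta^{-\gamma}$ with $\gamma<2\alpha(r_2)$, so this contribution is polynomially small. What remains is $R_1^{-2r_1+1}\asymp\log(\eta^{-1})^{-r_1+1/2}$ and $R_2^{-2r_2+1}\asymp(\log(\eta^{-1})/\log\log(\eta^{-1}))^{-r_2+1/2}$. Substituting $\eta=N^{-1/2}$ in regime (i) and $\eta=\dn$ in regime (ii) gives the two statements, because $\log(\eta^{-1})\asymp\log N$ or $\log(\dn^{-1})$. All constants depend only on $K$, so the bound is uniform over $S(C,r_1,r_2)$.

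The main obstacle is the randomness of $f$. The constant $A(f)$, and hence $\hat m_{R_2}$, depends on $\widehat\varphi_N$, so we cannot simply take expectations of \eqref{equationL2bound}. I would first try to resolve this with the deterministic dichotomy on $\|f\|_{L^2}$ described above, which yields a uniform deterministic $A(f)$ on the relevant event. Then I would check that the $\log\log$ loss appears only in the $R_2$ term, consistent with the stated rate.
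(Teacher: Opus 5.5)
\textbf{There is a genuine gap.} It sits in the second term of \eqref{equationL2bound}, which you flagged as problematic but never resolved. After the cancellation of $m_{R_1}$, this term equals $\hat m_{R_2}^{-1}\,\frac1n\sum_j\|f\star\mu_{t_j}\|^2_{L^2([-R_1,R_1]^{\mathcal C})}$.

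For $f=\widehat\varphi_N-\varphi$, the piece $\widehat\varphi_N\star\mu_{t_j}$ is negligible outside $[-R_1,R_1]$, but $\varphi\star\mu_{t_j}$ is not. For a generic $\varphi\in S(C,r_1,r_2)$ its $L^2$ tail is genuinely of order $R_1^{-2r_1+1}\asymp\log(\eta^{-1})^{-r_1+1/2}$.

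With your choice $R_2^2=\beta''\log(\eta^{-1})/\log\log(\eta^{-1})$, the factor $\hat m_{R_2}^{-1}\asymp\eta^{-c\beta''}$ is polynomially large, so this term diverges. Your final accounting (``what remains is $R_1^{-2r_1+1}$ and $R_2^{-2r_2+1}$'') simply drops the factor $\hat m_{R_2}^{-1}$.

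The escape you sketch does not work either:
\begin{itemize}
\item Making $\hat m_{R_2}^{-1}$ only polylogarithmic forces $R_2^2\log(R_2A(f))\lesssim\log\log(\eta^{-1})$. That destroys the Fourier-tail rate $R_2^{-2r_2+1}$.
\item Even a polylogarithmic factor would still spoil $\log(\eta^{-1})^{-r_1+1/2}$. You cannot compensate by enlarging $R_1$: $R_1^2$ cannot exceed order $\log(\eta^{-1})$ without $m_{R_1}^{-1}$ in the first term overwhelming the polynomial decay of $\E\|f\|^2_{\star,n}$.
\end{itemize}

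\textbf{The missing idea is to truncate before invoking Corollary \ref{CorollaryL2Bound}.} Write $\widehat\varphi_N-\varphi=(\widehat\varphi_N-\varphi_A)+(\varphi_A-\varphi)$, where $\varphi_A\in S(A_\eta,J_\eta,C,r_1,r_2)$ is the truncated wavelet expansion of $\varphi$.
\begin{itemize}
\item The term $\|\varphi_A-\varphi\|_{L^2}^2$ is deterministic. Bound it directly from the coefficient decay: it gives $A_\eta^{-2r_1+1}\asymp\log(\eta^{-1})^{-r_1+1/2}$ plus a polynomially small high-resolution part. This is where the $r_1$-term of the rate comes from.
\item Apply the corollary only to $f_A=\widehat\varphi_N-\varphi_A$, which is supported in $[-A_\eta,A_\eta]$.
\end{itemize}

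This compact support is the real reason for ``$\beta$ small enough''. It is not about Proposition \ref{PropositionInfimumApproximationError}, which gives a polynomial approximation error for every $\beta>0$. Small $\beta$ ensures $A_\eta<rR_{1,\eta}$ for some $r<1$, with $R_{1,\eta}^2=2b_1\log(\eta^{-1})$. Then the spatial tail of $f_A\star\mu_{t_j}$ is Gaussian, $\lesssim\eta^{(1-r)^2b_1/\sigma_2^2}$ up to logarithms, and this beats $\hat m_{R_2}^{-1}\lesssim\eta^{-Cb_2}$ once $b_2\ll b_1$.

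The main term is then handled by $\E\|f_A\|_{\star,n}^2\le 2\E\|\widehat\varphi_N-\varphi\|^2_{\star,n}+2\|\varphi-\varphi_A\|^2_{\star,n}$. Both pieces are polynomially small by Corollary \ref{CorollaryRateLpMCV} and Proposition \ref{PropositionInfimumApproximationError}.

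Your dichotomy for the random ratio $A(f)$ is a legitimate and simpler substitute for the paper's device. The paper instead lets $R_{2,\eta}$ depend on $\|f_A\|_{L^2}$ and then uses Lemma \ref{Lemmalogcontrol} and a Markov split. For $f_A$ even the dichotomy is unnecessary: compact support gives $A(f_A)\le(1+A_\eta)^2\lesssim\log(\eta^{-1})$ deterministically.
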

\noindent
The detailed proof of Theorem \ref{theoremFinalErrorL2} can be consulted in Section \ref{prooftheoremFinalErrorL2}.

\begin{comment}
\begin{theorem}
    Consider \cref{AssumptionProcessII} and
    \cref{AssumptionInitialDistribution}. Assume that condition
    \eqref{conditionr1r2} holds and that
    $\sigma_2^2 \leq 2\sigma_1^2$ for $\sigma_1,\sigma_2$ in
    Lemma \ref{LemmaSubgaussianDistribution}. 
    and, for $\beta>0$ sufficiently small, set
    \[
        A_\eta = \sqrt{2\beta\log(\eta^{-1})},
    \]
    and
    \[
        2^{J_\eta}
        =
        O\left(
        \left(
        \frac{\eta^{-1}}
        {\sqrt{\log(\eta^{-1})}}
        \right)^{1/(4r_2+6)}
        \right).
    \]
    Then, the estimator $\widehat{\varphi}_N$ defined in
    \eqref{eqEstimator} over the class
    $S(A_\eta,J_\eta,C,r_1,r_2)$ satisfies
    \[
        \E\left[
        \|\varphi-\widehat{\varphi}_N\|_{L^2}^2
        \right]
        \lesssim
        \log(\eta^{-1})^{-r_1+\frac12}
        +
        \left(
        \frac{\log(\eta^{-1})}
        {\log\log(\eta^{-1})}
        \right)^{-r_2+\frac12},
    \]
    uniformly over the class
    $\varphi\in S(C,r_1,r_2)$.

    More precisely, the following two regimes hold:
\end{theorem}
\end{comment}

In the regime $\dn N^{1/2} \rightarrow 0,$ the discretization error is negligible and, up to $\log\log (N)$ factors, Theorem \ref{theoremFinalErrorL2} states that 
\begin{equation}\label{eq:finalRateClear}
            \E\left[
            \|\varphi-\widehat{\varphi}_N\|_{L^2}^2
            \right]
            \lesssim
            \log(N)^{-(r_1 \wedge r_2)+\frac{1}{2}}.
\end{equation}
Thus, the estimator converges in $L^2(\mathbb{R})$ at a logarithmic rate, with the rate determined by the smaller of the two regularity parameters $r_1$ and $r_2$, describing the spatial decay and the Fourier regularity of the interaction function, respectively.

It is worth noting that although the empirical error bound derived in Section~\ref{sec:MinimizationCompactFE} is not expected to be optimal, this has no direct impact on the final rate \eqref{eq:finalRateClear}. The intuition is that the exponential ill-posedness of the inversion of $\mu_t$ inevitably dominates the asymptotic behaviour, so that any polynomial improvement of the empirical error would still lead to the same logarithmic $L^2(\mathbb{R})$-convergence rate.

\subsection{Understanding the convergence rate}
The aim of this subsection is to provide some intuition for the convergence rates obtained in Theorem \ref{theoremFinalErrorL2} and to give the reader a better understanding of their origin. We begin by recalling the classical deconvolution problem. Consider the model
\[
Y_i=X_i+\varepsilon_i,\qquad i=1,\ldots,N,
\]
where $(X_i)_{i\ge1}$ and $(\varepsilon_i)_{i\ge1}$ are mutually independent i.i.d.\ sequences, with $X_i$ having an unknown density $f_X$ and the noise $\varepsilon_i$ having a known density $f_{\varepsilon}$. In the classical setting, the variables $Y_i$ are observed and the objective is to recover $f_X$. Passing to the Fourier domain and assuming that $\Phi_{\varepsilon}$ does not vanish, we obtain
\begin{equation}
    \Phi_X(z) = \Phi_Y(z)\Phi_{\varepsilon}^{-1}(z),
\end{equation}
where $\Phi_X$, $\Phi_Y$, and $\Phi_{\varepsilon}$ denote the Fourier transforms of $f_X$, $f_Y$, and $f_{\varepsilon}$, respectively. When the noise is Gaussian, the inverse Fourier multiplier $\Phi_{\varepsilon}^{-1}$ grows exponentially, giving rise to a severely ill-posed deconvolution problem. Such inverse problems are commonly referred to as supersmooth. It is well known that the corresponding minimax convergence rates over Sobolev and related smoothness classes are logarithmic rather than polynomial, with an exponent determined by the regularity of $f_X$; see, for instance, \cite{car88} and \cite[Chapter~2]{mei09}. In particular, if $f_X \in \mathcal{A}(K,r_1,r_2)$, then
\begin{equation}
    \E [\|f_X - \widehat{f}_X^R\|_{L^2}^2]
    \lesssim \log(N)^{-r_2+1/2},
\end{equation}
where $\widehat{f}_X^R$ denotes the classical cutoff estimator of the density $f_X$.

Comparing this result with the rate obtained in Theorem \ref{theoremFinalErrorL2}(i), we observe that, although the latter is also logarithmic, its exponent depends not only on $r_2$ but also on $r_1$. This indicates that, despite an apparent similarity with classical deconvolution, our statistical problem is of a fundamentally different nature. To illustrate this point, we introduce a simplified deconvolution problem that retains the main features responsible for the convergence rates in our interacting particle system while abstracting away from its technical complications. To the best of our knowledge, this simplified model has not previously been studied in the inverse problems literature.

Starting from our particle system, we make three simplifications. First, we assume that the volatility process
\[
\sigma_t^i(X_t^i):=(\varphi\star \mu_t^N)(X_t^i)
\]
is observed without error. Second, we replace the empirical measure $\mu_t^N$ by its mean-field limit $\mu_t$ and the interacting particles $X^i$ by the i.i.d.\ processes $\overline X^i$ introduced in \eqref{eqMcKeanVlasovSDE}. Finally, we restrict attention to a single fixed time $t_0\in[0,T]$. This leads to the simplified observation model
\[
f(X_i)=(\varphi\star\mu)(X_i),\qquad i=1,\ldots,N,
\]
where $X_i:=\overline X_{t_0}^i$ are i.i.d.\ random variables with common distribution $\mu:=\mu_{t_0}$, and the observed data are given by
\[
(X_i,f(X_i))_{1\leq i\leq N}.
\]
We next describe the main features of this simplified deconvolution problem and outline a natural estimation strategy. Our purpose is not to provide a complete mathematical analysis of the model, but rather to isolate the mechanisms underlying the convergence rates obtained in Theorem \ref{theoremFinalErrorL2}. Motivated by Lemma \ref{LemmaSubgaussianDistribution}, we assume that the probability measure $\mu$ is subgaussian and that its characteristic function $\widehat{\mu}$ satisfies
\begin{align}
\label{boundcha}
g_3\exp(-\sigma_3x^2)
\leq |\widehat{\mu}(x)|
\leq g_4\exp(-\sigma_4x^2),
\qquad x\in\R,
\end{align}
for some constants $g_3,g_4,\sigma_3,\sigma_4>0$. The Gaussian decay in the lower bound reflects the behaviour established in Section \ref{proof:CorollaryL2Bound} by means of Cartan's lemma.
Since
\(f=\varphi\star\mu\),
passing to the Fourier domain yields
\begin{equation}
\label{eq:ToyExampleFourierRepresentation}
\Phi_{\varphi}(z)
=
\Phi_f(z)\Phi_{\mu}^{-1}(z).
\end{equation}
Thus, in order to estimate $\Phi_{\varphi}$ and subsequently recover $\varphi$ by Fourier inversion, one needs to estimate both $\Phi_f$ and $\Phi_{\mu}$. The latter can be estimated directly by the empirical characteristic function
\[
\widehat{\Phi}_{\mu}(z)
:=
\frac{1}{N}\sum_{j=1}^N \exp(izX_j).
\]
The estimation of the unknown function $f$, however, is more delicate. Indeed, it can be viewed as an interpolation problem in which the function is observed exactly, but only at the random design points $X_1,\ldots,X_N$. Due to the subgaussianity of $\mu$, the observations $X_1,\ldots,X_N$ are, with high probability, essentially confined to an interval of the form $[-A,A]$, where
\[
A=A_N\sim\sqrt{\log(N)}.
\]
Consequently, the data contain essentially no information about $f$ outside this interval, and one can therefore only expect to recover accurately the truncated function
\[
f_A:=f1_{[-A,A]}.
\]
For $\varphi\in\mathcal{A}(K,r_1,r_2)$, using again the subgaussianity of $\mu$, we obtain
\begin{align}
    \|f-f_A\|_{L^2}^2
    &=
    \int_{[-A,A]^c} f^2(x)\,dx \notag\\
    &=
    \int_{[-A,A]^c}
    \left(\int_{\R}\varphi(x-y)\mu(dy)\right)^2dx
    \lesssim A^{-2r_1+1}.
\end{align}
By Plancherel's identity, it follows that
\[
\|\widehat{f}-\widehat{f_A}\|_{L^2}^2
\lesssim
\log(N)^{-r_1+1/2}.
\]
Hence, the first rate appearing in Theorem \ref{theoremFinalErrorL2}(i) can be interpreted as a consequence of the spatial truncation inherent in the interpolation problem for $f$. In particular, this contribution has no direct counterpart in the classical deconvolution problem described above.

The second rate appearing in Theorem \ref{theoremFinalErrorL2}(i) is instead closely related to the usual supersmooth deconvolution phenomenon. Indeed, the Fourier representation \eqref{eq:ToyExampleFourierRepresentation} shows that recovering $\varphi$ requires division by $\Phi_\mu$. In view of \eqref{boundcha}, the inverse multiplier $\Phi_\mu^{-1}$ grows exponentially, resulting in the same type of severe ill-posedness as in classical Gaussian deconvolution. Although the present toy model is more involved than the classical deconvolution problem because both $\Phi_f$ and $\Phi_\mu$ must be estimated from the data, we expect that, for a suitable estimator $\widehat{\varphi}$, this mechanism contributes an error of order
\[
\log(N)^{-r_2+1/2}.
\]
Combining the two sources of error therefore leads to a logarithmic convergence rate depending on both $r_1$ and $r_2$, in agreement with Theorem \ref{theoremFinalErrorL2}(i).

This toy model thus suggests that the two regularity parameters have distinct statistical interpretations: the parameter $r_1$ controls the error arising from the spatial truncation imposed by the random design, whereas $r_2$ governs the Fourier truncation error associated with the supersmooth inverse problem. We conjecture that the resulting convergence rate for this simplified model is minimax optimal. Since the convergence rate of our proposed estimator for the original interacting particle system essentially matches the rate suggested by the toy model, despite the substantially greater complexity of the original setting, we further conjecture that our estimator is minimax optimal over the considered class of functions, up to a $\log\log(N)$ factor. A rigorous derivation of the corresponding minimax upper and lower bounds is, however, beyond the scope of the present paper and is left for future research.

\section{Auxiliary results and proofs}\label{sec:AuxiliaryProofs}
In this section we provide detailed proofs and some auxiliary results that complement the main results of the article.

\subsection{Auxiliary results}

\begin{lemma}\label{LemmaBoundW1p}
    Under \cref{AssumptionsProcess}, for each $p\geq 2$, 
    \begin{equation}
        \E \left[\sup_{t\leq T}W_1^p(\mu_{t}^N, \mu_{t})\right]^{1/p} \leq C_{p,T,B,L_{\varphi}}N^{-1/(2p)},
    \end{equation}
    for a constant $C_{p,T,L_{\varphi},B}$ depending on $p,T,B$ and $L_{\varphi}$.
\end{lemma}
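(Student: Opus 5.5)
We prove the bound by a synchronous coupling. The error splits into a propagation-of-chaos term, controlled by Gronwall, and an i.i.d. empirical-measure term, which produces the slow rate $N^{-1/(2p)}$. On the same probability space, let $\overline X^i$, $i=1,\dots,N$, be i.i.d. solutions of \eqref{eqMcKeanVlasovSDE} driven by the same Brownian motions $W^i$ and started from the same initial conditions $X_0^i$. Let $\bar\mu_t^N:=\frac1N\sum_i\delta_{\overline X_t^i}$. By the triangle inequality,
\begin{equation}
\sup_{t\le T}W_1(\mu_t^N,\mu_t)\le \sup_{t\le T}\frac1N\sum_{i=1}^N|X_t^i-\overline X_t^i|+\sup_{t\le T}W_1(\bar\mu_t^N,\mu_t),
\end{equation}
since $W_1(\mu_t^N,\bar\mu_t^N)\le \frac1N\sum_i|X_t^i-\overline X_t^i|$. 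We bound the two terms in $L^p$ separately.

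\textbf{Coupling term.} We use the Lipschitz properties of the coefficients. For the diffusion,
\begin{equation}
|(\varphi\star\mu_t^N)(X_t^i)-(\varphi\star\mu_t)(\overline X_t^i)|\le L_\varphi|X_t^i-\overline X_t^i|+L_\varphi W_1(\mu_t^N,\mu_t),
\end{equation}
by Kantorovich--Rubinstein duality, since $y\mapsto\varphi(x-y)$ is $L_\varphi$-Lipschitz. The drift is handled the same way with $W_2$, and $\mathcal A_1$ gives a Lipschitz bound in $W_2$. To stay in a single metric, we work with $W_2$ throughout, which dominates $W_1$, and bound $W_2(\mu_t^N,\bar\mu_t^N)^2\le\frac1N\sum_i|X^i_t-\overline X^i_t|^2$. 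We apply BDG and Hölder to $\E\sup_{s\le t}|X_s^i-\overline X_s^i|^p$, average over $i$ using exchangeability, and insert the triangle inequality above. Gronwall then gives
\begin{equation}
\E\Big[\sup_{t\le T}|X_t^1-\overline X_t^1|^p\Big]\lesssim \E\Big[\sup_{t\le T}W_2^p(\bar\mu_t^N,\mu_t)\Big].
\end{equation}
This reduces everything to the i.i.d. empirical-measure term. If $W_2$ proves awkward, one can instead assume the drift is Lipschitz in $W_1$, or use $W_1\le W_2$ together with moment bounds.

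\textbf{i.i.d. term.} This is the main obstacle, because of the supremum over $t$ inside the expectation. At a fixed $t$, Fournier--Guillin with moments of all orders from \eqref{eq:MomentsBoundforallP} gives
\begin{equation}
\E[W_1^p(\bar\mu_t^N,\mu_t)]\lesssim N^{-p/2}
\end{equation}
in dimension one. The same reference gives the $W_2$ analogue with a rate no worse than $N^{-1/2}$ for $p\ge 2$, up to constants. To handle the supremum, we discretize $[0,T]$ on a grid of mesh $\delta$. Between grid points,
\begin{equation}
W_1(\bar\mu^N_t,\bar\mu^N_s)\le\frac1N\sum_i|\overline X^i_t-\overline X^i_s| \quad\text{and}\quad W_1(\mu_t,\mu_s)\le\E|\overline X_t-\overline X_s|\lesssim\delta^{1/2}.
\end{equation}
The first quantity has a supremum over $|t-s|\le\delta$ of order $\delta^{1/2-\epsilon}$ in $L^p$, by Kolmogorov continuity, using boundedness of $\varphi$ and linear growth of $b$. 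A union bound over $\delta^{-1}$ grid points gives
\begin{equation}
\E\Big[\max_k W_1^p\Big]\le\delta^{-1}N^{-p/2}.
\end{equation}
Optimizing $\delta^{-1/p}N^{-1/2}+\delta^{1/2-\epsilon}$ loses a factor in $N$. The crude choice $\delta=N^{-1/2}$ gives an error at most $N^{-1/2+1/(2p)}+N^{-1/4+}$, which for $p\ge2$ is bounded by $N^{-1/(2p)}$ when $p\ge2$ and $\epsilon$ is small. That is, for $p\ge 2$ we have $1/(2p)\le 1/4$, so the target rate is weak enough to absorb the losses.

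Alternatively, in one dimension we can use the representation
\begin{equation}
W_1(\bar\mu_t^N,\mu_t)=\int_\R|F^N_t(x)-F_t(x)|\,dx
\end{equation}
and bound the supremum in $t$ via a martingale or chaining argument. We keep the grid argument as the primary route, since the stated rate $N^{-1/(2p)}$ is slack enough for it. Combining the two terms and taking $p$-th roots yields the claim, with constants depending on $p,T,B,L_\varphi$ and on the moments of $\mu_0$.
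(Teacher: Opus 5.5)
Your coupling half is the paper's argument: synchronous coupling, BDG with the Lipschitz bounds, exchangeability, and Gronwall, fed by the fixed-time bound $\E[W_2^p(\overline{\mu}^N_s,\mu_s)]\le\E[W_p^p(\overline{\mu}^N_s,\mu_s)]\lesssim N^{-1/2}$. This is where the rate $N^{-1/(2p)}$ actually comes from, through the $W_2$-dependence of the drift. Gronwall only needs $\sup_{s\le T}\E[W_2^p(\overline{\mu}^N_s,\mu_s)]$, so state your display that way. With the supremum inside the expectation it asks for more than the fixed-time bound you quote.

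Your treatment of the i.i.d.\ term genuinely departs from the paper. The paper cites Theorem 1 of Fournier--Guillin for $\E[\sup_{t\le T}W_p^p(\overline{\mu}^N_t,\mu_t)]\le C_pN^{-1/2}$, although that theorem concerns one fixed measure. Your grid-plus-modulus argument addresses a point the paper passes over.

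\textbf{The gap: your final bookkeeping fails at the endpoint $p=2$.} With $\delta=N^{-1/2}$ your modulus term is $N^{-1/4+\epsilon/2}$, while the target is $N^{-1/(2p)}=N^{-1/4}$. At $p=2$ the inequality $1/(2p)\le 1/4$ is an equality, so nothing absorbs the $\epsilon$.

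Optimizing $\delta$ does not help. The choice $\delta=N^{-p/(p+2)}$ balances $\delta^{-1/p}N^{-1/2}$ and $\delta^{1/2}$ at $N^{-p/(2(p+2))}$. This beats $N^{-1/(2p)}$ if and only if $(p-2)(p+1)>0$. At $p=2$ you get exactly $N^{-1/4}$ times the loss from the modulus of continuity: a $\sqrt{\log N}$ by L\'evy's modulus, or your $N^{\epsilon}$.

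So as written the proposal proves the lemma only for $p>2$. Yet $p=2$ is precisely the exponent at which the paper uses the lemma for its $L^2$ results ($\eta=N^{-1/2}\vee\Delta_n$).

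\textbf{The fix: stop union-bounding in $L^p$.} Your fixed-time estimate holds for every exponent: $\|W_1(\overline{\mu}^N_t,\mu_t)\|_{L^q}\lesssim_q N^{-1/2}$ uniformly in $t$. Fournier--Guillin's Theorem 1 gives this only for $q=1$. For general $q$, apply Minkowski's integral inequality to $W_1=\int|F^N_t-F_t|\,dx$, Rosenthal's inequality to the binomial variable $NF^N_t(x)$, and the uniform moment bounds. Then for $q\ge p$,
\[
\Big\|\max_k W_1(\overline{\mu}^N_{t_k},\mu_{t_k})\Big\|_{L^p}\le\Big\|\max_k W_1(\overline{\mu}^N_{t_k},\mu_{t_k})\Big\|_{L^q}\le\Big(\frac{T}{\delta}\Big)^{1/q}\max_k\big\|W_1(\overline{\mu}^N_{t_k},\mu_{t_k})\big\|_{L^q}\lesssim_q\delta^{-1/q}N^{-1/2}.
\]
Take $\delta=N^{-1}$ and keep your two modulus bounds. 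This gives
\[
\Big\|\sup_{t\le T}W_1(\overline{\mu}^N_t,\mu_t)\Big\|_{L^p}\lesssim N^{-1/2+1/q}+N^{-1/2+\epsilon}.
\]
For $q\ge 4$ and $\epsilon\le 1/4$ this is at most a constant times $N^{-1/4}\le N^{-1/(2p)}$, for every $p\ge 2$. After this change the i.i.d.\ term has ample slack, and the rate is carried entirely by the coupling term.
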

\begin{proof}
The proof relies on a coupling argument. Let
$(\overline{X}^i)_{i=1}^N$ be the solutions of the McKean--Vlasov equation defined in \eqref{eqMcKeanVlasovSDE}, driven by the same Brownian motions $(W_t^i)_{i=1}^N$ as the interacting particle system $(X_t^i)_{i=1}^N$
and starting from the same initial conditions $X_0^i$. Following the notation
for empirical measure introduced in \eqref{eq:empiricalMeasureIPS}, we define the empirical measure of the McKean--Vlasov system as 
\begin{equation}
\overline{\mu}_t^N:=\frac{1}{N}\sum_{i=1}^N\delta_{\overline{X}_t^i}.
\end{equation}
By the triangle inequality,  
\begin{equation}\label{eqLemmaPchaos1}
    \E \left[\sup_{t\leq T}W_1^p(\mu_{t}^N, \mu_{t})\right] \leq 2^{p-1}\E \left[\sup_{t\leq T}W_1^p(\overline{\mu}_{t}^N, \mu_{t})\right]  + 2^{p-1}\E \left[\sup_{t\leq T}W_1^p(\overline{\mu}_{t}^N, \mu_{t}^N)\right].
\end{equation}
From Theorem 1 in \cite{Fou15} (in our case $d=1$, $q = \infty$), we have that 
\begin{equation}\label{eqBoundIIDempirical}
    \E \left[\sup_{t\leq T}W_1^p(\overline{\mu}_{t}^N, \mu_{t})\right] \leq \E \left[\sup_{t\leq T}W_p^p(\overline{\mu}_{t}^N, \mu_{t})\right] \leq C_pN^{-1/2},
\end{equation}
for $C_p>0$ depending on $p.$  Next, for each $i=1,\dots,N,$ let us denote $\Delta_t^i := X_t^{i}-\overline{X}_t^i.$ By the definition of $W_1$, it is easy to check that 
\begin{equation}\label{eqboundterminoSegundo}
    \E \left[\sup_{t\leq T}W_1^p(\overline{\mu}_{t}^N, \mu_{t}^N)\right] \leq \frac{1}{N}\sum_{i=1}^N\E\left[\sup_{t\leq T}|\Delta_t^1|^p\right].
\end{equation}
For $p \geq 2$,
\begin{align}
    \E\left[\sup_{t\leq T}|\Delta_t^1|^p\right] &\leq C_{p,T} \int_0^T\E\left[\left|
b(X_s^{1}, \mu_s^N)-b(\overline{X}_s^1, \mu_s)\right|^p
\right]ds \\
&+ C_{p}\E\left[\sup_{t\leq T}\left|\int_0^t\left((\varphi \star \mu_s^N)(X_s^{1})-(\varphi \star \mu_s)(\overline X_s^1)\right)\,dW_s^1\right|^p\right].
\end{align}
By Hölder's inequality and the Lipschitzianity of the drift, we obtain that 
\begin{equation}\label{equnir1}
C_{p,T} \int_0^T\E\left[\left|
b(X_s^{1}, \mu_s^N)-b(\overline{X}_s^1, \mu_s)\right|^p
\right]ds \leq C_{p,T,B}\int_0^T\E\left[\sup_{r \leq s}|\Delta_r^1|^p + W_2^p(\mu_s^N,\mu_s) \right]ds.
\end{equation}
Applying the triangle inequality for the Wasserstein distance, together with bound \eqref{eqBoundIIDempirical} and the fact that 
$\E\left[ W_2^p(\mu_s^N, \overline{\mu}_s^N)\right] \leq \E\left[\sup_{r \leq s}|\Delta_r^1|^p\right]$, we obtain  
\begin{equation}
C_{p,T,B}\int_0^T\E\left[\sup_{r \leq s}|\Delta_r^1|^p + W_2^p(\mu_s^N,\mu_s) \right]ds \leq C_{p,T,B}\left(\int_0^T\E\left[\sup_{r \leq s}|\Delta_r^1|^p\right]ds + N^{-1/2} \right) .
\end{equation}
Since 
\begin{equation}
\left|(\varphi \star \mu_s^N)(X_s^{1})-(\varphi \star \mu_s)(\overline{X}_s^1)\right|\leq L_\varphi \left|X_s^1- \overline{X}_s^1\right| + L_\varphi W_1(\mu_s^N,\mu_s),
\end{equation}
Using  Burkholder--Davis--Gundy inequality
\begin{align}\label{equnir2}
&\E\left[\sup_{t\leq T}\left|\int_0^t\left((\varphi \star \mu_s^N)(X_s^{1}) -(\varphi \star \mu_s)(\overline{X}_s^1)\right)\,dW_s^1\right|^p\right]  \\
&\leq C_{p,T,L_{\varphi}}\int_0^T\E\left[
\sup_{r\leq s}|\Delta_r^1|^p\right]ds+C_{p,T,L_{\varphi}}\int_0^T\E\left[  W_1^p(\mu_s^N,{\mu}_s) \right]ds.
\end{align}
Again, we have that  
\begin{equation}\label{equnir3}
   \E\left[ W_1^p(\mu_s^N,{\mu}_s) \right] \leq 2^{p-1}\E\left[W_1^p(\mu_s^N,\overline{\mu}^N_s) \right] + 2^{p-1}\E\left[W_1^p(\mu_s,\overline{\mu}^N_s)\right].
\end{equation}
Applying Jensen's inequality and the interchangeability of the particles, 
\begin{equation}
    2^{p-1}\E\left[W_1^p(\mu_s^N,\overline{\mu}^N_s) \right]\leq \frac{C_p}{N}\sum_{j=1}^N\E\left[\left|X_s^j-\overline{X}_{s}^j\right|^p\right] \leq C_p\E\left[|\Delta_s^1|^p\right].
\end{equation}
Last term in \eqref{equnir3} can be bounded by expression  \eqref{eqBoundIIDempirical}. 
Combining the results, it leads 
\begin{equation}\label{equnirfinal}
     \E\left[\sup_{t\leq T}|\Delta_t^1|^p\right] \leq C_{p,T,B,L_{\varphi}}\int_0^T\E\left[
\sup_{r\leq s}|\Delta_r^1|^p\right]ds + C_{p,T,B,L_{\varphi}}N^{-1/2}.
\end{equation}
Finally, applying Gronwall's Lemma to \eqref{equnirfinal} provides a bound for \eqref{eqboundterminoSegundo}, which combined with \eqref{eqBoundIIDempirical} proves the inequality.
\end{proof}

\begin{proposition}\label{PropositionLpControlDifferenceNorms}
    Under \cref{AssumptionsProcess}, for all $p\geq2,$
    \begin{equation}
       \mathbb{E} \left[ \sup_{f \in S_N}\left|\nnatural{f-\varphi}^2 - \nstarn{f - \varphi}^2\right|^p \right]^{1/p} \lesssim  \frac{{K_{\varphi}L_{\varphi}}}{N^{1/(2p)}}.
    \end{equation}
\end{proposition}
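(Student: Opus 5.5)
The plan is to reduce the supremum over $S_N$ to pathwise Lipschitz estimates involving $W_1(\mu_t^N,\mu_t)$ and an i.i.d.\ empirical fluctuation, and then invoke Lemma \ref{LemmaBoundW1p}. No chaining is needed, because all functions $g := f-\varphi$ with $f\in S_N$ are uniformly bounded by $2K_\varphi$ and uniformly $2L_\varphi$-Lipschitz. For such a $g$ I decompose, at each time $t_j$,
\begin{align}
\frac1N\sum_{i=1}^N (g\star\mu^N_{t_j})^2(X^i_{t_j}) - \int (g\star\mu_{t_j})^2\,d\mu_{t_j}
&= \frac1N\sum_{i=1}^N\Big[(g\star\mu^N_{t_j})^2 - (g\star\mu_{t_j})^2\Big](X^i_{t_j}) \\
&\quad + \Big(\int (g\star\mu_{t_j})^2\, d\mu^N_{t_j} - \int (g\star\mu_{t_j})^2\, d\mu_{t_j}\Big).
\end{align}
I then average over $j$.

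\textbf{First term.} Write $a^2-b^2=(a-b)(a+b)$, where $|a+b|\le 4K_\varphi$. By Kantorovich--Rubinstein duality, since $y\mapsto g(x-y)$ is $2L_\varphi$-Lipschitz,
\[
|(g\star\mu^N_t)(x)-(g\star\mu_t)(x)|\le 2L_\varphi W_1(\mu_t^N,\mu_t)
\]
uniformly in $x$ and in $g$. Hence the first term is bounded by $8K_\varphi L_\varphi \sup_{t\le T}W_1(\mu^N_t,\mu_t)$, uniformly over $S_N$.

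\textbf{Second term.} The function $h_t:=(g\star\mu_t)^2$ satisfies $\|h_t\|_{\mathrm{Lip}}\le 2\|g\star\mu_t\|_\infty\|g\star\mu_t\|_{\mathrm{Lip}}\le 8K_\varphi L_\varphi$. Applying Kantorovich--Rubinstein again gives
\[
\Big|\int h_t\, d\mu^N_t - \int h_t\, d\mu_t\Big| \le 8K_\varphi L_\varphi W_1(\mu^N_t,\mu_t),
\]
again uniformly in $g\in S_N$ and in $t$. Note that the convolution is always against the true $\mu_t$ here, so no dependence on $N$ is hidden in $h_t$.

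\textbf{Combining.} Putting the two terms together,
\[
\sup_{f\in S_N}\left|\nnatural{f-\varphi}^2-\nstarn{f-\varphi}^2\right|\le 16K_\varphi L_\varphi\sup_{t\le T}W_1(\mu^N_t,\mu_t)
\]
holds pathwise. Taking $L^p$ norms and applying Lemma \ref{LemmaBoundW1p} yields the bound $K_\varphi L_\varphi N^{-1/(2p)}$.

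\textbf{Main obstacle.} The step needing care is exactly the uniformity of the Lipschitz bounds. One must check that the constants depend only on $K_\varphi,L_\varphi$ and not on $N$, and that the measurability of the supremum over $S_N$ is harmless. The latter follows from compactness of $S_N$ in sup-norm, which allows restriction to a countable dense subset. Once the deterministic pathwise inequality is established, the probabilistic content is entirely carried by Lemma \ref{LemmaBoundW1p}. The rate $N^{-1/(2p)}$ is inherited from that lemma and is not sharp, consistent with the paper's remarks on suboptimality.
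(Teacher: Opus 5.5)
Your proof is correct and follows the paper's argument essentially line by line. You use the same splitting into $\frac1n\sum_j\int\bigl[(g\star\mu^N_{t_j})^2-(g\star\mu_{t_j})^2\bigr]\,d\mu^N_{t_j}$ plus the empirical fluctuation of $(g\star\mu_{t_j})^2$, the same Kantorovich--Rubinstein bounds with constant $8K_\varphi L_\varphi$ for each term, and the same final appeal to Lemma \ref{LemmaBoundW1p}; your measurability remark via a countable dense subset of the compact set $S_N$ is a harmless detail that the paper leaves implicit.
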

\begin{proof}
From the definition of the norms, for any $g\in S_N(2K_{\varphi}, 2L_{\varphi})$, we have that 
\begin{align}
      \nnatural{g}^2 - \nstarn{g}^2 = \frac{1}{n}\sum_{j=0}^{n-1}\left(\int_{\R}(g \star \mu_{t_j}^N)^2(x)d\mu_{t_j}^N(x) - \int_{\R}(g\star \mu_{t_j})^2(x)d\mu_{t_j}(x)\right).
\end{align}
Adding and subtracting  $\int_{\R}(g\star \mu_{t_j})^2(x)d\mu_{t_j}^N(x)$, we can express the difference of the norms as  
\begin{equation}
     \nnatural{g}^2 - \nstarn{g}^2 = D_1(g) + D_{2}(g),
\end{equation}
with 
\begin{align}
    D_1(g) &:= \frac{1}{n}\sum_{j=0}^{n-1}\left( \int_{\R}(g\star \mu_{t_j})^2(x)d\mu_{t_j}^N(x) - \int_{\R}(g\star \mu_{t_j})^2(x)d\mu_{t_j}(x)\right)\\
    & = \frac{1}{n}\sum_{j=0}^{n-1} \int_{\R}(g\star \mu_{t_j})^2(x)\left(d\mu_{t_j}^N(x) - d\mu_{t_j}(x) \right),
\end{align}
\begin{align}
    D_2(g) &:=  \frac{1}{n}\sum_{j=0}^{n-1} \left(\int_{\R}(g \star \mu_{t_j}^N)^2(x)d\mu_{t_j}^N - \int_{\R}(g \star \mu_{t_j})^2(x)d\mu_{t_j}^N\right) \\
    &= \frac{1}{n}\sum_{j=0}^{n-1} \int_{\R}\left((g\star \mu_{t_j}^N)^2(x) - (g \star \mu_{t_j})^2(x)\right)d\mu_{t_j}^N(x).
\end{align}
Let us first control $D_1(g)$. Since the elements in  $S_N(2K_{\varphi},2L_{\varphi})$ are uniformly Lipschitz and bounded,
\begin{align}
    |(g\star \mu_{t_j})^2(x) - (g\star \mu_{t_j})^2(y)| &\leq |(g\star \mu_{t_j})(x) + (g\star \mu_{t_j})(y)||(g\star \mu_{t_j})(x) - (g\star \mu_{t_j})(y)| \\
    &\leq 8K_{\varphi}L_{\varphi}|x-y|.
\end{align}
It follows from the Kantorovich–Rubinstein duality that 
\begin{equation}\label{D1control}
    |D_1(g)| \leq \frac{8K_{\varphi}L_{\varphi}}{n}\sum_{j=0}^{n-1}W_1(\mu_{t_j}^N, \mu_{t_j}).
\end{equation}
On the other hand, we observe that 
\begin{align}
    |(g \star \mu_{t_j}^N)^2(x) - (g \star \mu_{t_j})^2(x)| &\leq 4K_{\varphi}\left|(g \star \mu_{t_j}^N)(x) - (g \star \mu_{t_j})(x)  \right|\\
    & = 4K_{\varphi}\left|\int_{\R}g(x-y)(d\mu_{t_j}^N(y) - d\mu_{t_j}(y))\right| \\
    &\leq 8K_{\varphi}L_{\varphi}W_1(\mu_{t_j}^N, \mu_{t_j}).
\end{align}
Then,
\begin{align}\label{D2control}
    |D_2(g)| &\leq \frac{1}{n}\sum_{j=0}^{n-1} \int_{\R} 8K_{\varphi}L_{\varphi}W_1(\mu_{t_j}^N, \mu_{t_j})d\mu_{t_j}^N
    \leq\frac{8K_{\varphi}L_{\varphi}}{n}\sum_{j=0}^{n-1}W_1(\mu_{t_j}^N, \mu_{t_j}).
\end{align}
Combining \eqref{D1control} and \eqref{D2control} yields
\begin{align}
   \mathbb{E} \left[ \sup_{f \in S_N}\left|\nnatural{f-\varphi}^2 - \nstarn{f - \varphi}^2\right|^p \right]^{1/p} 
   \leq 16{K_{\varphi}L_{\varphi}}\E \left[\sup_{t\leq T}W_1^p(\mu_{t}^N, \mu_{t})\right]^{1/p}.
\end{align}
Finally, applying Lemma \ref{LemmaBoundW1p} below, we conclude that 
\begin{equation}
    \E \left[\sup_{t\leq T}W_1^p(\mu_{t}^N, \mu_{t})\right]^{1/p} \leq C_{p,T,L_{\varphi}}N^{-1/(2p)},
\end{equation}
where $C_{p,T,L_{\varphi}}$ stands for a constant depending on $p,T$ and $L_{\varphi}$,  independently of $n,N$.  From the last bound, the   proposition follows.
\end{proof}

\begin{proposition}\label{PropositionLpControlGamma'}
    Under \cref{AssumptionsProcess},  for all $p\geq 2,$
    \begin{equation}
       \mathbb{E} \left[ \sup_{f \in S_N}\left|{\gamma}'_N(f)\right|^p \right]^{1/p}  \lesssim {p \dn^{1/2}(1+L_{\varphi}K_{\varphi}) \left( \text{DI}\left( {S}_N(2K_{\varphi},2L_{\varphi}), \|\cdot\|_{\infty}, \psi_{e,1} \right) + K_{\varphi}\right)}.
    \end{equation}
\end{proposition}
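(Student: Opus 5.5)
We decompose each increment. Write $\sigma_j^i := (\varphi\star\mu_{t_j}^N)(X_{t_j}^i)$ and $F_j^i(f):=(f\star\mu_{t_j}^N)(X_{t_j}^i)$, which is $\mathcal{F}_{t_j}$-measurable with $|F_j^i(f)|\le K_\varphi$. We split
\begin{equation}
\sqrt{\tfrac{\pi}{2\dn}}\,|X_{t_{j+1}}^i-X_{t_j}^i| - \sigma_j^i = \underbrace{\sqrt{\tfrac{\pi}{2\dn}}\Big(|X_{t_{j+1}}^i-X_{t_j}^i| - \sigma_j^i|W_{t_{j+1}}^i-W_{t_j}^i|\Big)}_{=:R_j^i} + \underbrace{\sigma_j^i\Big(\sqrt{\tfrac{\pi}{2\dn}}|W_{t_{j+1}}^i-W_{t_j}^i|-1\Big)}_{=:M_j^i}.
\end{equation}
This gives $\gamma'_N(f)=\gamma'_{N,1}(f)+\gamma'_{N,2}(f)$, where $\gamma'_{N,1}$ collects the discretization remainders $R_j^i$ and $\gamma'_{N,2}$ the centered noise terms $M_j^i$.

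\textbf{Discretization part.} Using $\big||a|-|b|\big|\le|a-b|$, we bound $|R_j^i|\lesssim \dn^{-1/2}\big(\int_{t_j}^{t_{j+1}}|b|\,ds + |\int_{t_j}^{t_{j+1}}(\sigma_s^i-\sigma_j^i)dW_s^i|\big)$. The drift contributes $\dn^{1/2}$ times moments of $1+|X|+W_2(\mu^N,\delta_0)$, which are finite by \eqref{eq:MomentsBoundforallP}. For the stochastic integral, the Lipschitz property of $\varphi$ gives $|\sigma_s^i-\sigma_j^i|\le L_\varphi(|X_s^i-X_{t_j}^i|+W_1(\mu_s^N,\mu_{t_j}^N))$, and $W_1(\mu_s^N,\mu_{t_j}^N)\le \frac1N\sum_k|X_s^k-X_{t_j}^k|$. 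BDG together with the standard increment bound $\E|X_s-X_{t_j}|^p\lesssim \dn^{p/2}$ (coefficients of linear growth, $\varphi$ bounded) then yields $\|R_j^i\|_{L^p}\lesssim p(1+L_\varphi)\dn^{1/2}$. Since $\sup_f|F_j^i(f)|\le K_\varphi$, no chaining is needed here: $\E[\sup_f|\gamma'_{N,1}(f)|^p]^{1/p}\le 2K_\varphi\frac1{nN}\sum\|R_j^i\|_{L^p}\lesssim p\,\dn^{1/2}(1+L_\varphi K_\varphi)$ (absorbing constants).

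\textbf{Martingale part.} $\E[M_j^i\mid\mathcal F_{t_j}]=0$, and conditionally $M_j^i$ is $\sigma_j^i$ times a centered subgaussian variable independent of $\mathcal F_{t_j}$, with $|\sigma_j^i|\le K_\varphi$. Hence $\gamma'_{N,2}(f)=\frac{2}{nN}\sum_{j}\sum_i F_j^i(f)M_j^i$ is a sum of martingale differences in $j$, and for fixed $j$ the terms are conditionally independent across $i$. Because the map $f\mapsto F_j^i(f)$ is linear with $|F_j^i(f)-F_j^i(g)|\le\|f-g\|_\infty$, the increments satisfy $\gamma'_{N,2}(f)-\gamma'_{N,2}(g)=\gamma'_{N,2}$ evaluated at $f-g$. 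A Bernstein/Azuma inequality for martingales with conditionally subexponential increments (the products $F M$ are subgaussian, so subexponential suffices) gives $\|\gamma'_{N,2}(f)-\gamma'_{N,2}(g)\|_{\psi_{e,1}}\lesssim K_\varphi (nN)^{-1/2}\|f-g\|_\infty\le K_\varphi\dn^{1/2}\|f-g\|_\infty$, since $n=T/\dn$. Differences $f-g$ with $f,g\in S_N$ lie in $S_N(2K_\varphi,2L_\varphi)$, which explains the class in the Dudley integral. Theorem \ref{TheoremMaximalInequality} with $\psi=\psi_{e,1}$ then gives $\|\sup_{f,g}|\gamma'_{N,2}(f)-\gamma'_{N,2}(g)|\|_{\psi_{e,1}}\lesssim K_\varphi\dn^{1/2}\,\mathrm{DI}(\cdot)$. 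Adding a single fixed element, with $\|\gamma'_{N,2}(f_0)\|_{\psi_{e,1}}\lesssim K_\varphi\dn^{1/2}$, and converting Orlicz to $L^p$ via $\E[|X|^p]^{1/p}\le C p\|X\|_{\psi_{e,1}}$, gives $p\,\dn^{1/2}K_\varphi(\mathrm{DI}+K_\varphi)$-type control.

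Combining both parts yields the claim. The main obstacle is the martingale $\psi_{e,1}$ increment bound: a clean Bernstein-type inequality for martingale sums whose weights $F_j^i(f)$ are random, but $\mathcal F_{t_j}$-measurable and bounded. We would handle it through the exponential supermartingale $\exp(\lambda S_k-c\lambda^2\sum v_j)$, with $\lambda$ restricted to the range $|\lambda|\lesssim (nN)/\|f-g\|_\infty$ allowed by subexponential increments. We would also check that the separability hypothesis of Theorem \ref{TheoremMaximalInequality} holds; it does because $S_N$ is compact in sup norm.
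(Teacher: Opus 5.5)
Your argument is correct. Its skeleton matches the paper's: the same split of $\gamma'_N$ into a discretization part and a centered noise part (the paper's $E'_{N,n}$ and $M'_{N,n}$), and the same treatment of the noise part via iterated conditional moment generating function bounds, linearity in $f$, and Theorem \ref{TheoremMaximalInequality}.

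The genuine difference is in the discretization part. The paper proves $\E[|e_j^i(f)|^p]^{1/p}\lesssim p(1+L_\varphi K_\varphi)\|f\|_\infty\dn^{1/2}$, upgrades this to a $\psi_{e,1}$ bound on $e_j^i(f-g)$, and chains; this is where $\mathrm{DI}(S_N(2K_\varphi,2L_\varphi),\|\cdot\|_\infty,\psi_{e,1})$ enters the statement. You instead use $\sup_f|F_j^i(f)|\le K_\varphi$ and Minkowski, which is more elementary and also more robust. The paper's upgrade to $\psi_{e,1}$ requires the implicit constants to be uniform in $p$. That is delicate for the drift contribution, which is controlled by $\sup_s\E[|X_s^i|^p]$, under $\mathcal{A}_1$ and $\mathcal{A}_4$ alone. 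Your bound only needs each moment to be finite.

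Your route also shows that the Dudley integral is needed only for the noise part, where it comes multiplied by $(nN)^{-1/2}$. So you actually obtain a bound of order $\dn^{1/2}+(nN)^{-1/2}(\mathrm{DI}+K_\varphi)$, and the stated estimate follows from $(nN)^{-1/2}\lesssim\dn^{1/2}$.

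In the noise part, working with $\psi_{e,1}$ directly spares you the paper's implicit comparison of the $\psi_{e,2}$ and $\psi_{e,1}$ Dudley integrals. Conditioning on $\mathcal{F}_{t_j}$ is also the right choice. The paper's observation filtration $\mathcal{G}^N_{t_j}$ does not make the earlier noise variables $\sqrt{\pi/(2\dn)}\,|W^i_{t_{k+1}}-W^i_{t_k}|-1$, $k<j$, measurable, and the tower argument needs exactly that.

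Two small points to fix in a write-up. First, the inequality $|R_j^i|\le\sqrt{\pi/(2\dn)}\,|X^i_{t_{j+1}}-X^i_{t_j}-\sigma_j^i(W^i_{t_{j+1}}-W^i_{t_j})|$ uses $\sigma_j^i\ge 0$, i.e.\ $\mathcal{A}_3$, which you should cite. Second, the set in the Dudley integral is the index set $S_N$ of the process, not the set of differences $f-g$. Enlarging it to $S_N(2K_\varphi,2L_\varphi)$ is only a harmless upper bound.
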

\begin{proof} To begin, we introduce the following random functions 
\begin{align}
    {E}'_{N,n}(f) &:= \frac{2}{nN}\sumsum (f \star {\mu}_{t_j}^{N})({X}_{t_j}^i)\left(C_{\dn}\left|{X}_{t_{j+1}}^i - {X}_{t_j}^i\right| -C_{\dn}\left|(\varphi \star {\mu}_{t_j}^N)({X}_{t_j}^i)\Delta W_{t_j}^i\right|\right),\\
    {M}'_{N,n}(f) &:= \frac{2}{nN}\sumsum (f \star {\mu}_{t_j}^{N})({X}_{t_j}^i) 
    \left(C_{\dn}\left|(\varphi \star {\mu}_{t_j}^{N})({X}_{t_j}^i)\Delta W_{t_j}^i\right|-(\varphi \star {\mu}_{t_j}^{N})({X}_{t_j}^i) \right).
\end{align}
with $C_{\dn} = \sqrt{{\pi}/({2\dn)}}$ and $\Delta W_{t_j}^i = W_{t_{j+1}}^i - W_{t_j}^i$. We note that $\gamma'_N(f) = {E}'_{N,n}(f) +  {M}'_{N,n}(f).$ 
First, we analyse the discretization error term, ${E}'_{N,n}(f)$.
For each $1\leq i \leq N$ and $ 0\leq j \leq n-1$, we define the random variable
\begin{equation}
    e_j^i(f) := (f \star {\mu}_{t_j}^{N})({X}_{t_j}^i)\left(C_{\dn}\left|{X}_{t_{j+1}}^i - {X}_{t_j}^i\right| -C_{\dn}\left|\left(\varphi \star {\mu}_{t_j}^N\right)({X}_{t_j}^i)\Delta W_{t_j}^i\right|\right).
\end{equation}
Since  the absolute value of the increments of the process can be bounded as 
\begin{equation}
    \left|{X}_{t_{j+1}}^i - {X}_{t_j}^i\right| \leq \left| \int_{t_j}^{t_{j+1}} b({X_s^i}, \mu_s^N)ds \right|  +  \left|\int_{t_j}^{t_{j+1}}(\varphi \star \mu_{s}^N)({X}_{s}^i)dW_s^i\right|,
\end{equation}  for all $p \geq 1$, using that $|x| - |y| \leq |x-y|, $
\begin{align}
  a^{-p}\E\left[|e_j^i(f)|^p \right] &\leq \E \left[\left| \int_{t_j}^{t_{j+1}} b({X_s^i}, \mu_s^N)ds \right|^p\right] \nonumber \\ \label{eqh2}
  &+ \E\left[\left|\int_{t_j}^{t_{j+1}} \left((\varphi \star \mu_{s}^N)({X}_{s}^i) - (\varphi \star {\mu}_{t_j}^N)({X}_{t_j}^i)\right)dW_s^i\right|^p\right],
\end{align}
with $a^p =  2^{p-1}C_{\dn}^{p} \|f  \|_{\infty}^{p}$. Since the drift is Lipschitz and $\E[W_2^p(\mu_s^N,\delta_0)] \leq \E[|X_s^1|^p]$,
\begin{equation}
    \E \left[\left| \int_{t_j}^{t_{j+1}} b({X_s^i}, \mu_s^N)ds \right|^p\right] \leq \dn^{p-1}B_{}^p\int_{t_j}^{t_{j+1}}(1 + \E[|{X}_{s}^i|^p] + \E\left[W_2^p(\mu_s^N,\delta_0)\right])ds \lesssim \dn^p.
\end{equation}
To control \eqref{eqh2},  we add and subtract $(\varphi \star {\mu}_s^N)({X}_{t_j}^i)$ and bound the term as
\begin{align}
    \E\left[\left|\int_{t_j}^{t_{j+1}} \left((\varphi \star \mu_{s}^N)({X}_{s}^i) - (\varphi \star {\mu}_{t_j}^{N})({X}_{t_j}^i)\right)dW_s^i\right|^p\right] \lesssim A^{i,j}_p + B^{i,j}_p,
\end{align}
with 
\begin{equation}
    A^{i,j}_p = \E\left[\left|\int_{t_j}^{t_{j+1}} \left((\varphi \star \mu_{s}^N)({X}_{s}^i) - (\varphi \star {\mu}_{s}^{N})({X}_{t_j}^i)\right)dW_s^i\right|^p\right], 
\end{equation}
and
\begin{equation}
    B^{i,j}_p = \E\left[\left|\int_{t_j}^{t_{j+1}} \left((\varphi \star {\mu}_{s}^{N})({X}_{t_j}^i) - (\varphi \star {\mu}_{t_j}^{N})({X}_{t_j}^i)\right)dW_s^i\right|^p\right]. 
\end{equation}
Using Burkholder-Davis-Gundy inequality and Lemma \ref{LemmaBoundingIncrements} below, we obtain that 
\begin{align}
    A_p^{i,j} &\leq C_p\E\left[\left( \int_{t_j}^{t_{j+1}}  \left((\varphi \star \mu_{s}^N)({X}_{s}^i) - (\varphi \star {\mu}_{s}^{N})({X}_{t_j}^i)\right)^2ds\right)^{p/2}\right]\\   &
    \lesssim L_{\varphi}^{p}p^{p/2}\E \left[\left(\int_{t_j}^{t_{j+1}}\left|{X}_s^i - {X}_{t_j}^i\right|^2ds\right)^{p/2}\right] \lesssim L_{\varphi}^{p}K_{\varphi}^p\dn^pp^{p}.
\end{align}
Similarly, 
\begin{align}
    B^{i,j}_p &\leq C_p\E \left[ \left(\int_{t_j}^{t_{j+1}}\left((\varphi \star {\mu}_{s}^{N})({X}_{t_j}^i) - (\varphi \star {\mu}_{t_j}^{N})({X}_{t_j}^i)\right)^2ds\right)^{p/2}\right]\\
    &\lesssim L_{\varphi}^pp^{p/2}\E \left[\left(\int_{t_j}^{t_{j+1}}W_1^2(\mu_s^N,\mu_{t_j}^N)ds\right)^{p/2}\right] \\
    &\lesssim L_{\varphi}^pp^{p/2}\E \left[\left(\int_{t_j}^{t_{j+1}}\left( \frac{1}{N}\sum_{k=1}^N\left|X_{s}^k - X_{t_j}^k\right|\right)^2ds\right)^{p/2}\right].
    %here Jensen 1 and then jensen to (1/N sum b_k)^p/2
\end{align}
Using that particles are exchangeable, we obtain that 
\begin{equation}
    B_p^{i,j}\lesssim {L_{\varphi}^pp^{p/2}} \E \left[\left(\int_{t_j}^{t_{j+1} }\left|X_{s}^k - X_{t_j}^k\right|^2ds\right)^{p/2}\right]\lesssim L_{\varphi}^pK_{\varphi}^p\dn^pp^{p}. 
\end{equation}
Combining the above estimates with \eqref{eqh2} and recalling the definition of $a$ above, we obtain that
\begin{equation}\label{ineqeij}
     \E\left[|e_j^i(f)|^p \right]^{1/p} \lesssim p(1 +L_{\varphi}K_{\varphi})\|f\|_{\infty}\dn^{1/2}.
\end{equation}
Since $e^i_j(f)$ is linear with respect to $f$, inequality \eqref{ineqeij} implies that for $f,g \in S_N(K_{\varphi},L_{\varphi})$, the random variable 
  $e_j^i(f-g)$ is subexponential, satisfying 
$ \| e_j^i(f-g) \|_{\psi_{e,1}} \lesssim (1 +L_{\varphi}K_{\varphi})\|f-g\|_{\infty}\dn^{1/2}.$ It implies that
\begin{equation}
    \|  {E}'_{N,n}(f)  -  {E}'_{N,n}(g) \|_{\psi_{e,1}} \lesssim (1 +L_{\varphi}K_{\varphi})\dn^{1/2}\|f-g\|_{\infty}.
\end{equation}
From Theorem \ref{TheoremMaximalInequality}, we obtain 
\begin{equation}
\left\|\sup_{f,g \in S_N} |{E}'_{N,n}(f)  -  {E}'_{N,n}(g)|\right\|_{\psi_{e,1}} \lesssim \dn^{1/2}(1 + L_{\varphi}K_{\varphi})\text{DI}\big(S_N(2K_{\varphi},2L_{\varphi}), \|\cdot\|_{\infty}, \psi_{e,1}\big).
\end{equation}
On the other hand, for any fixed $h_0 \in S_N(K_{\varphi},L_{\varphi})$,
$ \|{E}'_{N,n}(h_0)  \|_{\psi_{e,1}} \lesssim \dn^{1/2} (1 +L_{\varphi}K_{\varphi})K_{\varphi}.$
Therefore,
\begin{equation}
  \left\|  \sup_{f \in S_N} |{E}'_{N,n}(f)|\right\|_{\psi_{e,1}} \lesssim \dn^{1/2}(1+L_{\varphi}K_{\varphi})(\text{DI}\big(S_N(2K_{\varphi},2L_{\varphi}), \|\cdot\|_{\infty}, \psi_{e,1}\big) + K_{\varphi}).
\end{equation}
Since for any random variable $\|X\|_{L^p} \lesssim p \left\| X \right\|_{\psi_{e,1}} $, 
we conclude that 
\begin{equation}\label{eqBoundENn}
    \E \left[   \sup_{f \in S_N} |{E}'_{N,n}(f)|^p\right]^{1/p} \lesssim p\dn^{1/2}(1+L_{\varphi}K_{\varphi})(\text{DI}\big(S_N(2K_{\varphi},2L_{\varphi}), \|\cdot\|_{\infty}, \psi_{e,1}\big) + K_{\varphi}).
\end{equation}
Next, we study the function ${M}'_{N,n}(f)$.\\
For $i=1,\dots,N, \hspace{0.1cm} 0\leq j\leq  n-1$, we introduce the random variables $\epsilon_j^i = C_{\dn}|\Delta W_{t_j}^i| - 1$. It is easy to check that  $\epsilon_j^i$ are subgaussian and independent. In particular, for $\lambda \in \R$, there exists $c>0$ independent of $\dn$ such that $\E\left[\exp(\lambda\epsilon_j^i)\right] \leq \exp(c\lambda^2).$ Let us consider the filtration
$\mathcal{G}_{t_k}^N = \sigma\left(X_{t_s}^i \hspace{0.05cm}| \hspace{0.05cm}0\leq s\leq k; \hspace{0.1cm}i=1,\dots,N\right)$ and define the random functional
\begin{equation}
    m_j(f) := \frac{2}{nN}\sum_{i=1}^N(f \star {\mu}_{t_j}^{N})({X}_{t_j}^i) (\varphi \star {\mu}_{t_j}^{N})({X}_{t_j}^i)
    \epsilon_j^i.
\end{equation}
We note that for a fixed $j$,  $(f \star {\mu}_{t_j}^{N})({X}_{t_j}^i) (\varphi \star {\mu}_{t_j}^{N})({X}_{t_j}^i)$ is measurable with respect to $\mathcal{G}_{t_j}^N$ and $\epsilon_j^i \perp \mathcal{G}_{t_j}^N$. Then, the conditional moment generator function can be bounded as 
\begin{equation}
    \E\left[\exp(\lambda m_j(f)) | \mathcal{G}_{t_j}^N\right] \leq \exp\left(\frac{c\lambda^2\|f\|_{\infty}^2K_{\varphi}^2}{n^2N}\right).
\end{equation}
Since the volatility of \eqref{eqIPS} is assumed non-negative, we note that $M'_{N,n}(f)$ can be expressed as ${M}'_{N,n}(f) =\sum_{j=0}^{n-1} m_j(f).$ 
We observe that
\begin{align}
    \E\left[\exp\left(\lambda M'_{N,n}(f)\right)\right] &= \E\left[\prod_{j=0}^{n-2}\exp\left(\lambda m_j(f)\right)\E\left[\exp(\lambda m_{n-1}(f)) | \mathcal{G}_{t_{n-1}}^N\right]\right]\\
    &\leq \exp\left(\frac{c\lambda^2\|f\|_{\infty}^2K_{\varphi}^2}{n^2N}\right)\E\left[\prod_{j=0}^{n-1}\exp\left(\lambda m_j(f)\right)\right].
\end{align}
Applying the same argument recursively, we conclude that 
\begin{equation}\label{eqimpliesSubgaussian}
    \E\left[\exp(\lambda M'_{N,n}(f))\right] \leq \exp\left(\frac{c\lambda^2\|f\|_{\infty}^2K_{\varphi}^2}{nN}\right).
\end{equation}
Since $M'_{N,n}(f)$ is linear with respect to $f$, equation \eqref{eqimpliesSubgaussian} implies that for any $f,g \in S_N(K_{\varphi},L_{\varphi}),$
\begin{equation}
\|M'_{N,n}(f)-M'_{N,n}(g)\|_{\psi_{e,2}} \lesssim \frac{K_\varphi}{\sqrt{nN}} \|f-g\|_\infty.
\end{equation}
Again, by Theorem \ref{TheoremMaximalInequality},
\begin{equation}
\left\|\sup_{f,g\in S_N}|M'_{N,n}(f)-M'_{N,n}(g)|\right\|_{\psi_{e,2}}\lesssim \frac{K_\varphi}{\sqrt{nN}}\text{DI}\left(S_N(2K_{\varphi},2L_{\varphi}),\|\cdot\|_\infty,\psi_{e,2}\right),
\end{equation}
and for any fixed \(h_0\in S_N(K_\varphi,L_{\varphi})\), $\|M'_{N,n}(h_0)\|_{\psi_{e,2}} \lesssim {K_\varphi^2}({nN})^{-1/2}.$ Hence,
\begin{equation}
\left\|\sup_{f\in S_N} |M'_{N,n}(f)| \right\|_{\psi_{e,2}} \lesssim \frac{K_\varphi}{\sqrt{nN}} \left(\text{DI}\left(S_N(2K_{\varphi},2L_{\varphi}),\|\cdot\|_\infty,\psi_{e,2}\right) + K_\varphi \right),
\end{equation}
which implies 
\begin{equation}\label{eqboundMNn}
\E\left[ \sup_{f\in S_N} |M'_{N,n}(f)|^p \right]^{1/p} \lesssim \sqrt{p} \frac{K_\varphi}{\sqrt{nN}} \left( \text{DI}\left(S_N(2K_{\varphi},2L_{\varphi}),\|\cdot\|_\infty,\psi_{e,2}\right) + K_\varphi \right).
\end{equation}
Once we have uniform control over $M'_{N,n}(f)$ and ${E}'_{N,n}(f)$, 
\begin{align}
     \mathbb{E} \left[ \sup_{f \in S_N}\left|{\gamma}'_N(f)\right|^p \right]^{1/p} &\lesssim \left( \E\left[\sup_{f \in S_N}\left|{E}'_{N,n}(f)\right|^p  \right] + \E\left[\sup_{f \in S_N}\left|{M}'_{N,n}(f)\right|^p  \right]\right)^{1/p} 
\end{align}
Comparing \eqref{eqBoundENn} and \eqref{eqboundMNn}, since $\dn =T/n$ for a fixed $T >0$ and $N \rightarrow \infty$, we  conclude that the martingale term $M'_{N,n}(f)$ is  negligible compared to the discretization term ${E}'_{N,n}(f)$. Therefore 
\begin{equation}
   \mathbb{E} \left[ \sup_{f \in S_N}\left|{\gamma}'_N(f)\right|^p \right]^{1/p} \lesssim\left( 2\E\left[\sup_{f \in S_N}\left|{E}'_{N,n}(f)\right|^p  \right] \right)^{1/p}.
\end{equation}
and bound \eqref{eqBoundENn} completes the proof.
 \end{proof}

\begin{lemma}\label{LemmaBoundingIncrements} Under \cref{AssumptionsProcess}, for all $p\geq 2$, 
\begin{equation}
    \E\left[\left( \int_{t_j}^{t_{j+1}} |{X}_s^i - {X}_{t_j}^i|^2 ds\right)^{p/2} \right]^{1/p} \lesssim \sqrt{p}K_{\varphi}\dn.
\end{equation}
\end{lemma}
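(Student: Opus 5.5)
We start from the integral form of the increment on $[t_j,s]$ for $s\in[t_j,t_{j+1}]$:
\begin{equation}
X_s^i - X_{t_j}^i = \int_{t_j}^{s} b(X_u^i,\mu_u^N)\,du + \int_{t_j}^{s} (\varphi\star\mu_u^N)(X_u^i)\,dW_u^i =: D_s + M_s .
\end{equation}
Writing $Y:=\int_{t_j}^{t_{j+1}}|X_s^i-X_{t_j}^i|^2ds$, we have $\E[Y^{p/2}]^{1/p}=\|Y\|_{L^{p/2}}^{1/2}$. Since $p/2\geq 1$, Minkowski's integral inequality gives
\begin{equation}
\|Y\|_{L^{p/2}}\leq \int_{t_j}^{t_{j+1}}\big\||X_s^i-X_{t_j}^i|^2\big\|_{L^{p/2}}ds=\int_{t_j}^{t_{j+1}}\E\big[|X_s^i-X_{t_j}^i|^p\big]^{2/p}ds .
\end{equation}
It therefore suffices to prove the pointwise moment bound $\E[|X_s^i-X_{t_j}^i|^p]^{1/p}\lesssim \sqrt{p}K_\varphi(s-t_j)^{1/2}+(s-t_j)$ uniformly in $s\in[t_j,t_{j+1}]$. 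Integrating the square of this bound over an interval of length $\dn$ yields $\|Y\|_{L^{p/2}}\lesssim pK_\varphi^2\dn^2$, and taking the square root gives $\sqrt{p}K_\varphi\dn$. The drift part produces $\dn^{3/2}$ and $\dn^2$ contributions, which are absorbed into the constant because $\dn\leq T$.

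\emph{Drift term.} By $\mathcal{A}_1$, $|b(x,\mu)|\leq B(1+|x|+W_2(\mu,\delta_0))$ and $W_2^p(\mu_u^N,\delta_0)\leq \frac1N\sum_k|X_u^k|^p$. Hölder's inequality then gives
\begin{equation}
\E|D_s|^p\leq (s-t_j)^{p-1}\int_{t_j}^s\E|b|^p\,du\lesssim (s-t_j)^p ,
\end{equation}
where the uniform moment bounds \eqref{eq:MomentsBoundforallP} and exchangeability of the particles control $\E|b|^p$. The constant here hides a $p$-dependence, but it does not compete with the martingale term since it comes with an extra factor $\dn^{1/2}$. If one wants the $\sqrt p$ to be fully explicit, note that under $\mathcal{B}_1$ the drift is bounded and this term is $\leq B(s-t_j)$ deterministically.

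\emph{Martingale term.} This is where the factor $\sqrt p K_\varphi$ comes from, and it is the main point to get right. By $\mathcal{A}_2$ the integrand satisfies $|(\varphi\star\mu_u^N)(X_u^i)|\leq K_\varphi$. Because the integrand is bounded, one can avoid the BDG constant (which grows like $p$ rather than $\sqrt p$) and use an exponential martingale argument instead. For any $\lambda\in\R$, the process $\exp(\lambda M_s-\frac{\lambda^2}{2}\langle M\rangle_s)$ is a true martingale, and $\langle M\rangle_s\leq K_\varphi^2(s-t_j)$. Hence
\begin{equation}
\E[e^{\lambda M_s}]\leq e^{\lambda^2K_\varphi^2(s-t_j)/2},
\end{equation}
so $M_s$ is subgaussian with $\|M_s\|_{\psi_{e,2}}\lesssim K_\varphi(s-t_j)^{1/2}$. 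The moment property recalled in the notation section then gives $\E[|M_s|^p]^{1/p}\lesssim\sqrt p K_\varphi (s-t_j)^{1/2}$. Alternatively, the time-change (Dambis–Dubins–Schwarz) representation gives the same bound with sharp Gaussian moments.

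Combining the two terms via Minkowski's inequality and inserting into the integral bound above completes the argument.
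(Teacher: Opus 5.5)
Your proof is correct and follows the same structure as the paper's proof. You split the increment into drift and stochastic integral, bound the drift by H\"older and the uniform moment bounds, and bound the stochastic integral with bounded integrand by $\sqrt{p}K_\varphi(s-t_j)^{1/2}$ in $L^p$; the only differences are that you use Minkowski's integral inequality where the paper uses Jensen, and an exponential-martingale subgaussian bound where the paper uses BDG.

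Your stated reason for avoiding BDG is mistaken, though. For \emph{continuous} martingales the BDG constant in $\E[|M_s|^p]^{1/p}\le c_p\,\E[\langle M\rangle_s^{p/2}]^{1/p}$ does grow like $\sqrt{p}$ (Barlow--Yor, Carlen--Kr\'ee), which is exactly the $p^{p/2}$ factor the paper uses. So your detour is a valid alternative, not a necessity.
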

\begin{proof}
By Jensen inequality,
\begin{align}
    \E\left[\left( \int_{t_j}^{t_{j+1}} |{X}_s^i - {X}_{t_j}^i|^2 ds\right)^{p/2} \right] \leq \dn^{p/2-1}\int_{t_j}^{t_{j+1}}\E \left[|{X}_s^i - {X}_{t_j}^i|^p \right] ds.
\end{align}
By definition,  
\begin{equation}
    \E\left[|{X}_s^i - {X}_{t_j}^i|^p\right] \leq 2^{p-1}\left(\E\left[\left|\int_{t_j}^s b({X}_u^i,\mu_u^N)du \right|^p\right] + \E\left[\left| \int_{t_j}^s (\varphi \star \mu_{u}^N)({X}_u^i)dW_u^i \right|^p\right]\right).
\end{equation}
Using $\mathcal{A}_1)$ and \eqref{eq:MomentsBoundforallP}, we obtain that 
\begin{equation}
    \E\left[\left|\int_{t_j}^s b({X}_u^i,\mu_u^N)du \right|^p\right] \leq \dn^{p-1}B^{p} \int_{t_j}^s\E[(1 + |{X}_u^i| + W_2(\mu_u^N,\delta_0))^p]du \lesssim \dn^p.
\end{equation}
On the other hand, using BDG inequality and $\mathcal{A}_2),$
\begin{equation}
    \E\left[\left| \int_{t_j}^s (\varphi \star \mu_{u}^N)({X}_u^i)dW_u^i \right|^p\right] \lesssim p^{p/2} \E\left[\left(\int_{t_j}^s  (\varphi \star \mu_{u}^N)^2(X_u^i)du\right)^{p/2} \right] \lesssim K_{\varphi}^p\dn^{p/2}p^{p/2}.
\end{equation}
Combining both results leads
\begin{align}
    \E\left[\left( \int_{t_j}^{t_{j+1}} |{X}_s^i - {X}_{t_j}^i|^2 ds\right)^{p/2} \right] \lesssim  \dn^{p/2-1}\int_{t_j}^{t_{j+1}}(\dn^p + K_{\varphi}^p\dn^{p/2}p^{p/2}) ds \lesssim K_{\varphi}^p\dn^p p^{p/2}.
\end{align}
\end{proof}
\noindent
The following technical lemma will be used in the proof of Theorem \ref{theoremFinalErrorL2}.
\begin{lemma}\label{Lemmalogcontrol}
Let $(a_n)_{n\geq 1}$ be a sequence of positive real numbers such that
\begin{equation}\label{eqlemma_assumption}
    a_n \leq n^{-w} + {|\log(a_n)|^q}{\log(n)^{-q}},
\end{equation}
for some constants $w,q>0$. Then there exists $n_0 \in \N $ such that for all $n\geq n_0$,
\begin{equation}\label{eqlemmaConclusion}
    a_n \lesssim \left(\frac{\log(n)  }{\log\log(n) }\right)^{-q}.
\end{equation}
\end{lemma}
\begin{comment}
\begin{proof}
First, we observe that, since $n \rightarrow\infty,$ the sequence $a_n$ must go to 0 in order to satisfy \eqref{eqlemma_assumption} for all $n \in \N$. Set $r>1$ and define 
\begin{equation}
    s_n = r\left(\frac{\log\log(n)}{\log(n)}\right)^q,
\end{equation}
Since 
\begin{equation}
    |\log(s_n)| \lesssim q\log\log(n) \hspace{0.5cm} \text{and}\hspace{0.5cm} n^{-w} = o\left(\left(\frac{\log(n)}{\log \log(n)}\right)^{-q}\right),
\end{equation}
for a sufficiently large $r$, there exists $n_0 \in \N$ such that $\forall n \geq n_0$ we have that  $a_n \leq s_n$. In particular, \eqref{eqlemmaConclusion} holds.
\end{proof}
\end{comment}
\noindent
A detailed proof of this lemma can be found in  \cite[Lemma 5.1]{bel24}.

\subsection{Remaining Proofs}
\subsubsection{Proof of Proposition \ref{PropositionInfimumApproximationError}}\label{proof:PropApproximationError}
\begin{proof}
Since $\varphi \in S(C,r_1,r_2)$, it follows that 
\begin{equation}\label{eqineq1}
    \inf_{f\in S(A,J,C,r_1,r_2)} \nstarn{f-\varphi}^2 \lesssim \nstarn{\sum_{j\geq J}\sum_{k\in \Z}c_{jk}\psi_{jk}}^2 + \nstarn{\sum_{j \geq-1}^{J-1}\sum_{\substack{k\in\Z\\ \operatorname{supp}(\psi_{jk})\not\subseteq [-A,A]}}c_{jk}\psi_{jk}}^2.
\end{equation}
We first estimate the first term on the right-hand side of \eqref{eqineq1}.
We observe that for all $j \geq J, k\in \Z$, since $\psi$ is compactly supported, 
\begin{equation}
    |(\psi_{jk} \star \mu_{t_l})(x)| \leq \int_{\R}2^{j/2}|\psi_{}(2^{j}(x-y) - k)\mu_{t_l}(y)|dy \lesssim 2^{-j/2}.
\end{equation}
Since $r_1>1$, we obtain 
\begin{align}\label{eqresult1}
    \nstarn{\sum_{j\geq J}\sum_{k\in \Z}c_{jk}\psi_{jk}}^2 \leq \frac{1}{n}\sum_{l=0}^{n-1}\int_{\R}\left(C\sum_{j \geq J} 2^{-j(r_2 + 1)}\sum_{k \in \Z}\frac{1}{(1 + |k|)^{r_1}}\right)^2\mu_{t_l}(x)dx \lesssim 2^{-2J(r_2 +1)}.
\end{align}
Next, let us denote
\begin{equation}
    g(x) := \sum_{j \geq-1}^{J-1}\sum_{\substack{k\in\Z\\ \operatorname{supp}(\psi_{jk})\not\subseteq [-A,A]}}c_{jk}\psi_{jk}(x).
\end{equation}
By construction,  supp$(g) \subset [-A+c_0,A-c_0]^{\mathcal{C}}$ for a fixed $c_0>0$. Applying the definition of the seminorm,  
\begin{equation}
    \nstarn{g}^2 = \frac{1}{n}\sum_{l=0}^{n-1}\int_{-A/2+c_0}^{A/2 - c_0}(g \star \mu_{t_l})^2(x)\mu_{t_l}(x)dx + \frac{1}{n}\sum_{l=0}^{n-1}\int_{[-A/2+c_0,A/2-c_0]^{\mathcal{C}}}(g \star \mu_{t_l})^2(x)\mu_{t_l}(x)dx.
\end{equation}
 First, from  Lemma \ref{LemmaSubgaussianDistribution}, we have that 
for any $x\in [-A/2+c_0, A/2-c_0]$,
\begin{align}
   |(g \star\mu_{t_l})(x)| \leq \int_{\R}|g(y)|\mu_{t_l}(x-y)dy \leq \|g\|_{\infty}g_2\int_{|z| > A/2}\phi_{\sigma_2}(z)dz.
\end{align}
Using the change of variables $t= z^2/(2\sigma_2^2)$ and  the approximation  $\Gamma(s,t) \sim t^{s-1}\exp(-t)$ for $t\rightarrow \infty$, where 
\begin{equation}
    \Gamma(s,t) = \int_{t}^\infty u^{s-1}\exp(-u)du,
\end{equation}
we obtain that for $x\in [-A/2+c_0, A/2-c_0]$, 
\begin{equation}
     |(g \star\mu_{t_l})(x)| \lesssim \frac{\|g\|_{\infty}}{A^{}}\exp\left(-\frac{A^2}{8\sigma_2^2}\right).
\end{equation}
Therefore, 
\begin{align}\label{eqineq2}
    \frac{1}{n}\sum_{l=0}^{n-1} \int_{-A/2+c_0}^{A/2-c_0}(g \star \mu_{t_l})^2(x)\mu_{t_l}(x)dx \lesssim K^2A^{-2}\exp(-A^2/(4\sigma_2^2)).
\end{align}
On the other hand, since $\|g\|_{\infty} \leq K$ and using Lemma $\ref{LemmaSubgaussianDistribution}$ below,
\begin{align}
     \frac{1}{n}\sum_{l=0}^{n-1} \int_{[-A/2+c_0,A/2-c_0]^{\mathcal{C}}}((g \star \mu_{t_l})(x))^2\mu_{t_l}(x)dx &\lesssim K^2g_2 \int_{|x|> A/2} \phi_{\sigma_2}(x)dx \nonumber\\ \label{eqineq3}
     &\lesssim K^2 A^{-1}\exp(-A^2/(8\sigma_2^2)).
\end{align}
Combining \eqref{eqineq2} and \eqref{eqineq3}, for $A$ sufficiently large, we obtain
\begin{equation}\label{eqresult2}
    \nstarn{g}^2 \lesssim K^2 A^{-1}\exp(-A^2/(8\sigma_2^2)).
\end{equation}
From \eqref{eqresult1} and \eqref{eqresult2}, we prove the desired bound.
\end{proof}

\subsubsection{Proof of Proposition \ref{PropositionRelationL2NStarn}}\label{proof:PropositionRelationL2NStarn}
\begin{proof}
    From the definition of the $\nstarn{.}$ norm, for $R_1>0,$ we have that 
    \begin{align}
        \nstarn{f}^2 &= \frac{1}{n}\sum_{j=0}^{n-1}\int_{\R} (f \star \mu_{t_j})(x)^2\mu_{t_j}(x)dx \geq \frac{1}{n}\sum_{j=0}^{n-1}\int_{-R_1}^{R_1} (f \star \mu_{t_j})^2(x)\mu_{t_j}(x)dx \\
        &\geq \frac{1}{n}\sum_{j=0}^{n-1} m_{R_1}^{t_j}\|f \star \mu_{t_j}\|_{L^2([-R_1,R_1])}^2 =  \frac{1}{n}\sum_{j=0}^{n-1} m_{R_1}^{t_j} \left(\|f \star \mu_{t_j}\|_{L^2}^2 - \|f \star \mu_{t_j}\|_{L^2([-R_1,R_1]^{\mathcal{C}})}^2\right).
    \end{align}
    For each $t_j$, applying Plancherel’s  identity, 
    \begin{align}\label{s1}
       \|f \star \mu_{t_j}\|_{L^2}^2 &= \int_{\R} |\widehat{f}(x)|^2 |\widehat{\mu}_{t_j}(x)|^2dx \geq \widehat{
       m}^{t_j}_{R_2} \left( \int_{\R} |\widehat{f}(x)|^2 dx - \int_{[-R_2,R_2]^{\mathcal{C}}} |\widehat{f}(x)|^2 dx \right)\\
       &\geq\widehat{
       m}^{t_j}_{R_2} \left( \|f\|_{L^2}^2 - \|\widehat{f} \|^2_{L^2([-R_2,R_2]^{\mathcal{C}})}   \right).
    \end{align}
    Therefore, 
    \begin{align}
        \nstarn{f}^2 \geq \frac{1}{n}\sum_{j=0}^{n-1}m_{R_1}^{t_j}\widehat{
       m}^{t_j}_{R_2}\|f\|_{L^2}^2  - \frac{1}{n}\sum_{j=0}^{n-1}m_{R_1}^{t_j}\widehat{
       m}^{t_j}_{R_2}\|\widehat{f} \|^2_{L^2([-R_2,R_2]^{\mathcal{C}})} \\-\frac{1}{n}\sum_{j=0}^{n-1}m_{R_1}^{t_j}\|f \star \mu_{t_j}\|_{L^2([-R_1,R_1]^{\mathcal{C}})}^2.
    \end{align}
    Rearranging terms completes the proof.
\end{proof}

\subsubsection{Proof of Corollary \ref{CorollaryL2Bound}.}\label{proof:CorollaryL2Bound}

\begin{proof}
    First, we point out that under \cref{AssumptionProcessII} and \cref{AssumptionInitialDistribution}, Lemma \ref{LemmaSubgaussianDistribution} holds. Therefore, for all $t_j$, we have that 
    \begin{equation}
        \inf_{x\in [-R_1,R_1]}\mu_{t_j}(x) \geq g_1(2\pi\sigma_1^2)^{-1/2}\exp(-R_1^2(2\sigma_1^2)^{-1}). 
    \end{equation}
We next study the analytic continuation of the Fourier
transform of $\mu_{t_j}$ to the complex plane. For $z =u + iv \in \C, $ we have that 
    \begin{align}
        |\hat{\mu}_{t_j}(z)| &= \left| \int_{\R} \exp(-izx)\mu_{t_j}(x)dx\right| \leq g_2\int_{\R}\exp(vx)\phi_{\sigma_2}(x)dx 
        \leq g_2\exp(\sigma_2^2|z|^2/2).
    \end{align}
    Hence, we can extend $\hat{\mu}_{t_j}$ to an entire function on $\C$ (by dominated convergence on compact subsets of $\mathbb{C}$). Moreover, its maximum modulus $M_{\hat{\mu}_{t_j}}(r):= \sup_{\{|z| = r\}} |\hat{\mu}_{t_j}(z)|$ satisfies 
    \begin{equation}\label{q1}
        \log(M_{\hat{\mu}_{t_j}}(r)) \leq \log(g_2) + \frac{\sigma_2^2}{2}r^2.
    \end{equation}
    %In particular, the order of $\widehat{\mu}_{t_j}$ satisfies $\rho(\widehat{\mu}_{t_j}) = 2$ and its type $\tau(\widehat{\mu}_{t_j}) \leq \sigma_2^2/2.$
    By a Cartan-type estimate (Theorem 4 Section 11.3 in \cite{Lev96}), 
%The Theorem:
\begin{comment}
Let $f$ be analytic in the disk
\[
B_{2eR}^{\mathbb C}=\{z\in\mathbb C:\ |z|<2eR\},
\]
and assume that $|f(0)|=1$. Then, for every $\eta\in(0,1)$,
\[
\log|f(z)|
\ge
-H(\eta)\log M_f(2eR),
\]
for all
\[
z\in B_R^{\mathbb C}\setminus E_{\eta,R}^{\mathbb C},
\]
where
\[
H(\eta)=\log\left(\frac{15e^3}{\eta}\right),
\]
and $E_{\eta,R}^{\mathbb C}$ is a union of disks of radii $r_i$
such that
\[
\sum_i r_i\le \eta R.
\]
\end{comment}  
    given $R_2>0,$ there exists a constant $C>0$ such that for every $\eta \in (0,1)$ we have that 
    \begin{equation}\label{q2}
        \log(|\hat{\mu}_{t_j}(z)|) \geq -C H(\eta) \log(M_{\hat{\mu}_{t_j}}({2eR_2})) \hspace{0.3cm}\forall z \in B_{R_2}^{\C} \backslash E_{\eta, R_2}^{\C}
    \end{equation}
    where $E_{\eta, R_2}^{\C}$ is a union of complex disks of radius $r_i$ inside $B_{R_2}^{\C}$ satisfying $\sum_{i}r_i \leq \eta R_2$ and $H(\eta) = \log(15e^3 \eta^{-1})$. Combining \eqref{q1} and \eqref{q2}, on $B_{R_2}^{\C} \backslash E_{\eta, R_2}^{\C}$, we have that 
    \begin{equation}\label{q3}
        |\hat{\mu}_{t_j}(z)| \geq \exp\left(-C H(\eta)e^2\sigma_2^2R_2^2\right).
    \end{equation}
    By a standard projection argument, we define $E_{\eta,R_2} =  E_{\eta, R_2}^{\C} \cap   \R$ satisfying $|E_{\eta,R_2}| \leq 2\eta R_2$. Then,  \eqref{q3} holds in $[-R_2,R_2] \backslash E_{\eta, R_2}$. Consequently, 
    \begin{align}
        \int_{-R_2}^{R_2} |\hat{f}(z)|^2 |\hat{\mu}_{t_j}(z)|^2 dz &\geq \int_{[-R_2,R_2] \backslash E_{\eta, R_2}} |\hat{f}(z)|^2 |\hat{\mu}_{t_j}(z)|^2 dz \nonumber \\
        &\geq \exp\left(-2C H(\eta)e^2\sigma_2^2R_2^2\right)\int_{[-R_2,R_2] \backslash E_{\eta, R_2}}|\hat{f}(z)|^2 dz. \label{w1}
    \end{align}
    Defining $r_{\eta,R_2}:= (\int_{E_{\eta,R_2}}|\hat{f}(z)|^2 dz)/(\int_{\R}|\hat{f}(z)|^2 dz)$, we  can express
    \begin{align}
        \int_{[-R_2,R_2] \backslash E_{\eta, R_2}}|\hat{f}(z)|^2 dz &= \left( \int_{\R \backslash E_{\eta, R_2}}|\hat{f}(z)|^2 dz - \int_{[-R_2,R_2]^{\mathcal{C}}} |\hat{f}(z)|^2 dz\right) \nonumber\\
        &=(1 - r_{\eta,R_2})\int_{\R} |\hat{f}(z)|^2 dz - \int_{[-R_2,R_2]^{\mathcal{C}}} |\hat{f}(z)|^2 dz \label{w2}.
    \end{align}
    Let us  control $r_{\eta,R_2}$. By the Sobolev embedding, 
    \begin{align}
        \int_{E_{\eta,R_2}}|\hat{f}(z)|^2 dz \leq |E_{\eta,R_2}|\|\hat{f}\|_{\infty}^2 \leq 2\eta R_2 Q_1\|\hat{f}\|_{H^1}^2 
    \end{align}
    where 
    \begin{equation}
        Q_1 := \frac{1}{2\pi}\int_{\R} (1 + |z|^2)^{-1}dz = \frac{1}{2}, \hspace{0.5cm} \|\hat{f}\|_{H^1}^2 := 2\pi \int_{\R}(1 + |z|^2)|f(z)|^2dz.
    \end{equation}
    Therefore, since \eqref{assumptionSobolev} holds and choosing $\eta = (2R_2A(f))^{-1}$, we obtain that 
    \begin{equation}\label{w3}
        r_{\eta,R_2} \leq \frac{\eta R_2  \|\hat{f}\|_{H^1}^2}{\int_{\R}|\hat{f}(z)|^2 dz} \leq 1/2.
    \end{equation}
    Combining \eqref{w1}, \eqref{w2} and \eqref{w3}, we have 
    \begin{equation}
         \int_{-R_2}^{R_2} |\hat{f}(z)|^2 |\hat{\mu}_{t_j}(z)|^2 dz > \hat{m}_{R_2}\left(  \int_{\R} |\hat{f}(z)|^2dz - \int_{[-R_2,R_2]^{\mathcal{C}}}  |\hat{f}(z)|^2 dz\right)
    \end{equation}
    with $\hat{m}_{R_2} =g_2 \exp\left(-2Ce^2\sigma_2^2R_2^2\log(30e^3R_2A(f))\right)/2.$
Then, following Proposition \ref{PropositionRelationL2NStarn}, we obtain  
\begin{equation}\label{ww}
       \nstarn{f}^2 \geq m_{R_1}\hat{m}_{R_2}\|f\|_{L^2}^2  - m_{R_1}\hat{
       m}_{R_2}\|\widehat{f} \|_{L^2([-R_2,R_2]^{\mathcal{C}})}^2 - m_{R_1}\frac{1}{n}\sum_{j=0}^{n-1}\|f \star \mu_{t_j}\|_{L^2([-R_1,R_1]^{\mathcal{C}})}^2.
\end{equation}
From the last inequality,  we obtain the desired result.
\end{proof}

\subsubsection{Proof of Theorem \ref{theoremFinalErrorL2}.}\label{prooftheoremFinalErrorL2}
\begin{proof}
 Let us consider $f\in S(C,r_1,r_2)$ and $f_A \in S(A_\eta,J_\eta,C,r_1,r_2)$ denote the truncation of $f$ to this subspace. From Lemma \ref{lemmaInclusionApproximationSpaces}, we note that both $f,f_A \in \mathcal{A}(K,r_1,r_2)$ for a value $K>0$.   
By the triangle inequality, we have that 
\begin{equation}\label{eqineqL2}
    \|f\|_{L^2}^2 \lesssim \|f-f_A\|_{L^2}^2 + \|f_A\|_{L^2}^2.
\end{equation}
Since supp$(f-f_A) \subset  [-A_\eta + c_0,A_\eta-c_0]^{\mathcal{C}}$ for a fixed $c_0 > 0$ and the tails of $f-f_A$ have polynomial decay since $f-f_A \in \mathcal{A}(K,r_1,r_2)$, we have that 
\begin{equation}\label{eqb00}
    \|f-f_A\|_{L^2}^2 \lesssim 2K^2 \int_{|x| > A_\eta-c_0}|x|^{-2r_1}dx \lesssim K^2A_\eta^{-2r_1 +1}.
\end{equation}
Next, we proceed to control the second term of the right-hand side of \eqref{eqineqL2} by using  Corollary \ref{CorollaryL2Bound}. 
For $b_1, b_2 > 0 $,  we define
\begin{equation}
    R_{1,\eta} : = \sqrt{2b_1\log(\eta^{-1})}, \hspace{1cm} R_{2, \eta}:=\sqrt{\frac{2b_2\log(\eta^{-1})}{\log\log(\eta^{-1}) + |\log(\|f_A\|_{L^2}^2)|}},
\end{equation}
and $A_{\eta}= \sqrt{2\beta \log(\eta^{-1})}$ for  $\beta>0.$ 
To control $\|f_A\|_{L^2}^2$ using Corollary \ref{CorollaryL2Bound}, we need to study the order of the terms in the right-hand side of  inequality \eqref{equationL2bound}. First, since $\widehat{f}_A$ has polynomial decay,
\begin{equation}\label{eqb0}
    \int_{|x| > R_{2,\eta}} \widehat{f}_A(x)^2dx \leq K^2  \int_{|x| > R_{2,\eta}} |x|^{-r_2}dx \lesssim K^2 R_{2,\eta}^{-2r_2 + 1}.
\end{equation}
Next, let us fix $r \in (0,1)$. Since $A_\eta = \sqrt{2\beta \log(\eta^{-1})}$, for $\beta$ small enough, we can ensure that $A_\eta < rR_{1,\eta}$. Therefore, 
\begin{align}
    \int_{|x| > R_{1,\eta}}(f_A \star \mu_{t_j})(x)^2dx &\leq \int_{|y| \leq rR_{1,\eta}} f_A(y)^2 \int_{|x| > R_{1,\eta}} \mu_{t_j}(x-y)dxdy \\
    &\lesssim K^2 R_{1,\eta} \int_{|z| > (1 -r)R_{1,\eta}} \mu_{t_j}(z)dz
    \\
    &\lesssim K^2R_{1,\eta}\phi_{\sigma_2}((1-r)R_{1,\eta}) \lesssim_{\log(\eta^{-1})} \eta^{\frac{(1 - r)^2b_1}{\sigma_2^2}}\label{eqb1},
\end{align}
where $\alpha_1\lesssim_{\log(\eta^{-1})} \alpha_2$ means that $\alpha_1 \leq C(\log(\eta^{-1}))^q \alpha_2$ for $C,q>0$ independent of $\eta$.
Again, since $f_A \in S(A_\eta,J_\eta,C,r_1,r_2)$,  for a constant  $Q > 0$, it follows that 
\begin{equation}\label{eqb2}
    m_{R_1}^{-1} \lesssim \eta^{-( b_1/\sigma_1^2)} ,\hspace{0.4cm} \widehat{m}_{R_2}^{-1} \lesssim \eta^{-2b_2 Q   \sigma_2^2}.
\end{equation}
Let us assume that for $a > 0$ specified later,
\begin{equation}\label{eqassumptiona}
    \nstarn{f_A}^2 \lesssim \eta^{a}.
\end{equation}
Then,  combining \eqref{eqb0}, \eqref{eqb1} and \eqref{eqb2}, from \eqref{equationL2bound} we obtain that 
\begin{equation}
   \|f_A\|_{L^2}^2 \lesssim_{\log(\eta^{-1})}  \eta^{-\left(\frac{b_1}{\sigma_1^2} + 2b_2\sigma_2^2C\right)} \left(\eta^{a}   +  \eta^{\left(\frac{b_1}{\sigma_1^2} + \frac{b_1(1-r)^2}{\sigma_2^2}\right)} \right) + R_{2,\eta}^{-2r_2 + 1}.
\end{equation}
\begin{comment}
We note that, since $\sigma_2^2 < 2\sigma_1^2$, 
\begin{equation}
    \frac{1}{\sigma_1^2} < \frac{1 + (1-r)^2}{\sigma_2^2} 
\end{equation} 
if $r > (\sigma_2^2 - \sigma_1^2)/{\sigma_1^2}$, which is satisfied  since  $(\sigma_2^2 - \sigma_1^2)/\sigma_1^2 \in (0,1)$.  
\end{comment}
Therefore, choosing $b_1,b_2$ small enough, there exist $c>0 $ such that
\begin{equation}
     \|f_A\|_{L^2}^2 \lesssim \eta^{c} + \left(\frac{2\log(\eta^{-1})}{|\log(||f_A||_{L^2}^2)|}\right)^{-r_2 + \frac{1}{2}}
\end{equation}
holds. From  Lemma \ref{Lemmalogcontrol}, we obtain 
\begin{equation}\label{eqb3}
    \|f_A\|_{L^2}^2 \lesssim \left(\frac{2\log(\eta^{-1})}{\log\log(\eta^{-1})}\right)^{-r_2 + \frac{1}{2}}.
\end{equation}
Once $\|f_A\|_{L^2}^2$ is controlled under condition \eqref{eqassumptiona}, we can remove this assumption by splitting the expectation according to the event $\{\nstarn{f_A}^2 \leq \eta^{a}\}$ and its complement as follows
\begin{equation}
    \E \left[\|f\|_{L^2}^2\right] = \E\left[\|f\|_{L^2}^2 \left(\one_{\{\nstarn{f_A}^2 \leq \eta^{a}\} }   +   \one_{\{\nstarn{f_A}^2 > \eta^{a}\} }\right)\right].
\end{equation}
From \eqref{eqb00} and \eqref{eqb3}, we have that 
\begin{equation}
    \E\left[\|f\|_{L^2}^2 \one_{\{\nstarn{f_A}^2 \leq \eta^{a}\} } \right] \lesssim \log(\eta^{-1})^{-r_1 + \frac{1}{2}} + \left(\frac{\log(\eta^{-1})}{\log\log(\eta^{-1})}\right)^{-r_2 + \frac{1}{2}}.
\end{equation}
On the other hand, since $\|f_A\|_{L^2}^2$ is bounded, using Markov's inequality,
\begin{align}
     \E\left[\|f\|_{L^2}^2 \one_{\{\nstarn{f_A}^2 > \eta^{a}\} } \right] &\lesssim \log(\eta^{-1})^{-r_1 + \frac{1}{2}} + \E\left[ \|f_A\|_{L^2}^2  \one_{\{\nstarn{f_A}^2 > \eta^{a}\} }\right] \\
     &\lesssim \log(\eta^{-1})^{-r_1 + \frac{1}{2}} + \eta^{-a}\E\left[\nstarn{f_A}^2 \right].
\end{align}
Choosing $f = \widehat{\varphi}_N - \varphi$  and $f_A = \widehat{\varphi}_N-\varphi_A$,  
\begin{align}\label{eqlastterm}
     \E \left[\|\varphi - \widehat{\varphi}_N\|_{L^2}^2\right] &\lesssim \log(\eta^{-1})^{-r_1 + \frac{1}{2}} + \left(\frac{\log(\eta^{-1})}{\log\log(\eta^{-1})}\right)^{-r_2 + \frac{1}{2}} + \eta^{-a}\E\left[\nstarn{\varphi_A - \widehat{\varphi}_N}^2 \right].
\end{align}
Finally, using the triangle inequality combined with  Proposition \ref{PropositionInfimumApproximationError} and Corollary \ref{CorollaryRateLpMCV}, we conclude that for a sufficiently small value of $a>0$, the last term  in \eqref{eqlastterm} decays polynomially in $\eta^{}$. Finally, under the two different scenarios, we prove the convergence rate for the established regimes.
\end{proof}

\subsubsection*{Acknowledgment} 

Francisco Pina's research is funded by the PRIDE Grant “MATHCODA: Mathematical Tools for Complex Data Structures”. 

\bibliography{bibliography}

\end{document}